\documentclass[12pt,diff_eng]{article} 
\usepackage[english]{babel}
\usepackage{graphicx}
\usepackage{amsmath}
\usepackage{amssymb}
\usepackage{amsthm}
\usepackage{textcomp}
\usepackage{hyperref}
\usepackage{chngcntr}
\usepackage{apptools}
\usepackage{verbatim}
\usepackage[T2A]{fontenc}
\usepackage[symbol]{footmisc}
\usepackage{footnote}

\usepackage{diff_eng}
\setlength{\textwidth}{170mm}          
\setlength{\textheight}{240mm}         
\setlength{\topmargin}{-20mm }         
\setlength{\evensidemargin}{0mm}       
\setlength{\oddsidemargin}{0mm}        
                                       
\setlength{\parskip}{4pt}              
\setlength{\parindent}{2em}            
\setlength{\arrayrulewidth}{0.8pt}     
\setlength{\doublerulesep}{6pt}

\tolerance=10000
\hbadness=10000
\vbadness=10000


\setcounter{page}{1}                   
\AtAppendix{\counterwithin{theorem}{section}}
\AtAppendix{\counterwithin{proposition}{section}}

\newtheorem{theorem}{Theorem}

\newtheorem{lemma}{Lemma}

\theoremstyle{definition}
\newtheorem{problem}{Problem}

\theoremstyle{remark}
\newtheorem{remark}{Remark}
\newtheorem{corollary}{Corollary}

\numberwithin{equation}{section}
\numberwithin{lemma}{section}

\newcommand\restr[2]{{
		\left.\kern-\nulldelimiterspace 
		#1 
		\vphantom{\big|} 
		\right|_{#2} 
}}

\begin{document}
  \thispagestyle{empty}

\parbox{0.275\textwidth }{
\begin{picture}(130,120)
\put(67,67){{\Huge $\frac{dx}{dt}$}}
\put(40,0){\vector(0,1){70}}
\put(40,70){\line(0,1){10}}
\qbezier(40,80)(40,110)(80,110)
\qbezier(80,110)(120,110)(120,80)
\put(120,80){\line(0,-1){10}}
\qbezier(120,70)(120,35)(75,35)
\put(75,35){\vector(-1,0){5}}
\put(0,35){\line(1,0){70}}
\put(0,40){\vector(1,0){10}}
\qbezier(10,40)(35,40)(35,70)
\put(35,70){\line(0,1){10}}
\qbezier(35,80)(35,115)(80,115)
\qbezier(80,115)(125,115)(125,80)
\put(125,80){\line(0,-1){10}}
\qbezier(125,70)(125,30)(75,30)
\put(75,30){\line(-1,0){15}}
\qbezier(60,30)(45,30)(45,15)
\put(45,15){\vector(0,-1){15}}
\end{picture}
}
\hfill
\noindent \parbox{0.45\textwidth }{\footnotesize \it
\begin{center}
DIFFERENTIAL EQUATIONS\\ 
AND\\
CONTROL PROCESSES

\noindent N. ?, 2022\\

\noindent Electronic Journal, \\
reg. N ${\Phi}$C77-39410 at 15.04.2010

\noindent ISSN 1817-2172
\medskip

\noindent http://diffjournal.spbu.ru/\\

\noindent e-mail: jodiff@mail.ru

\end{center}}
\vspace*{10mm}
\large

\rightline{\underline{\it Delay differential equations}}

\bigskip

  \date{}
  
  \bigskip
  \noindent\parbox{\linewidth}{\centering\LARGE{\textbf{Nonlinear semigroups for delay equations in Hilbert spaces, inertial manifolds and dimension estimates} }}
  
  \bigskip
  
  \noindent\parbox{\linewidth}{
  	\centering
  	{\large Mikhail Anikushin\footnote[1]{demolishka@yandex.ru.}}
  	\\ \vspace{0.2cm} \normalsize\text{Department of
  		Applied Cybernetics, Faculty of Mathematics and Mechanics,}
  	\\ \vspace{0.2cm} \normalsize\text{Saint Petersburg State University.}
  }
  
  \bigskip
  
  \noindent\textbf{Abstract.}
We study the well-posedness of nonautonomous nonlinear delay equations in $\mathbb{R}^{n}$ as evolutionary equations in a proper Hilbert space. We present a construction of solving operators (nonautonomous case) or nonlinear semigroups (autonomous case) for a large class of such equations. The main idea can be easily extended for certain PDEs with delay. Our approach has lesser limitations and much more elementary than some previously known constructions of such semigroups and solving operators based on the theory of accretive operators. In the autonomous case we also study differentiability properties of these semigroups in order to apply various dimension estimates using the Hilbert space geometry. However, obtaining effective dimension estimates for delay equations is a nontrivial problem and we explain it by means of a scalar delay equation. We also discuss our adjacent results concerned with inertial manifolds and their construction for delay equations.
  \bigskip
  
  \noindent\textbf{Keywords:} Delay equations, Nonlinear semigroups, Inertial manifolds, Dimension estimates.
  \newpage
  
\section{Introduction}

In this paper we consider the following class of nonlinear nonautonomous delay differential equations in $\mathbb{R}^{n}$:
\begin{equation}
\label{EQ: ClassicalDelayEquation}
\dot{x}(t) = \widetilde{A}x_{t} + \widetilde{B}F(t,\widetilde{C}x_{t}),
\end{equation}
where $x_{t}(\theta) := x(t+\theta)$, $\theta \in [-\tau,0]$, denotes the history segment; $\tau>0$ is a constant; $\widetilde{A} \colon C([-\tau,0];\mathbb{R}^{n}) \to \mathbb{R}^{n}$, $\widetilde{B} \colon \mathbb{R}^{m} \to \mathbb{R}^{n}$ and $\widetilde{C} \colon C([-\tau,0];\mathbb{R}^{n}) \to \mathbb{R}^{r}$ are bounded linear operators and $F \colon \mathbb{R} \times \mathbb{R}^{r} \to \mathbb{R}^{m}$ is a nonlinear continuous function such that for some constant $\Lambda=\Lambda(t) > 0$, which is bounded in $t$ from compact intervals, we have
\begin{equation}
\label{EQ: DelayLipschitz}
|F(t,y_{1}) - F(t,y_{2})| \leq \Lambda(t) |y_{1}-y_{2}| \text{ for all } y_{1},y_{2} \in \mathbb{R}^{r}, t \in \mathbb{R}.
\end{equation}
Here and below by $|\cdot|$ we denote the Euclidean norm in $\mathbb{R}^{j}$ for any $j>0$. For $t \geq s$ we put $\Lambda_{s}^{t} := \sup_{s \leq \theta \leq t}\Lambda(\theta)$.

From the classical theory (that is the application of the Banach fixed point theorem), it follows that for any $\phi_{0} \in C([-\tau,0];\mathbb{R}^{n})$ and $t_{0} \in \mathbb{R}$ there exists a unique classical solution $x(\cdot)=x(\cdot,t_{0},\phi_{0}) \colon [t_{0}-\tau,+\infty) \to \mathbb{R}^{n}$, i.~e. such that $x_{t_{0}} \equiv \phi_{0}$, $x(\cdot) \in C^{1}([t_{0},+\infty);\mathbb{R}^{n})$ and $x(\cdot)$ satisfies \eqref{EQ: ClassicalDelayEquation} for $t \geq t_{0}$. We define the family of solving operators $\widetilde{U}(t,s) \colon C([-\tau,0];\mathbb{R}^{n}) \to C([-\tau,0];\mathbb{R}^{n})$, where $t \geq s$, by $\widetilde{U}(t,s)\phi_{0}:=x_{t}(\cdot,s,\phi_{0})$, where $x_{t}(\theta,s,\phi_{0}) = x(t+\theta,s,\phi_{0})$ for $\theta \in [-\tau,0]$.

Consider the Hilbert space $\mathbb{H} := \mathbb{R}^{n} \times L_{2}(-\tau,0;\mathbb{R}^{n})$ with the usual product norm, which we denote by $|\cdot|_{\mathbb{H}}$, i.~e. for $(x,\phi) \in \mathbb{H}$ we have
\begin{equation}
	|(x,\phi)|^{2}_{\mathbb{H}} = |x|^{2} + \int_{-\tau}^{0}|\phi(\theta)|^{2}d\theta.
\end{equation}
Consider the operator $A \colon \mathcal{D}(A) \subset \mathbb{H} \to \mathbb{H}$ given by
\begin{equation}
\label{EQ: OperatorADefinition}
(x, \phi)
\overset{A}{\mapsto}
\left(\widetilde{A}\phi, \frac{d}{d \theta} \phi\right),
\end{equation}
where $(x,\phi) \in \mathcal{D}(A) := \{ (x,\phi) \in \mathbb{H} \ | \ \phi(0)=x, \phi \in W^{1,2}(-\tau,0;\mathbb{R}^{n})  \}$. From the monograph of A.~B\'{a}tkai and S.~Piazzera \cite{BatkaiPiazzera2005} we have that $A$ is a generator of a $C_{0}$-semigroup in $\mathbb{H}$ (see Lemma \ref{LEM: DelayODEGeneratorLemma} below). The bounded linear operator $B \colon \mathbb{R}^{m} \to \mathbb{H}$ is defined as $B \xi := (\widetilde{B}\xi,0)$ and we define the unbounded linear operator $C \colon \mathbb{H} \to \mathbb{R}^{r}$ as $C(x,\phi):=\widetilde{C}\phi$ for $\phi \in C([-\tau,0];\mathbb{R}^{n})$. Now \eqref{EQ: ClassicalDelayEquation} can be written as an abstract evolution equation in $\mathbb{H}$:
\begin{equation}
\label{EQ: AbstractDelayHilberSpace}
\dot{v}(t) = Av(t) + BF(t,Cv(t)),
\end{equation}
for which we will study the question of well-posedness. In our Theorem \ref{TH: DelaySemigroupTh} below we precisely state in what sense solutions to this evolution equation can be understood.

We put $\mathbb{E}:=C([-\tau,0];\mathbb{R}^{n})$ and consider the embedding $\mathbb{E} \subset \mathbb{H}$ given by $\phi \mapsto (\phi(0),\phi)$. We identify the elements of $\mathbb{E}$ and $\mathbb{H}$ under such an embedding. For any $T>0$ let $\mathcal{H}_{T}$ be the set of all continuous functions $v \colon [0,T] \to \mathbb{H}$ for which there exists a continuous function $x \colon [-\tau,T] \to \mathbb{R}^{n}$ such that $v(t) = (x(t),x_{t})$ for all $t \geq 0$. We will also make the use of the following property, which is, in fact, the main ingredient of our proofs in the first part.
\begin{description}
	\item[\textbf{(MES)}] There is a constant $M_{C}>0$ such that for all $T>0$ the inequality
	\begin{equation}
	\int_{0}^{T}|Cv(t)| dt \leq M_{C} \left( | v(0) |_{\mathbb{H}} + \|v(\cdot)\|_{L_{1}(0,T;\mathbb{H})} \right).
	\end{equation}
	is satisfied for all $v(\cdot) \in \mathcal{H}_{T}$.
\end{description}
To explain it, consider, for example, the case of $n=1$ and $r=1$. Let $v(t)=(x(t),x_{t})$ and $\widetilde{C}\phi:=\phi(-\tau)$. Then we have (for convenience, let $T \geq \tau$)
\begin{equation}
\begin{split}
\int_{0}^{T}|Cv(t)|dt = \int_{0}^{T}|x(t-\tau)|dt = \int_{-\tau}^{0}|x(t)|dt + \int_{0}^{T-\tau}|x(t)|dt \leq\\
\leq 
(1+\sqrt{\tau})\left( |v(0)|_{\mathbb{H}} + \|v(\cdot)\|_{L_{1}(0,T;\mathbb{H})} \right).
\end{split}
\end{equation}
In fact, we can use the density of $\delta$-functionals in the weak-* topology of $C([-\tau,0];\mathbb{R})$ and approximations by such functionals given by the Riesz representation theorem to show that \textbf{(MES)} holds for any operator $\widetilde{C}$ (with similar arguments in the case $n \geq 1$, $r \geq 1$, see Lemma \ref{LEM: DelayMeasuEstimate} below). This simple observation besides the results contained in the present paper also led to a version of the Frequency Theorem for delay equations \cite{Anikushin2020Freq}. The $L_{2}$-version of \textbf{(MES)} helps to prove differentiability properties of constructed semigroups (see Section \ref{SEC: DelayDifferentiability}).

One of our main results is the following.
\begin{theorem}
	\label{TH: DelaySemigroupTh}
	Suppose for \eqref{EQ: ClassicalDelayEquation} that $F$ satisfies \eqref{EQ: DelayLipschitz}. Then for any $t_{0} \in \mathbb{R}$ and $v_{0} \in \mathbb{H}$ there is a unique generalized solution $v(t)=v(t,t_{0},v_{0})$ to \eqref{EQ: AbstractDelayHilberSpace}, which is a continuous function $[t_{0},+\infty) \to \mathbb{H}$. This solution is uniquely determined by the property that the family of solving operators $U(t,s)v_{0}:=v(t,s,v_{0})$, $t \geq s$, in $\mathbb{H}$ agrees with the family $\widetilde{U}(t,s)$ on $\mathbb{E}$. Moreover, the following properties are satisfied
	\begin{description}
		\item[\textbf{(ULIP)}] For any $T>0$ and $s \in \mathbb{R}$ there is a constant $M_{1}=M_{1}(T,\Lambda_{s}^{s+T})>0$ such that for all $t \in [s,s+T]$ and $v_{1},v_{2} \in \mathbb{H}$ we have
		\begin{equation}
		|U(t,s)v_{1} - U(t,s)v_{2}|_{\mathbb{H}} \leq M_{1} |v_{1}-v_{2}|_{\mathbb{H}}.
		\end{equation}
	    Moreover, for all $t \geq s + \tau$ we have $U(t,s)\mathbb{H} \subset \mathbb{E}$ and for any $T \geq \tau$, $s \in \mathbb{R}$ and $t \in [s+\tau,s+T]$ we have
		\begin{equation}
		\|U(t,s)v_{1} - U(t,s)v_{2}\|_{\mathbb{E}} \leq M_{1} |v_{1}-v_{2}|_{\mathbb{H}}.
		\end{equation}
		\item[\textbf{(COM)}] The map $U(t,s) \colon \mathbb{E} \to \mathbb{E}$ is compact for $t \geq s + \tau$ and the map $U(t,s) \colon \mathbb{H} \to \mathbb{H}$ is compact for $t \geq s+ 2 \tau$.
		\item[(\textbf{REG})] For $v_{0} \in \mathcal{D}(A)$ the generalized solution $v(t)=v(t,t_{0},v_{0})$, $t \geq t_{0}$, is a classical solution to \eqref{EQ: AbstractDelayHilberSpace}, i.~e. we have $v(\cdot) \in C^{1}([t_{0},+\infty);\mathbb{H}) \cap C([t_{0},+\infty);\mathbb{E})$, $v(t) \in \mathcal{D}(A)$ and $v(t)$ satisfies \eqref{EQ: AbstractDelayHilberSpace} for all $t \geq t_{0}$.
		\item[\textbf{(VAR)}] For all $v_{0} \in \mathbb{E}$ and $t_{0} \in \mathbb{R}$ we have the variation of constants formula satisfied for $v(t)=v(t;t_{0},v_{0})$ and $t \geq t_{0}$ as
		\begin{equation}
			\label{EQ: VOCformulaDelay}
			v(t)=G(t-t_{0})v_{0}+\int_{t_{0}}^{t}G(t-s)BF(s,Cv(s))ds,
		\end{equation}
	    where $G(t)$ is the $C_{0}$-semigroup generated by $A$ in $\mathbb{H}$.
	\end{description}
\end{theorem}
\begin{remark}
	One can interpret the variation of constants formula from \eqref{EQ: VOCformulaDelay} for any $v_{0} \in \mathbb{H}$. See Remark \ref{REM: VariationOfConstantsDelay} for this.
\end{remark}
\noindent The proof of Theorem \ref{TH: DelaySemigroupTh} is given in the next section. Note that $L_{p}$-versions of \textbf{(MES)} (see Lemma \ref{LEM: DelayMeasuEstimate} below) can be used to show the well-posedness in $\mathbb{R}^{n} \times L_{p}(-\tau,0;\mathbb{R}^{n})$.
\begin{remark}
	\label{REM: UniformConstant}
	For our further investigations it is important that if the constant $\Lambda(t)$ from \eqref{EQ: DelayLipschitz} can be chosen independently of $t$, then the constants from \textbf{(ULIP)} depend only on $T$ (or $t-s$), but not on $s$.
\end{remark}

Our approach for Theorem \ref{TH: DelaySemigroupTh} is very simple and it is based on three steps. The first one is the well-posedness of the linear problem in $\mathbb{H}$, including exponential estimates and the variation of constants formula. Such results for (partial) delay equations are given in \cite{BatkaiPiazzera2005}. The second step is the existence of classical solutions (in the space of continuous functions $\mathbb{E}$) for the nonlinear problem, which give rise to classical solutions of the non-linear problem in $\mathbb{H}$. The existence of solutions in $\mathbb{E}$ is well-known for delay equations in $\mathbb{R}^{n}$ (see, for example, J.K.~Hale \cite{Hale1977}) as well as for some partial delay equations (see, for example, I.~Chueshov \cite{Chueshov2015}). The third step is the derivation of elementary a priori estimates for the norm of classical solutions in $\mathbb{H}$ with the use of the variation of constants formula, estimates for the linear problem, \eqref{EQ: DelayLipschitz} and \textbf{(MES)}. This allows us to obtain generalized solutions by continuity. Thus, the conclusion of Theorem \ref{TH: DelaySemigroupTh} can be easily extended to some partial differential equations with delay.

The well-posedness of nonlinear autonomous and nonautonomous (partial) delay equations in Banach spaces was studied in several papers, for example, \cite{Faheem1987, Webb1976,Webb1981}. The main approach in these papers is based on applications of the theory of accretive operators. Besides the fact that the theory itself is nonelementary and its applications require pages and pages of various estimates, in applications to delay equations one faces with several restrictions. For example, to apply results of G.~F.~Webb \cite{Webb1976} and G.F.~Webb and M.~Badii \cite{Webb1981} to ODEs in $\mathbb{R}^{n}$ with delay, the number of ``independent'' discrete delays, roughly speaking, cannot exceed $n$ that is unnatural. In \cite{Webb1981} there is also assumed some smoothness of the right-hand side in $t$, which is linked with the construction of a family of equivalent norms to obtain the accretiveness condition. In the paper of M.~Faheem and M.R.M.~Rao \cite{Faheem1987} only the case of nonautonomous delay differential equations in $\mathbb{R}^{n}$ is considered. Their main restriction is posed on the nonlinear part that must be everywhere defined in $\mathbb{H}$ (i.~e. no discrete delays can appear in the nonlinear term) and some smoothness in $t$ is also assumed. However, the authors of \cite{Faheem1987} considered a nonautonomous linear part and showed the well-posedness for the linear problem, but their assumptions on the linear part in our context allow to consider it as a nonlinear part of \eqref{EQ: ClassicalDelayEquation}, i.~e. \eqref{EQ: DelayLipschitz} and \textbf{(MES)} are satisfied. Thus, our Theorem \ref{TH: DelaySemigroupTh} covers in most and largely extends the final result of \cite{Faheem1987} as well as results from \cite{Webb1976,Webb1981}. Moreover, if we put $\widetilde{A} = 0$ in \eqref{EQ: ClassicalDelayEquation}, then the generation of a $C_{0}$-semigroup for the operator $A$ is easier to obtain. Now, if we make $F(t,\widetilde{C}x_{t})$ to be a linear function of $x_{t}$ for each $t$, then Theorem \ref{TH: DelaySemigroupTh} can be considered as a well-posedness theorem for linear nonautonomous delay equations in $\mathbb{H}$. There are other papers on the question of well-posedness for delay equations, which use more concrete approaches (see D.~Breda \cite{Breda2010} and links therein), but none of them entirely covers the result of Theorem \ref{TH: DelaySemigroupTh} and cannot compete with the simplicity of its proof.

We also note that the studying of delay equations in the Hilbert space setting is rarely seen in works on dynamical systems. For example, recent monographs on infinite-dimensional dynamical systems (see A.N.~Carvalho, J.A.~Langa and J.C.~Robinson \cite{CarvalhoLangaRobinson2012}; I.~Chueshov \cite{Chueshov2015}) treat delay equations in the space of continuous functions. Below we will justify advantages of the Hilbert space geometry for understanding of the dynamics of such equations.

Namely, the obtained variation of constants formula \textbf{(VAR)} is useful for applications of the Frequency Theorem \cite{Anikushin2020Freq,Likhtarnikov1977} (a theorem that allows to construct quadratic Lyapunov functionals, which we use to construct inertial manifolds) since it allows to pass from the nonlinear equation to a linear inhomogeneous system, which is studied in the context of the theorem\footnote{See \cite{Anikushin2020FreqParab,Anikushin2020Freq} for discussions on developments of the Frequency Theorem starting from the first infinite-dimensional version proved by V.A.~Yakubovich and A.L.~Likhtarnikov \cite{Likhtarnikov1977}.}. It is also convenient for studying of almost periodic in time equations, where, as it is well-known, one should compactify the equation by considering the so-called limiting equations and check the continuous dependence with respect to perturbations of the right-hand side. We refer to our works \cite{Anikushin2020Geom,Anikushin2021AADyn} for applications to such and other equations in the infinite-dimensional context. In finite-dimensions the Frequency Theorem (also known as the Kalman-Yakubovich-Popov lemma) has already proved to be useful for studying of dimensional-like properties of autonomous and almost periodic ODEs (see, for example, M.M.~Anikushin \cite{Anikushin2019SmithRed, Anikushin2019Vestnik}; M.M.~Anikushin, V.~Reitmann and A.O.~Romanov \cite{AnikushinRR2019}; N.V.~Kuznetsov and V.~Reitmann \cite{KuzReit2020} and references therein for a range of applications).

Theorem \ref{TH: DelaySemigroupTh} is also convenient for studying of differentiability properties of semigroups (see Section \ref{SEC: DelayDifferentiability}) and inertial manifolds (see Section \ref{SEC: InvariantManifoldsDelay}). We do not explicitly state these results in this introduction because strict formulations require long preparations. Along with Theorem \ref{TH: DelaySemigroupTh} these results form a basis for parts of our adjacent works \cite{Anikushin2020Geom,Anikushin2021AADyn,Anikushin2021SS} concerned with delay equations.

A part of the present work is motivated by our previous paper \cite{Anikushin2020Red} on the existence of invariant topological manifolds for cocycles in Hilbert spaces and especially Problem 2 posed therein, which asks for extensions of the theory for delay equations with discrete delays. At Appendix \ref{APP: AdditionToReduction} we present such an extension of the theory, which also gives a solution to Problem 1 from \cite{Anikushin2020Red} asking for conditions to provide continuous dependence of the inertial manifold fibres. In fact, under some natural additional assumptions these manifolds possess classical properties such as exponential tracking and normal hyperbolicity, which are proved and discussed within a more general context in our adjacent work \cite{Anikushin2020Geom}. In the present paper we discuss these extensions for delay equations in Section \ref{SEC: InvariantManifoldsDelay}.

Concerning inertial manifolds for delay equations, here we should mention the paper of C.~Chicone \cite{Chicone2003}, which extends classical results of Yu.A.~Ryabov and R.D.~Driver, devoted to inertial manifolds for delay equations. The theory considered in \cite{Chicone2003} allows to construct $n$-dimensional inertial manifolds for delay equations in $\mathbb{R}^{n}$ with small delays. For comparison, our approach has some delicateness in applications, where the Frequency Theorem for delay equations \cite{Anikushin2020Freq} is used. In particular, it allows to construct inertial manifolds with the dimension that is not limited to $n$ as in \cite{Chicone2003}. Moreover, it is shown in \cite{Anikushin2020Freq} that results from \cite{Chicone2003} can be deduced (with relaxed in some cases conditions) in most from the Frequency Theorem and our general theory from \cite{Anikushin2020Geom} (which, as we have mentioned, uses results of the present paper) after simple computations.

Our results from \cite{Anikushin2020Geom,Anikushin2020FreqParab,Anikushin2020Freq} allow to consider various inertial manifolds theories within a general geometric context based on the use of quadratic Lyapunov functionals. In particular, it is shown by the author in \cite{Anikushin2020FreqParab} that the Spectral Gap Condition, used by C.~Foias, G.R.~Sell and R.~Temam \cite{Temam1997} (see also the survey of S.~Zelik \cite{Zelik2014}) is a particular case of some frequency inequality arising in various versions of the Frequency Theorem, which provides the existence of quadratic functionals. This frequency inequality was also used by R.A.~Smith \cite{Smith1992,Smith1994} for his developments of the Poincar\'{e}-Bendixson theory for delay equations in $\mathbb{R}^{n}$ and certain reaction-diffusion equations\footnote{Note that R.A.~Smith did not use quadratic Lyapunov functionals in the infinite-dimensional case, probably due to his inability to show their existence. Our extensions of the Frequency Theorem presented in \cite{Anikushin2020Freq,Anikushin2020FreqParab} show that under Smith's frequency inequality such functionals do exist.}. Moreover, in the surveys of A.~Kostianko et al. \cite{KostiankoZelikSA2020} or S.~Zelik \cite{Zelik2014} it is shown that the Spatial Averaging Principle, which was proposed by J.~Mallet-Paret and G.~R.~Sell to relax the Spectral Gap Condition in some cases, also lead to the existence of certain quadratic functionals. For more discussions in this direction see \cite{Anikushin2020Geom,Anikushin2020FreqParab,Anikushin2020Freq}. Note also that our geometric approach can be also applied to construct stable/unstable and local center manifolds along invariant sets. 

Besides the above mentioned construction of quadratic Lyapunov functionals, an advantage may be given by possible applications of dimension estimates, which use the Hilbert space geometry, and approximation of dimension-like characteristics and spectra (see D.~Breda and E.~Van Vleck \cite{Breda2014}). In this direction we prove in Section \ref{SEC: DelayDifferentiability} the $C^{1}$-differentiability property for semigroups in $\mathbb{H}$ generated by \eqref{EQ: ClassicalDelayEquation} with $F \in C^{1}$ and $F'$ globally bounded. This allows to apply well-known dimension estimates for the fractal dimension (see V.V.~Chepyzhov and A.A.~Ilyin \cite{ChepyzhovIlyin2004}), the Hausdorff dimension (see R.~Temam \cite{Temam1997}, J.~Mallet-Paret \cite{MalletParet1976}) and the topological entropy (see N.V.~Kuznetsov and V.~Reitmann \cite{KuzReit2020}) of compact invariant sets. However, obtaining effective dimension estimates for delay equations seems to be a nontrivial problem and we try to discuss its nontriviality in Section \ref{SEC: DelayDifferentiability}. This can be seen also from rare papers on the topic, where the pioneering paper of J.~Mallet-Paret \cite{MalletParet1976} and a paper of J.W-H.~So and J.~Wu \cite{SoWu1991} should be mentioned. Both of the papers represent an approach, which somehow utilizes compactness of the differentials to show finite-dimensionality (as well as in the mentioned monographs \cite{CarvalhoLangaRobinson2012,Chueshov2015,Hale1977}) and does not provide any effective estimates. It seems that the authors of \cite{SoWu1991} were the first to mention some nontriviality of the problem, but they did not provide any discussions on it (referring to subsequent works that never appeared). Moreover, in \cite{MalletParretNussbaum2013} J.~Mallet-Paret and R.D.~Nussbaum studied compound processes arising from linear nonhomogeneous scalar delay equations with monotone feedback (such equations may appear after linearization). Their nontrivial results show that such compound processes for certain powers (corresponding either to odd or even-dimensional volumes growth) preserves a convex reproducing normal cone of rank 1. This allows us to treat the problem in the general context of spectral theory. Here a result of D.~Dragi\v{c}evi\'{c} \cite{Dragicevic2018}, which describes endpoints of the Sacker-Sell spectral intervals as Lyapunov exponents over some ergodic measures, are crucial for understanding of the problem. In Section \ref{SEC: ExampleDelay}, by means of the Suarez-Schopf model for ENSO \cite{Suarez1988}, we use this approach to obtain effective conditions for the nonexistence of periodic orbits and homoclinics. To the best of our knowledge, this is the first time when the criterion of R.A.~Smith \cite{Smith1986HD} and its developments by M.Y.~Li and J.S.~Muldowney \cite{LiMuldowney1995} are effectively applied for equations with delay.

This paper is organized as follows. In Section \ref{SEC: ProofSemigroupTh} we prove Theorem \ref{TH: DelaySemigroupTh}. In Section \ref{SEC: DelayDifferentiability} we prove a theorem on $C^{1}$-differentiability of the semigroup given by \eqref{EQ: AbstractDelayHilberSpace} in the autonomous case (Theorems \ref{LEM: DelayDifferentiabilityLemma} and \ref{TH: DifferentiabilityTh}). Here we also consider the problem of obtaining effective dimension estimates (see Problem \ref{PROB: ProblemEffectiveDimEstDelay}) and discuss its nontriviality. In Section \ref{SEC: ExampleDelay} we consider the Suarez-Schopf model for El Ni\~{n}o given by a scalar delay equation, pose a problem linked with dimension estimates and give a partial solution. In Section \ref{SEC: InvariantManifoldsDelay} we discuss inertial manifolds and their properties, including $C^{1}$-differentiability (see Theorem \ref{TH: SmoothnessManifoldDelay}), normal hyperbolicity and exponential tracking. Moreover, we obtain dimension estimates using some Riemannian metric, which naturally arises from the construction of inertial manifolds (see Theorem \ref{TH: FrequencyDimEstimateDelay}). In Section \ref{SEC: ExampleDelayIM} we continue our investigation of the Suarez-Schopf model and obtain conditions for the existence of one-dimensional and two-dimensional inertial manifolds (see Theorems \ref{TH: SSmodelOneDimIMSharper} and \ref{TH: SSmodelTwoDimIMSharper}). At Appendix \ref{APP: AdditionToReduction} we present a generalization of our main result from \cite{Anikushin2020Red}, which is, in particular, convenient for delay equations.
\section{Construction of delay semigroups}
\label{SEC: ProofSemigroupTh}

At first we state here a lemma from \cite{Anikushin2020Freq}, which in particular shows that \textbf{(MES)} is satisfied for any operator $\widetilde{C}$. Let $T>0$ and consider the subspace $\mathcal{S}_{T} \subset C([0,T];C([-\tau, 0];\mathbb{R}^{n})$ of all continuous functions $\phi \colon [0,T] \to C([-\tau,0];\mathbb{R}^{n})$ such that there exists a continuous function $x \colon [-\tau, T] \to \mathbb{R}^{n}$ with the property $\phi(t) = x_{t}$ for all $t \geq 0$. The following lemma is Lemma 8 from \cite{Anikushin2020Freq}.
\begin{lemma}
	\label{LEM: DelayMeasuEstimate}
	Let $\widetilde{C} \colon C([-\tau,0];\mathbb{R}^{n}) \to \mathbb{R}^{r}$ be a bounded linear operator. Then there exists a constant $M=M(\widetilde{C})>0$ such that for all $T>0$, $p \geq 1$ and any $\phi \in \mathcal{S}_{T}$ we have
	\begin{equation}
	\label{EQ: FuncLemmaLPIneq}
	\left(\int_{0}^{T} |\widetilde{C}\phi(t)|^{p} dt\right)^{1/p} \leq M_{\widetilde{C}} \cdot \left( \| \phi(0) \|^{p}_{L_{p}(-\tau,0;\mathbb{R}^{n})} + \| x(\cdot) \|^{p}_{L_{p}(0,T;\mathbb{R}^{n})} \right)^{1/p}.
	\end{equation}
\end{lemma}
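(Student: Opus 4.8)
The plan is to represent $\widetilde{C}$ through its Riesz measures and then reduce the estimate to Jensen's inequality and Tonelli's theorem. First I would apply the Riesz representation theorem componentwise: writing $\phi=(\phi^{1},\dots,\phi^{n})$ and $\widetilde{C}\phi=((\widetilde{C}\phi)_{1},\dots,(\widetilde{C}\phi)_{r})$, each scalar functional $\phi^{j}\mapsto(\widetilde{C}\phi)_{k}$ is given by a signed Borel measure $\mu_{j,k}$ on $[-\tau,0]$, so that $(\widetilde{C}\phi)_{k}=\sum_{j=1}^{n}\int_{-\tau}^{0}\phi^{j}(\theta)\,d\mu_{j,k}(\theta)$. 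Collecting total variations, set $\nu:=\sum_{j=1}^{n}\sum_{k=1}^{r}|\mu_{j,k}|$, a finite nonnegative Borel measure on $[-\tau,0]$ with total mass $N:=\nu([-\tau,0])$ controlled by $\|\widetilde{C}\|$. (Alternatively, as indicated in the Introduction, one may bypass Riesz's theorem and deduce the same bound from the uniform boundedness principle together with the weak-$*$ density of finite combinations of $\delta$-functionals in $C([-\tau,0];\mathbb{R}^{n})^{*}$; the measure-theoretic route seems cleaner here.)

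For $\phi\in\mathcal{S}_{T}$ with associated $x\colon[-\tau,T]\to\mathbb{R}^{n}$ we have $\phi(t)(\theta)=x(t+\theta)$, whence the pointwise bound
\begin{equation*}
|\widetilde{C}\phi(t)|\le\sum_{j,k}\int_{-\tau}^{0}|x^{j}(t+\theta)|\,d|\mu_{j,k}|(\theta)\le\int_{-\tau}^{0}|x(t+\theta)|\,d\nu(\theta),\qquad t\in[0,T].
\end{equation*}
If $N=0$ there is nothing to prove; otherwise let $\widehat{\nu}:=N^{-1}\nu$ be the normalized probability measure. Since $s\mapsto s^{p}$ is convex for $p\ge1$, Jensen's inequality applied to $\widehat{\nu}$ gives $|\widetilde{C}\phi(t)|^{p}\le N^{p}\int_{-\tau}^{0}|x(t+\theta)|^{p}\,d\widehat{\nu}(\theta)=N^{p-1}\int_{-\tau}^{0}|x(t+\theta)|^{p}\,d\nu(\theta)$, which keeps the constant uniform in $p$.

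Finally I would integrate in $t$ over $[0,T]$ and apply Tonelli's theorem:
\begin{equation*}
\int_{0}^{T}|\widetilde{C}\phi(t)|^{p}\,dt\le N^{p-1}\int_{-\tau}^{0}\Big(\int_{0}^{T}|x(t+\theta)|^{p}\,dt\Big)d\nu(\theta)=N^{p-1}\int_{-\tau}^{0}\Big(\int_{\theta}^{T+\theta}|x(s)|^{p}\,ds\Big)d\nu(\theta).
\end{equation*}
For $\theta\in[-\tau,0]$ the interval $[\theta,T+\theta]$ is contained in $[-\tau,T]$, so the inner integral is at most $\int_{-\tau}^{0}|x(s)|^{p}\,ds+\int_{0}^{T}|x(s)|^{p}\,ds=\|\phi(0)\|^{p}_{L_{p}(-\tau,0;\mathbb{R}^{n})}+\|x(\cdot)\|^{p}_{L_{p}(0,T;\mathbb{R}^{n})}$, using $\phi(0)(\theta)=x(\theta)$. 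Since $\nu([-\tau,0])=N$, this yields $\int_{0}^{T}|\widetilde{C}\phi(t)|^{p}\,dt\le N^{p}\big(\|\phi(0)\|^{p}_{L_{p}}+\|x(\cdot)\|^{p}_{L_{p}(0,T)}\big)$, and taking $p$-th roots proves \eqref{EQ: FuncLemmaLPIneq} with $M_{\widetilde{C}}:=N$, depending only on $\widetilde{C}$.

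The argument is short, and the only delicate point is the bookkeeping that makes the constant simultaneously independent of $p$ and of $T$: independence of $p$ comes from normalizing $\nu$ \emph{before} invoking Jensen (whereas, say, applying H\"older on a fixed interval of length $\tau$ would introduce a $\tau$- and $p$-dependent factor), and independence of $T$ comes from the translation inclusion $[\theta,T+\theta]\subset[-\tau,T]$ combined with Tonelli. Beyond this there is no serious obstacle.
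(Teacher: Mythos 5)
Your proof is correct; note, however, that the paper does not actually supply a proof here — it simply imports the statement as Lemma 8 of \cite{Anikushin2020Freq}, with the Introduction sketching the intended argument as uniform boundedness together with weak-$*$ density of finite combinations of Dirac functionals. Your route is genuinely different: you bypass the approximation/UBP scheme entirely by invoking the Riesz representation theorem to exhibit an explicit nonnegative controlling measure $\nu$, and then the chain Jensen $\to$ Tonelli $\to$ translation inclusion $[\theta,T+\theta]\subset[-\tau,T]$ cleanly delivers the bound with constant $M_{\widetilde{C}}=\nu([-\tau,0])$, visibly uniform in both $T$ and $p$. This is tighter than what the density argument gives without extra care, because there one must first prove the estimate for finite sums $\sum a_i\phi(\theta_i)$ and then manage the passage to the limit so that the constant survives; the normalization trick (applying Jensen to the probability measure $\widehat{\nu}=N^{-1}\nu$ before integrating in $t$) does that bookkeeping for you in one step. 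The two small points worth making explicit in a final write-up are the elementary inequalities $|\widetilde{C}\phi(t)|\le\sum_{k}|(\widetilde{C}\phi(t))_k|$ (Euclidean norm dominated by $\ell^1$) and $|x^{j}|\le|x|$, which justify collapsing the double sum of total variations into the single measure $\nu$; once those are noted the argument is complete.
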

Using Lemma \ref{LEM: DelayMeasuEstimate} with $p=1$, we get that \textbf{(MES)} with $M_{C}:= M_{\widetilde{C}} (1+\sqrt{\tau})$ is satisfied for the operator $C$ corresponding to $\widetilde{C}$ from \eqref{EQ: ClassicalDelayEquation}.

\begin{remark}
	From the Riesz representation theorem, there exists a $r \times n$-matrix-valued function of bounded variation $\gamma(\theta)$, where $\theta \in [-\tau,0]$, such that
	\begin{equation}
		\widetilde{C}\phi = \int_{-\tau}^{0}d\gamma(\theta) \phi(\theta) \text{ for any } \phi \in C([-\tau,0];\mathbb{R}^{n}).
	\end{equation}
    It can be shown that $M_{\widetilde{C}} \leq \operatorname{Var}\gamma$, where $\operatorname{Var}$ is the total variation on $[-\tau,0]$ (see \cite{Anikushin2020Freq}).
\end{remark}
\begin{remark}
	\label{REM: VariationOfConstantsDelay}
		Let us consider the space $\mathcal{H}^{p}_{T}$ of all functions $\phi(\cdot) \in C([0,T];L_{p}(-\tau,0);\mathbb{R}^{n})$ such that for some $x(\cdot) \in L_{p}(-\tau,0;\mathbb{R}^{n})$ we have $\phi(t)=x_{t}$ for all $t \geq 0$. It is convenient to endow $\mathcal{H}^{p}_{T}$ with the norm
		\begin{equation}
			\| \phi(\cdot) \|_{\mathcal{H}^{p}_{T}}:= \left( \|\phi(0)\|^{p}_{L_{p}(-\tau,0;\mathbb{R}^{n})} + \| x(\cdot) \|^{p}_{L_{p}(0,T;\mathbb{R}^{n})}  \right)^{1/p}.
		\end{equation}
		Now Lemma \ref{LEM: DelayMeasuEstimate} shows that the operator $\mathcal{H}^{p}_{T} \ni \phi(\cdot) \mapsto \mathcal{I}_{C}(\phi(\cdot)) \in L_{p}(0,T;\mathbb{R}^{r})$, where
		\begin{equation}
			\mathcal{I}_{C}(\phi(\cdot))(s) = \widetilde{C}\phi(s) \text{ for } \phi \in \mathcal{S}_{T} \text{ and } s \in [0,T]
		\end{equation}
		is well-defined. Moreover, its norm is independent of $T$ and $p$. This is the sense in which the variation of constants formula from \eqref{EQ: VOCformulaDelay} can be understood for any $v_{0} \in \mathbb{H}$.
\end{remark}

The following lemma is Theorem 3.23 from \cite{BatkaiPiazzera2005}.
\begin{lemma}
	\label{LEM: DelayODEGeneratorLemma}
	The operator $A$ given by \eqref{EQ: OperatorADefinition} is the generator of a $C_{0}$-semigroup $G(t)$, $t \geq 0$, in $\mathbb{H}$. In particular, there are constants $M_{A}, \varkappa_{0}>0$ such that
	\begin{equation}
	\label{EQ: ExponentialEstimate}
	|G(t)v_{0}|_{\mathbb{H}} \leq M_{A} e^{\varkappa_{0}t}|v_{0}|_{\mathbb{H}} \text{ for all } t \geq 0 \text{ and } v_{0} \in \mathbb{H}.
	\end{equation}
\end{lemma}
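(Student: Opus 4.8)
\emph{Proof proposal.} The plan is not to construct the semigroup from scratch but to transport the classical theory from $\mathbb{E}=C([-\tau,0];\mathbb{R}^{n})$ into $\mathbb{H}$ with the help of the measurement estimate of Lemma \ref{LEM: DelayMeasuEstimate}, and then to identify the generator. By the classical theory of linear delay equations (J.~K.~Hale \cite{Hale1977}), the problem $\dot{x}(t)=\widetilde{A}x_{t}$ defines a $C_{0}$-semigroup $\widetilde{G}(t)\colon\mathbb{E}\to\mathbb{E}$, $\widetilde{G}(t)\phi:=x_{t}(\cdot;\phi)$, with $\|\widetilde{G}(t)\|_{\mathbb{E}\to\mathbb{E}}\leq Me^{\omega t}$ and classical generator $\phi\mapsto\phi'$ on $\{\phi\in C^{1}:\phi'(0)=\widetilde{A}\phi\}$. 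The first task is to bound $\widetilde{G}(t)$ also in the weaker $\mathbb{H}$-norm, uniformly on compact time intervals.

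For the $\mathbb{H}$-estimate, write the classical solution in integral form $x(t)=\phi(0)+\int_{0}^{t}\widetilde{A}x_{s}\,ds$, so $|x(t)|\leq|\phi(0)|+\int_{0}^{t}|\widetilde{A}x_{s}|\,ds$. Applying Lemma \ref{LEM: DelayMeasuEstimate} with $p=1$ to the operator $\widetilde{A}$ in place of $\widetilde{C}$ gives $\int_{0}^{t}|\widetilde{A}x_{s}|\,ds\leq M_{\widetilde{A}}\big(\|\phi\|_{L_{1}(-\tau,0)}+\|x\|_{L_{1}(0,t)}\big)$; bounding $\|\phi\|_{L_{1}}\leq\sqrt{\tau}\|\phi\|_{L_{2}}$ and applying Gr\"{o}nwall's inequality to $y(t):=\sup_{[0,t]}|x(\cdot)|$ yields $y(t)\leq C\big(|\phi(0)|+\|\phi\|_{L_{2}}\big)e^{M_{\widetilde{A}}t}$. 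Combining this with $\int_{t-\tau}^{t}|x(s)|^{2}ds\leq\tau\,y(t)^{2}+\|\phi\|^{2}_{L_{2}}$ gives the exponential estimate \eqref{EQ: ExponentialEstimate} for $v_{0}\in\mathbb{E}$ with explicit $M_{A},\varkappa_{0}$. Since $\mathbb{E}$ is dense in $\mathbb{H}$ (given $(a,\psi)\in\mathbb{H}$, take an $L_{2}$-approximation of $\psi$ by continuous functions vanishing near $\theta=0$ and add a small continuous bump attaining $a$ at $\theta=0$), each $\widetilde{G}(t)$ extends uniquely to a bounded operator $G(t)$ on $\mathbb{H}$ satisfying \eqref{EQ: ExponentialEstimate}; the semigroup law passes to the limit, and strong continuity $G(t)v_{0}\to v_{0}$ holds first on $\mathbb{E}$ — because $t\mapsto(x(t),x_{t})$ is continuous into $\mathbb{H}$ by uniform continuity of $x$ on compacts and continuity of translation in $L_{2}$ — and then on all of $\mathbb{H}$ by the uniform bound and density.

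It remains to identify the generator $\mathcal{A}$ of $G(t)$ with $A$. For the inclusion $A\subset\mathcal{A}$, take $v_{0}=(\phi(0),\phi)$ with $\phi\in W^{1,2}(-\tau,0;\mathbb{R}^{n})$ and show $t^{-1}(G(t)v_{0}-v_{0})\to(\widetilde{A}\phi,\phi')=Av_{0}$ in $\mathbb{H}$: the $\mathbb{R}^{n}$-component converges to $\dot{x}(0)=\widetilde{A}\phi$, while for the $L_{2}$-component one splits $[-\tau,0]=[-\tau,-t]\cup[-t,0]$, uses that difference quotients of $\phi\in W^{1,2}$ converge to $\phi'$ in $L_{2}$ on the first piece (by $L_{2}$-translation continuity of $\phi'$), and controls the shrinking boundary layer $[-t,0]$ via $|x(t+\theta)-\phi(\theta)|=O(\sqrt{t})$ (from absolute continuity of $\phi$ and boundedness of $\dot{x}$ near $0$), whose squared $L_{2}$-contribution is $o(1)$. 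For the reverse inclusion it suffices that $\lambda I-A$ be surjective for some $\lambda\in\rho(\mathcal{A})$. Indeed, for $\lambda>\|\widetilde{A}\|_{\mathbb{E}\to\mathbb{R}^{n}}$ the resolvent equation $(\lambda I-A)(x,\phi)=(y,\psi)$ is uniquely solvable: the second component forces $\phi(\theta)=e^{\lambda\theta}x-\int_{0}^{\theta}e^{\lambda(\theta-r)}\psi(r)\,dr$, which lies in $W^{1,2}$ and satisfies $\phi(0)=x$, and substituting into $\lambda x-\widetilde{A}\phi=y$ gives $(\lambda I-L_{\lambda})x=y+\widetilde{A}\big(\theta\mapsto\int_{0}^{\theta}e^{\lambda(\theta-r)}\psi(r)\,dr\big)$, where $L_{\lambda}x:=\widetilde{A}(\theta\mapsto e^{\lambda\theta}x)$ has $\|L_{\lambda}\|\leq\|\widetilde{A}\|_{\mathbb{E}\to\mathbb{R}^{n}}\sup_{\theta\in[-\tau,0]}e^{\lambda\theta}=\|\widetilde{A}\|_{\mathbb{E}\to\mathbb{R}^{n}}$, so $\lambda I-L_{\lambda}$ is invertible. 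Hence $\lambda I-A$ is bijective; combined with $A\subset\mathcal{A}$ and bijectivity of $\lambda I-\mathcal{A}$ this forces $\mathcal{D}(A)=\mathcal{D}(\mathcal{A})$ and $A=\mathcal{A}$.

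The only genuinely delicate point is the generator identification, more precisely the $L_{2}$-convergence of the difference quotient of the history segment, where having only $W^{1,2}$- (rather than $C^{1}$-) regularity on $\phi$ forces the boundary-layer estimate above; everything else is a routine combination of Gr\"{o}nwall's inequality, Lemma \ref{LEM: DelayMeasuEstimate} and density. A more \emph{operator-theoretic} alternative — decomposing $A=A_{0}+P$ with $A_{0}$ the generator of the trivial shift semigroup on $\mathbb{H}$ (obtained via Lumer--Phillips with a weighted $L_{2}$-inner product $|x|^{2}+\int_{-\tau}^{0}e^{2\gamma\theta}|\phi(\theta)|^{2}d\theta$) and $P(x,\phi)=(\widetilde{A}\phi,0)$ treated as a Miyadera--Voigt perturbation — also works, but verifying the perturbation smallness constant is awkward precisely because $\widetilde{A}$ is controlled only by the sup-norm on $\mathbb{E}$ and not by the $\mathbb{H}$-norm; this is the obstruction that the estimate of Lemma \ref{LEM: DelayMeasuEstimate} is designed to circumvent.
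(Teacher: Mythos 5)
The paper does not actually prove Lemma~\ref{LEM: DelayODEGeneratorLemma}: it is stated as Theorem~3.23 of B\'{a}tkai--Piazzera~\cite{BatkaiPiazzera2005} and simply cited. Your argument is therefore a genuinely different, self-contained route: you take the classical $\mathbb{E}$-semigroup as given (Hale~\cite{Hale1977}), obtain an a priori $\mathbb{H}$-bound by applying Lemma~\ref{LEM: DelayMeasuEstimate} to $\widetilde{A}$ in the role of $\widetilde{C}$ together with Gr\"{o}nwall, extend to $\mathbb{H}$ by density, and identify the generator via ``$A\subset\mathcal{A}$ plus surjectivity of $\lambda I-A$.'' This is precisely the ``transport $\mathbb{E}$-solutions into $\mathbb{H}$ via the measurement estimate'' strategy that Section~\ref{SEC: ProofSemigroupTh} deploys for the nonlinear Theorem~\ref{TH: DelaySemigroupTh}, specialized to the linear case; it makes the linear well-posedness self-contained within the paper's own toolbox, whereas the citation route is shorter and hands over the generator domain for free from the abstract delay-semigroup framework of~\cite{BatkaiPiazzera2005}.

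One step in your generator identification is stated too loosely. On the boundary layer $\theta\in[-t,0]$ the bound $|x(t+\theta)-\phi(\theta)|=O(\sqrt{t})$ does not by itself give $o(1)$ for the squared $L_{2}$-contribution of the difference quotient: substituting it into $t^{-2}\int_{-t}^{0}|\cdot|^{2}\,d\theta$ yields only $O(1)$. The conclusion is correct, but the constant hidden in your $O(\sqrt{t})$ must be kept local. Writing $x(t+\theta)-\phi(\theta)=\int_{0}^{t+\theta}\dot{x}(s)\,ds+\int_{\theta}^{0}\phi'(s)\,ds$ and using Cauchy--Schwarz in the form $\bigl|\int_{\theta}^{0}\phi'\bigr|\le\|\phi'\|_{L_{2}(-t,0)}\sqrt{|\theta|}$ gives
\begin{equation}
\int_{-t}^{0}\frac{|x(t+\theta)-\phi(\theta)|^{2}}{t^{2}}\,d\theta\ \le\ \tfrac{2}{3}B^{2}t+\|\phi'\|^{2}_{L_{2}(-t,0)}\ \longrightarrow\ 0,\qquad B:=\sup_{s\in[0,t]}|\dot{x}(s)|,
\end{equation}
where the second term tends to zero by absolute continuity of the Lebesgue integral of $|\phi'|^{2}$. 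There is also a harmless sign slip in the resolvent computation (it should read $(\lambda I-L_{\lambda})x=y-\widetilde{A}\bigl(\theta\mapsto\int_{0}^{\theta}e^{\lambda(\theta-r)}\psi(r)\,dr\bigr)$), which does not affect the invertibility of $\lambda I-L_{\lambda}$. The remainder of the argument is sound.
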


The following lemma is a particular case of Lemma 3.6 from \cite{BatkaiPiazzera2005}.
\begin{lemma}
	\label{LEM: ClassicalSolutionHistoryFunction}
	Let $x \colon [-\tau,\infty) \to \mathbb{R}^{n}$ be a function such that $x \in W^{1,2}_{loc}([-\tau,\infty);\mathbb{R}^{n})$. Define the history function\footnote{Here, as in the previous section, $x_{t}(\theta):=x(t+\theta)$ for $\theta \in [-\tau,0]$.} $\phi(t):=x_{t}$ for $t \geq 0$. Then $\phi \in C^{1}([0,+\infty); L_{2}(-\tau,0;\mathbb{R}^{n}))$ and for all $t \geq 0$ we have $\phi(t) \in W^{1,2}(-\tau,0;\mathbb{R}^{n})$ and
	\begin{equation}
	\dot{\phi}(t) = \frac{d}{d\theta} \phi(t) \text{ in } L_{2}(-\tau,0;\mathbb{R}^{n}).
	\end{equation}
\end{lemma}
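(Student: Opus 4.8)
The plan is to reduce the statement to the continuity of translations in $L_2$. First I would fix an arbitrary $T>0$ and an auxiliary $T'>T$; since $x \in W^{1,2}_{loc}$, its restriction to $[-\tau,T']$ lies in $W^{1,2}(-\tau,T';\mathbb{R}^{n})$, so $x$ is absolutely continuous on $[-\tau,T']$ and $g:=\dot{x}\in L_{2}(-\tau,T';\mathbb{R}^{n})$. Writing $g_{t}(\theta):=g(t+\theta)$ for $\theta\in[-\tau,0]$, the invariance of Sobolev spaces under translation gives immediately that $\phi(t)=x_{t}\in W^{1,2}(-\tau,0;\mathbb{R}^{n})$ with $\frac{d}{d\theta}\phi(t)=g_{t}$ for every $t\in[0,T]$. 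It then remains to show that $t\mapsto\phi(t)$ is continuously differentiable from $[0,T]$ into $L_{2}(-\tau,0;\mathbb{R}^{n})$ with $\dot{\phi}(t)=g_{t}$.

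For differentiability I would fix $t\in[0,T]$ and a small $h\neq 0$ (a one-sided increment at the endpoints, so that $t+h\in[0,T']$), and use absolute continuity of $x$ to write, for a.e.\ $\theta\in[-\tau,0]$,
\[
\frac{\phi(t+h)-\phi(t)}{h}(\theta)-g_{t}(\theta)=\frac{1}{h}\int_{0}^{h}\bigl(g_{t+u}(\theta)-g_{t}(\theta)\bigr)\,du .
\]
Taking the $L_{2}(-\tau,0;\mathbb{R}^{n})$-norm, applying Jensen's inequality in $u$ and Fubini's theorem, I obtain
\[
\left\|\frac{\phi(t+h)-\phi(t)}{h}-g_{t}\right\|_{L_{2}(-\tau,0)}^{2}\le\frac{1}{|h|}\left|\int_{0}^{h}\|g_{t+u}-g_{t}\|_{L_{2}(-\tau,0)}^{2}\,du\right| .
\]
Since $g$, extended by zero outside $[-\tau,T']$, lies in $L_{2}(\mathbb{R};\mathbb{R}^{n})$, the map $u\mapsto g_{t+u}$ is continuous into $L_{2}(-\tau,0;\mathbb{R}^{n})$; hence the integrand tends to $0$ as $u\to 0$, the right-hand side tends to $0$ as $h\to 0$, and therefore $\dot{\phi}(t)=g_{t}$.

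Finally, the continuity of $t\mapsto\dot{\phi}(t)=g_{t}$ on $[0,T]$ is once more the $L_{2}$-continuity of translations: approximating $g$ in $L_{2}(-\tau,T';\mathbb{R}^{n})$ by a continuous $\widetilde{g}$ and estimating $\|g_{s}-g_{t}\|_{L_{2}(-\tau,0)}$ by $2\|g-\widetilde{g}\|_{L_{2}(-\tau,T')}$ plus a term controlled by the uniform continuity of $\widetilde{g}$ on the compact $[-\tau,T']$. As $T>0$ was arbitrary, this yields $\phi\in C^{1}([0,+\infty);L_{2}(-\tau,0;\mathbb{R}^{n}))$ together with $\dot{\phi}(t)=g_{t}=\frac{d}{d\theta}\phi(t)$. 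There is no serious obstacle here; the only points needing care are the bookkeeping with one-sided difference quotients near $t=0$ and the Fubini interchange (legitimate since $g\in L_{2}$, hence $L_{1}$, on the relevant bounded sets), while the underlying fact, continuity of translations in $L_{p}$ for $p<\infty$, is entirely standard.
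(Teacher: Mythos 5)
Your proof is correct. The paper itself gives no argument for this lemma: it simply states that it is ``a particular case of Lemma~3.6 from \cite{BatkaiPiazzera2005}'' and defers to that reference, so there is no in-paper proof to compare against. Your self-contained argument is the natural elementary one: the identity $\phi(t+h)(\theta)-\phi(t)(\theta)=\int_{0}^{h} g_{t+u}(\theta)\,du$ follows from the absolute continuity of $x$ on bounded intervals, the passage from pointwise increments to the $L_{2}$-estimate via Jensen and Tonelli is legitimate, and both the existence of the derivative $\dot{\phi}(t)=g_{t}$ and its continuity then reduce to the standard strong continuity of translations in $L_{2}$. You are right to flag the only delicate point, the one-sided difference quotient at $t=0$ (for $u<0$ the segment $[u-\tau,u]$ leaves the domain of $x$, so only $h\to 0^{+}$ is meaningful there), and your remark about using $T'>T$ to have room on the right is also the correct bookkeeping. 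In short: the proposal fills in a proof the paper outsources, and it does so correctly.
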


An immediate consequence of Lemma \ref{LEM: ClassicalSolutionHistoryFunction} is the following.
\begin{corollary}
	\label{COR: CorollaryClassicalSolutions}
	Let $x(t)=x(t,t_{0},\phi_{0})$ be the classical solution to \eqref{EQ: ClassicalDelayEquation} such that $\phi_{0} \in W^{1,2}(-\tau,0;\mathbb{R}^{n})$. Then the function $v(t)=(x(t),x_{t})$, $t \geq t_{0}$, is a classical solution to \eqref{EQ: AbstractDelayHilberSpace} with $v_{0}:=v(t_{0})=\phi_{0}$ (or, more precisely, $v(t_{0})=(\phi_{0}(0),\phi_{0})$). Moreover, if we put $\xi(t):=F(Cv(t))$, then $v(\cdot)$ also satisfies the inhomogeneous equation
	\begin{equation}
	\dot{v}(t) = Av(t) + B\xi(t)
	\end{equation}
	and, in particular, the variation of constants formula for $t \geq t_{0}$
	\begin{equation}
	\label{EQ: VariationOfConstants}
	v(t) = G(t-t_{0})v_{0} + \int_{t_{0}}^{t} G(t-s)BF(Cv(s))ds
	\end{equation}
	is valid.
\end{corollary}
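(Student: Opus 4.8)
The plan is to derive Corollary \ref{COR: CorollaryClassicalSolutions} as a direct consequence of Lemma \ref{LEM: ClassicalSolutionHistoryFunction} together with standard regularity for the classical delay solution and the variation of constants formula for the $C_{0}$-semigroup $G(t)$ generated by $A$. First I would recall that, by the classical theory cited in the introduction, the solution $x(\cdot)=x(\cdot,t_{0},\phi_{0})$ is $C^{1}$ on $[t_{0},+\infty)$; since $\phi_{0}\in W^{1,2}(-\tau,0;\mathbb{R}^{n})$, gluing $\phi_{0}$ on $[t_{0}-\tau,t_{0}]$ with the $C^{1}$ piece on $[t_{0},+\infty)$ yields $x\in W^{1,2}_{loc}([t_{0}-\tau,+\infty);\mathbb{R}^{n})$ (the two pieces match continuously at $t_{0}$ because $\phi_{0}(0)=x(t_{0})$). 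Hence Lemma \ref{LEM: ClassicalSolutionHistoryFunction} applies (after the trivial time shift $t\mapsto t-t_{0}$) and gives $\phi(t):=x_{t}\in C^{1}([t_{0},+\infty);L_{2}(-\tau,0;\mathbb{R}^{n}))$ with $\phi(t)\in W^{1,2}(-\tau,0;\mathbb{R}^{n})$ and $\dot{\phi}(t)=\frac{d}{d\theta}\phi(t)$ in $L_{2}$.

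Next I would assemble $v(t):=(x(t),x_{t})$ and check it is a classical solution of \eqref{EQ: AbstractDelayHilberSpace}. Membership $v(t)\in\mathcal{D}(A)$ is immediate: $\phi(t)\in W^{1,2}(-\tau,0;\mathbb{R}^{n})$ and $\phi(t)(0)=x(t+0)=x(t)$, which are exactly the two defining conditions of $\mathcal{D}(A)$. Differentiability of $v(\cdot)$ as an $\mathbb{H}$-valued map follows componentwise: $x(\cdot)$ is $C^{1}$ into $\mathbb{R}^{n}$ and $\phi(\cdot)$ is $C^{1}$ into $L_{2}$ by the previous paragraph, so $v(\cdot)\in C^{1}([t_{0},+\infty);\mathbb{H})$. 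Then
\[
\dot{v}(t)=\left(\dot{x}(t),\dot{\phi}(t)\right)=\left(\widetilde{A}x_{t}+\widetilde{B}F(t,\widetilde{C}x_{t}),\ \tfrac{d}{d\theta}\phi(t)\right)
=\left(\widetilde{A}\phi(t),\tfrac{d}{d\theta}\phi(t)\right)+\left(\widetilde{B}F(t,\widetilde{C}\phi(t)),0\right)=Av(t)+BF(t,Cv(t)),
\]
using the definition \eqref{EQ: OperatorADefinition} of $A$, the definitions of $B$ and $C$, and the fact that $\dot{x}(t)$ satisfies \eqref{EQ: ClassicalDelayEquation}. Setting $\xi(t):=F(Cv(t))$ (autonomous case, or $F(t,Cv(t))$ in general), $\xi(\cdot)$ is continuous since $t\mapsto Cv(t)=\widetilde{C}x_{t}$ is continuous and $F$ is continuous, so $v(\cdot)$ solves the inhomogeneous equation $\dot{v}(t)=Av(t)+B\xi(t)$ with continuous forcing.

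Finally, to obtain \eqref{EQ: VariationOfConstants} I would invoke the standard result for abstract Cauchy problems: if $v(\cdot)\in C^{1}([t_{0},+\infty);\mathbb{H})$ with $v(t)\in\mathcal{D}(A)$ and $\dot{v}(t)=Av(t)+B\xi(t)$ for a continuous $B\xi(\cdot)$, then $v(\cdot)$ coincides with the mild solution, i.e. $v(t)=G(t-t_{0})v_{0}+\int_{t_{0}}^{t}G(t-s)B\xi(s)\,ds$; the usual proof differentiates $s\mapsto G(t-s)v(s)$ on $[t_{0},t]$, uses $\frac{d}{ds}G(t-s)w=-AG(t-s)w=-G(t-s)Aw$ for $w\in\mathcal{D}(A)$, obtains $\frac{d}{ds}G(t-s)v(s)=G(t-s)B\xi(s)$, and integrates. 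Substituting back $\xi(s)=F(Cv(s))$ gives \eqref{EQ: VariationOfConstants}. I do not anticipate a genuine obstacle here; the only point requiring a little care is the regularity bookkeeping of the first paragraph — namely that the concatenation of $\phi_{0}\in W^{1,2}$ with the $C^{1}$ solution lies in $W^{1,2}_{loc}$ so that Lemma \ref{LEM: ClassicalSolutionHistoryFunction} is applicable — and the observation that the pieces match at $t_{0}$ precisely because $(\phi_{0}(0),\phi_{0})\in\mathcal{D}(A)$ forces $\phi_{0}(0)=x(t_{0})$.
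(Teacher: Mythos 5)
Your proof is correct and follows exactly the route the paper intends: the paper labels this statement an ``immediate corollary'' of Lemma \ref{LEM: ClassicalSolutionHistoryFunction} and gives no further argument, and what you have written is precisely the filling-in of that step — apply Lemma \ref{LEM: ClassicalSolutionHistoryFunction} after the $W^{1,2}_{loc}$ concatenation observation, verify $v(t)\in\mathcal{D}(A)$ and the componentwise $C^{1}$ regularity, identify $\dot v=Av+BF(t,Cv)$ from the definitions of $A$, $B$, $C$, and then invoke the standard passage from a classical solution of an inhomogeneous abstract Cauchy problem to the mild (variation-of-constants) form. The only thing worth flagging — which you in fact noticed — is that the corollary's statement drops the $t$-argument in $\xi(t):=F(Cv(t))$ even though the equation is non-autonomous; this is a harmless notational slip in the paper, and your parenthetical correction is the right reading.
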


\begin{remark}
	\label{REM: RemarkLinearPartLemma}
	An analog of Lemma \ref{LEM: DelayODEGeneratorLemma} for parabolic or hyperbolic equations can also be proved (see, for example, Theorem 3.29 in \cite{BatkaiPiazzera2005}). To proceed from Lemma \ref{LEM: ClassicalSolutionHistoryFunction} to Corollary \ref{COR: CorollaryClassicalSolutions} we have to establish the well-posedness (=existence of classical solutions) in the space of continuous functions. This is also well-studied for partial differential equations with delay. See, for example, \cite{Chueshov2015}.
\end{remark}

Now we can give a proof of Theorem \ref{TH: DelaySemigroupTh}.

\begin{proof}
	Let $x_{j}(t)=x_{j}(t,t_{0},\phi_{0,j})$, where $t \geq t_{0}$ and $j=1,2$, be two classical solutions to \eqref{EQ: ClassicalDelayEquation} and put $v_{j}(t) :=(x(t),x_{t})$. If we suppose that $\phi_{0,j} \in W^{1,2}(-\tau,0;\mathbb{R}^{n})$, then $v_{j}(\cdot)$ satisfies \eqref{EQ: VariationOfConstants} due to Corollary \ref{COR: CorollaryClassicalSolutions}. From this, \eqref{EQ: ExponentialEstimate}, \eqref{EQ: DelayLipschitz} and \textbf{(MES)} we get
	\begin{equation}
	\begin{split}
	|v_{1}(t)-v_{2}(t)|_{\mathbb{H}} \leq\\
	\leq M_{A}e^{\varkappa_{0}(t-t_{0})}|v_{1}(t_{0})-v_{2}(t_{0})|_{\mathbb{H}} + M_{A}\Lambda_{t_{0}}^{t_{0}+T} \|B\| \int_{t_{0}}^{t}e^{\varkappa_{0}(t-s)} |C(v_{1}(s)-v_{2}(s))| ds \leq \\
	\leq (M_{A} + M_{A}M_{C}\Lambda_{t_{0}}^{t_{0}+T} \|B\| ) e^{\varkappa_{0}(t-t_{0})}|v_{1}(t_{0})-v_{2}(t_{0})|_{\mathbb{H}} +\\+ M_{A}M_{C}\Lambda_{t_{0}}^{t_{0}+T} \|B\|e^{\varkappa_{0}(t-t_{0})}\int_{t_{0}}^{t}|v_{1}(s)-v_{2}(s)|_{\mathbb{H}}ds.
	\end{split}
	\end{equation}
    From this, by applying the Gronwall lemma, for some constant $M_{1} = M_{1}(T,\Lambda_{t_{0}}^{t_{0}+T})$ we get
	\begin{equation}
	\label{EQ: EstimateLIP}
	|v_{1}(t)-v_{2}(t)|_{\mathbb{H}} \leq M_{1} |v_{1}(t_{0})-v_{2}(t_{0})|_{\mathbb{H}} \text{ for all } t \in [t_{0},t_{0}+T].
	\end{equation}
	From \eqref{EQ: EstimateLIP} for any $t_{0} \in \mathbb{R}$, $v_{0} \in \mathbb{H}$ we can define a generalized solution $v(t,t_{0},v_{0})$, $t \geq t_{0}$, by the continuity and density of $W^{1,2}(-\tau,0;\mathbb{R}^{n})$ in $\mathbb{E}$ and $\mathbb{H}$. Indeed, let a sequence $v_{0,k} \in W^{1,2}(-\tau,0;\mathbb{R}^{n})$, where $k=1,2,\ldots$, tend to $v_{0}$ in $\mathbb{H}$ as $k \to +\infty$. Then \eqref{EQ: EstimateLIP} shows that the sequence $v_{k}(t)=v_{k}(t,t_{0},v_{0,k})$, $t \in [t_{0},t_{0}+T]$, is fundamental in $\mathbb{H}$ for any $T > 0$. Its limit is the generalized solution $v(t,t_{0},v_{0})$, $t \geq t_{0}$, which is independent on the choice of $v_{0,k}$. This proves the initial statement of Theorem \ref{TH: DelaySemigroupTh} and \textbf{(ULIP)}. Moreover for $T \geq t \geq t_{0}+\tau$ we have
	\begin{equation}
	\|v_{1}(t)-v_{2}(t)\|_{\mathbb{E}} \leq \sup_{\theta \in [-\tau,0]}|v_{1}(t+\theta)-v_{2}(t+\theta)|_{\mathbb{H}} \leq M_{1}|v_{1}(t_{0})-v_{2}(t_{0})|_{\mathbb{H}}.
	\end{equation}
	This proves that, in fact, the sequence $v_{k}(t)$, $t \in [t_{0},t_{0}+T]$, defined above is fundamental in $\mathbb{E}$ for $t \geq t_{0}+\tau$ and, consequently, $v(t,t_{0},v_{0}) \in \mathbb{E}$. This proves the smoothing property stated in \textbf{(ULIP)}. In particular, the map $U(t,s)$ for $t \geq s + \tau$ takes bounded sets in $\mathbb{H}$ into bounded sets in $\mathbb{E}$, where $U(t,s)$ coincides with $\widetilde{U}(t,s)$. From the Arzel\'{a}-Ascoli theorem, the map $\widetilde{U}(t,s)$ for $t \geq s + \tau$ takes bounded sets in $\mathbb{E}$ into precompact sets in $\mathbb{E}$. Consequently, $U(t,s) \colon \mathbb{H} \to \mathbb{E}$ is compact for $t \geq s+2\tau$. This shows \textbf{(COM)}. Summarizing the above, it is clear that \textbf{(REG)} is also satisfied. From Corollary \ref{COR: CorollaryClassicalSolutions} and continuity arguments we get \textbf{(VAR)}. The proof is finished.
\end{proof}
\section{Differentiability of delay semigroups}
\label{SEC: DelayDifferentiability}
In this section we suppose that \eqref{EQ: ClassicalDelayEquation} is autonomous, i.~e. $F$ is independent of $t$. We suppose also that $F \in C^{1}(\mathbb{R}^{r}; \mathbb{R}^{m})$. Note that from \eqref{EQ: DelayLipschitz} it immediately follows that $F'$ is bounded. Let the assumptions of Theorem \ref{TH: DelaySemigroupTh} hold. Then there is a semiflow $\varphi^{t} \colon \mathbb{H} \to \mathbb{H}$ given by equation \eqref{EQ: AbstractDelayHilberSpace}, i.~e. it is defined as $\varphi^{t}(v_{0}):=v(t,0,v_{0})$ for all $t \geq 0$ and $v_{0} \in \mathbb{H}$. For $v_{0} \in \mathbb{E}$ one can formally write from \eqref{EQ: AbstractDelayHilberSpace} the linearized along the trajectory $\varphi^{t}(v_{0})$ equation
\begin{equation}
\label{EQ: LinearizedEquation}
\dot{V}(t) = [A + BF'(C\varphi^{t}(v_{0})) C ] V(t)
\end{equation}

Putting $\widetilde{F}(t,y):=F'(C\varphi^{t}(u_{0})) y$, we see that \eqref{EQ: LinearizedEquation} is of the form \eqref{EQ: AbstractDelayHilberSpace} with $F$ changed to $\widetilde{F}$. Thus, we have the following well-posedness result for \eqref{EQ: LinearizedEquation}, which follows\footnote{Although formally Theorem \ref{TH: DelaySemigroupTh} is proved for nonlinearities $F(t,y)$, which are defined for $t \in \mathbb{R}$, it clearly remains true (with obvious modifications) when we have $t \in [0,+\infty)$ only.} from Theorem \ref{TH: DelaySemigroupTh}.
\begin{lemma}
	\label{LEM: LinearizatinoWellPosed}
	For any $v_{0} \in \mathbb{H}$ and $\xi_{0} \in \mathbb{H}$ equation \eqref{EQ: LinearizedEquation} has a unique generalized solution $V(t)=V(t;\xi_{0};v_{0})$ such that $V(0)=\xi_{0}$. For $\xi_{0} \in \mathcal{D}(A)$ the solution $V(t)=V(t;\xi_{0};v_{0})$ is a classical solution, i.~e. $V(\cdot) \in C^{1}([0,+\infty);\mathbb{H}) \cap C([0,+\infty);\mathbb{E})$, $V(t) \in \mathcal{D}(A)$ and $V(t)$ satisfies \eqref{EQ: LinearizedEquation} for all $t \geq 0$.
\end{lemma}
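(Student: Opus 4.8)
The plan is to observe that Lemma \ref{LEM: LinearizatinoWellPosed} is essentially a corollary of Theorem \ref{TH: DelaySemigroupTh}, so almost all of the work consists of checking that equation \eqref{EQ: LinearizedEquation} is of the admissible form \eqref{EQ: AbstractDelayHilberSpace} and verifying the two structural hypotheses \eqref{EQ: DelayLipschitz} and \textbf{(MES)} for it. First I would fix $v_0 \in \mathbb{H}$ and recall that $t \mapsto \varphi^t(v_0)$ is a continuous $\mathbb{H}$-valued function by Theorem \ref{TH: DelaySemigroupTh}; a subtlety is that $C\varphi^t(v_0)$ only makes literal sense once $\varphi^t(v_0) \in \mathbb{E}$, i.e. for $t \geq \tau$ by \textbf{(EMB)} (or for all $t \geq 0$ if $v_0 \in \mathbb{E}$). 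To handle general $v_0 \in \mathbb{H}$ I would use \textbf{(MES)} (more precisely the $L_2$-version of Lemma \ref{LEM: DelayMeasuEstimate}, which is exactly what the text flags as the tool for differentiability) to see that $t \mapsto C\varphi^t(v_0)$ is well defined as an $L_2^{loc}$-function: writing $\varphi^t(v_0) = (x(t), x_t)$ with $x \in C([-\tau,\infty);\mathbb{R}^n)$, the map $t \mapsto \widetilde C x_t$ belongs to $L_p^{loc}$ for every $p$, hence in particular we may set $a(t) := F'(C\varphi^t(v_0))$, a measurable, \emph{uniformly bounded} ($\|a(t)\| \leq \Lambda$, since $F'$ is bounded by the remark following \eqref{EQ: DelayLipschitz}) family of $m \times r$ matrices.

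Next I would define the time-dependent nonlinearity $\widetilde F(t,y) := a(t) y = F'(C\varphi^t(v_0)) y$ and check the two hypotheses of Theorem \ref{TH: DelaySemigroupTh} for the equation $\dot V = A V + B\widetilde F(t, CV)$. Condition \eqref{EQ: DelayLipschitz} holds with $\widetilde\Lambda(t) := \|a(t)\| \leq \Lambda$ because $|\widetilde F(t,y_1) - \widetilde F(t,y_2)| = |a(t)(y_1 - y_2)| \leq \|a(t)\|\,|y_1 - y_2|$, and $\widetilde\Lambda$ is bounded on compact $t$-intervals (in fact globally) as required. The operators $A$, $B$, $C$ are unchanged — they are the ones attached to \eqref{EQ: ClassicalDelayEquation} — so Lemma \ref{LEM: DelayMeasuEstimate} (with $p=1$) gives \textbf{(MES)} for exactly the same constant $M_C$. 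One more point to address: Theorem \ref{TH: DelaySemigroupTh} as stated requires $F$ (here $\widetilde F$) to be \emph{continuous} in $(t,y)$, whereas $a(t)$ is only measurable in $t$; this is what the footnote about ``obvious modifications'' is meant to absorb, but to be safe I would note that the proof of Theorem \ref{TH: DelaySemigroupTh} only ever uses continuity of $F$ through (i) the classical existence theory in $\mathbb{E}$ for smooth initial data, which for the \emph{linear} non-autonomous equation \eqref{EQ: LinearizedEquation} with $L_\infty^{loc}$ coefficients is standard Carathéodory theory, and (ii) the a priori estimate \eqref{EQ: EstimateLIP}, which only uses \eqref{EQ: DelayLipschitz} and \textbf{(MES)} and hence goes through verbatim with $\Lambda_{t_0}^T$ replaced by $\Lambda$.

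Granting these checks, Theorem \ref{TH: DelaySemigroupTh} applied to $\dot V = AV + B\widetilde F(t,CV)$ yields directly: for every $\xi_0 \in \mathbb{H}$ a unique generalized solution $V(t) = V(t;\xi_0;v_0)$ with $V(0) = \xi_0$, continuous $[0,\infty) \to \mathbb{H}$; and by property \textbf{(REG)} of that theorem, if $\xi_0 \in \mathcal{D}(A)$ then $V$ is a classical solution, $V \in C^1([0,\infty);\mathbb{H}) \cap C([0,\infty);\mathbb{E})$, $V(t) \in \mathcal{D}(A)$, satisfying \eqref{EQ: LinearizedEquation} pointwise for all $t \geq 0$. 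This is precisely the assertion of Lemma \ref{LEM: LinearizatinoWellPosed}.

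The main obstacle — and it is a bookkeeping obstacle rather than a deep one — is the mismatch between the ``$F$ continuous in $(t,y)$'' hypothesis of Theorem \ref{TH: DelaySemigroupTh} and the merely measurable-in-$t$ coefficient $a(t)$ arising here, together with making sure $C\varphi^t(v_0)$ is interpreted correctly when $v_0 \notin \mathbb{E}$ (where it exists only as an $L_2^{loc}$ object rather than pointwise). Both are resolved by appealing to the $L_2$-version of \textbf{(MES)} and by pointing out that the proof of Theorem \ref{TH: DelaySemigroupTh} is robust under relaxing continuity in $t$ to local boundedness/measurability; I would state this explicitly as the only nontrivial step and keep the rest of the argument to a couple of sentences.
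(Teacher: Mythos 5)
Your proposal follows the same route as the paper: rewrite \eqref{EQ: LinearizedEquation} as an instance of \eqref{EQ: AbstractDelayHilberSpace} with $\widetilde{F}(t,y) := F'(C\varphi^t(v_0))y$, check \eqref{EQ: DelayLipschitz} (with $\widetilde\Lambda \le \Lambda$ from boundedness of $F'$) and \textbf{(MES)} (unchanged, since $A,B,C$ are the same), and invoke Theorem~\ref{TH: DelaySemigroupTh} together with \textbf{(REG)}; the paper's justification is exactly this, stated more tersely. Your additional care about measurability of $t \mapsto C\varphi^t(v_0)$ for $v_0 \in \mathbb{H}\setminus\mathbb{E}$ on $[0,\tau)$ is a legitimate subtlety the paper glosses over (the surrounding text and later applications effectively restrict to $v_0 \in \mathbb{E}$, where $a(t)$ is continuous), and note that the paper's footnote refers only to replacing the time domain $\mathbb{R}$ by $[0,+\infty)$, not to relaxing continuity in $t$.
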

There is, in fact, continuous dependence of solutions to \eqref{EQ: LinearizedEquation} on $v_{0} \in \mathbb{E}$ that can be seen from the variation of constants formula \textbf{(VAR)} (see Lemma \ref{LEM: DelayLinearCocycle}).

Let $\mathcal{K} \subset \mathbb{H}$ be an invariant compact set, i.~e. $\varphi^{t}(\mathcal{K}) = \mathcal{K}$ for all $t \geq 0$. From the smoothing property in \textbf{(ULIP)} and the fact that a compact invariant set consists of complete trajectories we have the inclusion $\mathcal{K} \subset \mathcal{D}(A)$. Let us formulate the following property.
\begin{description}
	\item[\textbf{(MES*)}] There is a constant $M^{*}_{C}>0$ such that for all $T>0$
	\begin{equation}
	\int_{0}^{T}|Cv(t)|^{2} dt \leq M^{*}_{C} \left( | v(0) |^{2}_{\mathbb{H}} + \|v(\cdot)\|^{2}_{L_{2}(0,T;\mathbb{H})} \right).
	\end{equation}
	for all $v(\cdot) \in \mathcal{H}_{T}$. 
\end{description}
From Lemma \ref{LEM: DelayMeasuEstimate} with $p=2$ we get that \textbf{(MES*)} with $M^{*}_{C}:= (M_{\widetilde{C}})^{2}$ is satisfied for any $C$ corresponding to $\widetilde{C}$ from \eqref{EQ: ClassicalDelayEquation}.

For the proof of differentiability properties below we need a simple observation given by the following lemma.
\begin{lemma}
	\label{LEM: DelayEquationsBoundedOperatorContinuity}
	Let $Q \colon \mathbb{R}^{r} \to \mathbb{R}^{m}$ be a globally bounded continuous function and fix some $T>0$. Then the operator $\mu(\cdot) \mapsto Q(\mu(\cdot))$ is a continuous map from $L_{2}(0,T;\mathbb{R}^{r})$ to $L_{2}(0,T;\mathbb{R}^{m})$
\end{lemma}
\begin{proof}
	One can prove the continuity at every point by contradiction, combining the facts that any convergent $L_{2}$ sequence contains a subsequence, which converges almost everywhere, and the Dominated Convergence Theorem.
\end{proof}

\begin{theorem}
	\label{LEM: DelayDifferentiabilityLemma}
	Suppose that $F \in C^{1}(\mathbb{R}^{r};\mathbb{R}^{m})$ and $F'$ is globally bounded. Then for any $v_{0} \in \mathbb{H}$, any $T>0$ and any bounded in $\mathbb{H}$ subset $\mathcal{B}$ we have
	\begin{equation}
	\lim\limits_{h \to 0}\frac{|\varphi^{t}(v_{0}+h\xi)-\varphi^{t}(v_{0}) - hV(t;v_{0};\xi)|_{\mathbb{H}}}{h} \to 0,
	\end{equation}
	where the limit is uniform in $t \in [0,T]$ and $\xi \in \mathcal{B}$.
\end{theorem}
\begin{proof}
	Put $\delta(t) = \varphi^{t}(v_{0}+h\xi)-\varphi^{t}(v_{0})-V(t;v_{0};h\xi)$. We apply the variation of constants formulas (see Remark \ref{REM: VariationOfConstantsDelay}) to get the representations
	\begin{equation}
	\varphi^{t}(v_{0}+h\xi) = G(t)(v_{0}+h\xi) + \int_{0}^{t}G(t-s)BF(C\varphi^{s}(v_{0}+h\xi))ds
	\end{equation}
	and for $V(t)=V(t;v_{0};h\xi)$
	\begin{equation}
	V(t)=hG(t)\xi + \int_{0}^{t}G(t-s)BF'(C\varphi^{s}(v_{0}))CV(s)ds.
	\end{equation}
    Putting $\Delta(s):=\varphi^{s}(v_{0}+h\xi)-\varphi^{s}(v_{0})$ and using the Newton-Liebniz formula, we get
    \begin{equation}
    	\begin{split}
    		F(C\varphi^{s}(v_{0}+h\xi))-F(C\varphi^{s}v_{0}) =\\= \left( \int_{0}^{1} F'\left[C(\alpha\varphi^{s}(v_{0}+h\xi) + (1-\alpha) \varphi^{s}(v_{0}))\right] d\alpha \right) C\Delta(s).
    	\end{split}
    \end{equation}
	Putting $\mu(s) = \mu(s;v_{0},\alpha,h,\xi) := C(\alpha\varphi^{s}(v_{0}+h\xi) + (1-\alpha) \varphi^{s}(v_{0}))$ and $\mu_{0}(s)=\mu_{0}(s;v_{0}) = C\varphi^{s}(v_{0})$, we may write $\delta(t)$ as
	\begin{equation}
	\begin{split}
	\delta(t) = \int_{0}^{t}G(t-s)B \left[\int_{0}^{1}F'(\mu(s)) d\alpha C\Delta(s) - F'(C\varphi^{s}(v_{0}))CV(s)\right]ds.
	\end{split}
	\end{equation}
	Since $V(s)=\Delta(s)-\delta(s)$, we have
	\begin{equation}
	\label{EQ: DelayDiffDeltaEstimate}
	\begin{split}
	\delta(t) = \int_{0}^{t}G(t-s)B \left[\int_{0}^{1}\left(F'(\mu(s)) - F'(\mu_{0}(s)) \right)d\alpha \right] C\Delta(s)ds + \\ + \int_{0}^{t}G(t-s)BF'(C\varphi^{s}(v_{0}))C\delta(s)ds
	\end{split}
	\end{equation}
    For the first integral using Lemma \ref{LEM: DelayODEGeneratorLemma} and the H\"{o}elder inequality, for any $t \in [0,T]$ we have
    \begin{equation}
    	\label{EQ: DelayDiffEsimate1}
    	\begin{split}
    		\left| \int_{0}^{t}G(t-s)B \left[\int_{0}^{1}\left(F'(\mu(s)) - F'(\mu_{0}(s)) \right)d\alpha \right] C\Delta(s) \right| \leq \\ \leq M_{A}e^{\varkappa_{0} T} \cdot \|B\| \cdot \left(\int_{0}^{T} \int_{0}^{1}|F'(\mu(s)) - F'(\mu_{0}(s)|^{2}d\alpha ds\right)^{1/2} \times \\
    	    \times \left(\int_{0}^{T}|C\Delta(s)|^{2}ds\right)^{1/2}.
    	\end{split}
    \end{equation}
    From Lemma \ref{LEM: DelayMeasuEstimate} we have that $\mu(\cdot) \to \mu_{0}(s)$ in $L_{2}(0,T;\mathbb{R}^{r})$ as $h \to 0+$ uniformly in $\xi \in \mathcal{B}$. Now using the Tonelli theorem, we may apply Lemma \ref{LEM: DelayEquationsBoundedOperatorContinuity} to conclude that the limit
    \begin{equation}
    	R(h;\xi,v_{0}) :=  \left(\int_{0}^{T} \int_{0}^{1}|F'(\mu(s)) - F'(\mu_{0}(s)|^{2}d\alpha ds\right)^{1/2} \to 0 \text{ as } h \to 0
    \end{equation}
    is uniform in $\xi \in \mathcal{B}$.
    
    Since $\Delta(\cdot) \in \mathcal{H}^{T}_{2}$ (see Remark \ref{REM: VariationOfConstantsDelay}), from \textbf{(MES*)} and \textbf{(ULIP)} we get a constant $M'=M'(T)>0$ such that
    \begin{equation}
    	\label{EQ: DelayDiffEsimate2}
    	\left(\int_{0}^{T}|C \Delta(s) |^{2}ds\right) \leq M' |h\xi|_{\mathbb{H}}. 
    \end{equation}
    
    For the second integral in \eqref{EQ: DelayDiffDeltaEstimate} we have
    \begin{equation}
    	\label{EQ: DelayDiffEsimate3}
    	\left| \int_{0}^{t}G(t-s)BF'(C\varphi^{s}(v_{0}))C\delta(s)ds \right| \leq M_{A} e^{\varkappa_{0} T} \cdot \|B\| \cdot \int_{0}^{t}C\delta(s)ds.
    \end{equation}
    Since $\delta(0)=0$, from \textbf{(MES)} we get that $\int_{0}^{t}|C\delta(s)|ds \leq M_{\widetilde{C}}\int_{0}^{t}|\delta(s)|_{\mathbb{H}}ds$. Combining this and \eqref{EQ: DelayDiffEsimate1}, \eqref{EQ: DelayDiffEsimate2}, \eqref{EQ: DelayDiffEsimate3} with \eqref{EQ: DelayDiffDeltaEstimate}, we get for some constant $M''=M''(\mathcal{B},T,v_{0})>0$ that
    \begin{equation}
    	|\delta(t)|_{\mathbb{H}} \leq M'' \cdot R(h;\xi,v_{0}) \cdot |h \xi|_{\mathbb{H}} + M'' \cdot \int_{0}^{t}|\delta(s)|_{\mathbb{H}}ds.
    \end{equation}
    Now applying the Gronwall inequality, we finish the proof.
\end{proof}

Let $\mathcal{K}$ be a compact invariant set. As in \cite{Temam1997,ChepyzhovIlyin2004}, we say that the family $\varphi^{t}$ is \textit{quasi-differentiable} on $\mathcal{K}$ w.~r.~t. the family of quasi-differentials $L(t;v) \in \mathcal{L}(\mathbb{H})$, where $t \geq 0$ and $v \in \mathcal{K}$, if for all $t \geq 0$ 
\begin{equation}
\label{EQ: QuasiDifferentiabilityLimit}
\sup_{\substack{v_{1},v_{2} \in \mathcal{K} \\ |v_{1}-v_{2}|_{\mathbb{H}} \leq \varepsilon }}\frac{\left|\varphi^{t}(v_{2})-\varphi^{t}(v_{1})-L(t;v_{1})(v_{2}-v_{1}) \right|_{\mathbb{H}}}{\left|v_{1}-v_{2}\right|_{\mathbb{H}}} \to 0 \text{ as } \varepsilon \to 0+
\end{equation}
and the following properties are satisfied
\begin{description}
	\item[\textbf{(QD1)}] $\sup\limits_{\substack{t \in [0,1]\\v \in \mathcal{K}}}\|L(t;v)\| < \infty$;
	\item[\textbf{(QD2)}] $L(t+s;v) = L(t;\varphi^{s}(v))L(s;v)$ for all $t,s \geq 0$ and $v \in \mathcal{K}$.
\end{description}

\begin{theorem}
	\label{TH: DifferentiabilityTh}
	Suppose that $F$ is independent of $t$, $F \in C^{1}(\mathbb{R}^{r};\mathbb{R}^{m})$ and $F'$ is globally bounded. Consider a compact set $\mathcal{K} \subset \mathbb{H}$, which is invariant w.~r.~t. the semiflow $\varphi^{t}$, $t \geq 0$, given by equation \eqref{EQ: AbstractDelayHilberSpace}. Then the family $\varphi^{t}$ is quasi-differentiable w.~r.~t. the family of quasi-differentials $L(t;v)$ given by
	\begin{equation}
	\label{EQ: DelayQuasiDiffDef}
	L(t;v)\xi := V(t;v;\xi),
	\end{equation}
	where $V(t)=V(t;v_{0};\xi)$ is the solution to \eqref{EQ: LinearizedEquation} with $V(0)=\xi$. Moreover, the map $v_{0} \mapsto L(t,v_{0})$ is continuous as a map from $\mathcal{H}$ to $\mathcal{L}(\mathbb{H})$ for all $t \geq 0$.
\end{theorem}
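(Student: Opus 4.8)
The plan is to reduce the theorem to three short verifications — the limit \eqref{EQ: QuasiDifferentiabilityLimit}, property \textbf{(QD1)}, property \textbf{(QD2)} — together with the continuity of $v_{0}\mapsto L(t;v_{0})$, which I expect to be the only laborious point. First I would record two facts about $\mathcal{K}$. Since by \textbf{(EMB)} the map $\varphi^{\tau}$ is Lipschitz as a map $\mathbb{H}\to\mathbb{E}$, the set $\varphi^{\tau}(\mathcal{K})$ is compact in $\mathbb{E}$; as $\varphi^{\tau}(\mathcal{K})=\mathcal{K}$ by invariance, $\mathcal{K}$ is compact (hence bounded) in $\mathbb{E}$, and since $\mathbb{E}\hookrightarrow\mathbb{H}$ is continuous and injective, the identity is a homeomorphism between the $\mathbb{E}$- and $\mathbb{H}$-topologies on $\mathcal{K}$; in particular $C\mathcal{K}\subset\mathbb{R}^{r}$ is compact. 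Also, \eqref{EQ: LinearizedEquation} has the form \eqref{EQ: AbstractDelayHilberSpace} with nonlinearity $\widetilde{F}(t,y):=F'(C\varphi^{t}(v))y$, globally Lipschitz in $y$ with constant $\Lambda_{*}:=\sup\|F'\|$ that is independent of $t$ and of $v$; so by Lemma \ref{LEM: LinearizatinoWellPosed}, linearity of \eqref{EQ: LinearizedEquation}, and approximation by classical solutions, each $L(t;v_{0})$ is a well-defined bounded linear operator on $\mathbb{H}$, and Theorem \ref{TH: DelaySemigroupTh}\,\textbf{(ULIP)} together with Remark \ref{REM: UniformConstant} gives $\|L(t;v_{0})\|\le M_{1}(1,\Lambda_{*})e^{\varkappa(1,\Lambda_{*})}$ for all $t\in[0,1]$ and $v_{0}\in\mathcal{K}$, which is \textbf{(QD1)}.

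The limit \eqref{EQ: QuasiDifferentiabilityLimit} is then immediate from Lemma \ref{LEM: DelayDifferentiabilityLemma} applied with $\mathcal{B}=\mathcal{K}$ (admissible since $\mathcal{K}$ is bounded in $\mathbb{E}$): for fixed $t$ the numerator in \eqref{EQ: QuasiDifferentiabilityLimit} is bounded by $M_{d}|v_{1}-v_{2}|_{\mathbb{H}}^{2}$, so the whole quotient is $\le M_{d}\varepsilon\to0$. For \textbf{(QD2)} I would use uniqueness for \eqref{EQ: LinearizedEquation} (Lemma \ref{LEM: LinearizatinoWellPosed}): the time shift $r\mapsto V(s+r;v;\xi)$ solves \eqref{EQ: LinearizedEquation} along $\varphi^{\cdot}(\varphi^{s}(v))$ (because $\varphi^{s+r}(v)=\varphi^{r}(\varphi^{s}(v))$) and has value $V(s;v;\xi)$ at $r=0$, hence equals $V(r;\varphi^{s}(v);V(s;v;\xi))$, i.e. $L(s+r;v)=L(r;\varphi^{s}(v))L(s;v)$.

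The main part is the continuity of $v_{0}\mapsto L(t;v_{0})$ from $\mathcal{K}$ to $\mathcal{L}(\mathbb{H})$, which I would prove sequentially. Fix $t\ge0$ and let $v_{0}^{(k)}\to v_{0}$ in $\mathcal{K}$; by the preliminaries this also holds in $\mathbb{E}$. Since $L(t;v_{0}^{(k)})-L(t;v_{0})\in\mathcal{L}(\mathbb{H})$ and $\mathbb{E}$ is dense in $\mathbb{H}$, its operator norm equals $\sup\{|W^{(k)}(t)|_{\mathbb{H}}:\xi\in\mathbb{E},\ |\xi|_{\mathbb{H}}\le1\}$, where $W^{(k)}:=V(\cdot;v_{0}^{(k)};\xi)-V(\cdot;v_{0};\xi)$; so fix such a $\xi$. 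Then $V:=V(\cdot;v_{0};\xi)$, $V^{(k)}:=V(\cdot;v_{0}^{(k)};\xi)$ and $W^{(k)}$ are all of the form $(y(\cdot),y_{\cdot})$, so \textbf{(VAR)} and \textbf{(MES)} apply literally; subtracting the two variation-of-constants formulas (the free terms $G(t)\xi$ cancelling) gives
\[
W^{(k)}(t)=\int_{0}^{t}G(t-s)BF'(C\varphi^{s}(v_{0}^{(k)}))\,CW^{(k)}(s)\,ds+R^{(k)}(t),
\]
\[
R^{(k)}(t):=\int_{0}^{t}G(t-s)B\left(F'(C\varphi^{s}(v_{0}^{(k)}))-F'(C\varphi^{s}(v_{0}))\right)CV(s)\,ds .
\]
Bounding the first integral by \eqref{EQ: ExponentialEstimate} and \textbf{(MES)} (using $W^{(k)}(0)=0$) and applying Gronwall yields $\sup_{[0,t]}|W^{(k)}|_{\mathbb{H}}\le C_{1}\sup_{[0,t]}|R^{(k)}|_{\mathbb{H}}$; bounding $R^{(k)}$ by \eqref{EQ: ExponentialEstimate}, \textbf{(MES)} and \textbf{(ULIP)} (the latter to control $|\xi|_{\mathbb{H}}+\|V\|_{L_{1}(0,t;\mathbb{H})}$ by a constant) gives $\sup_{[0,t]}|R^{(k)}|_{\mathbb{H}}\le C_{2}\,\eta_{k}$ with $\eta_{k}:=\sup_{s\in[0,t]}\|F'(C\varphi^{s}(v_{0}^{(k)}))-F'(C\varphi^{s}(v_{0}))\|$ and $C_{1},C_{2}$ independent of $k$ and of $\xi$ with $|\xi|_{\mathbb{H}}\le1$. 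Hence $\|L(t;v_{0}^{(k)})-L(t;v_{0})\|_{\mathcal{L}(\mathbb{H})}\le C_{1}C_{2}\eta_{k}$, and it remains to show $\eta_{k}\to0$. By invariance $\varphi^{s}(v_{0}^{(k)}),\varphi^{s}(v_{0})\in\mathcal{K}$ for all $s\ge0$, so the arguments of $F'$ in $\eta_{k}$ stay in the compact set $C\mathcal{K}$, on which $F'$ (being $C^{1}$ on $\mathbb{R}^{r}$) is uniformly continuous; and by \textbf{(ULIP)} in $\mathbb{E}$, $\sup_{s\in[0,t]}|C\varphi^{s}(v_{0}^{(k)})-C\varphi^{s}(v_{0})|\le\|\widetilde{C}\|M_{1}e^{\varkappa t}\|v_{0}^{(k)}-v_{0}\|_{\mathbb{E}}\to0$. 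Therefore $\eta_{k}\to0$ and the theorem follows.

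I expect the single genuine obstacle to be this last step, and within it the two structural facts used above: that on the compact invariant set $\mathcal{K}$ the $\mathbb{H}$-topology coincides with the stronger $\mathbb{E}$-topology — so that $\mathbb{H}$-closeness of the base points transfers, via \textbf{(ULIP)} in $\mathbb{E}$, to uniform closeness of the whole trajectories and hence of the coefficients $C\varphi^{s}(\cdot)$ — and that invariance confines these coefficients to a fixed compact subset of $\mathbb{R}^{r}$ on which $F'$ is uniformly continuous. Everything else is the a priori estimate / Gronwall machinery already developed for Theorem \ref{TH: DelaySemigroupTh} and Lemma \ref{LEM: DelayDifferentiabilityLemma}.
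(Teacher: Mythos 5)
Your proof is correct and follows essentially the same route as the paper: Lemma \ref{LEM: DelayDifferentiabilityLemma} for the quasi-differentiability limit, \textbf{(ULIP)} for \textbf{(QD1)}, uniqueness for \textbf{(QD2)}, and a variation-of-constants plus Gronwall argument for the continuity of $v_{0}\mapsto L(t;v_{0})$. The one place you go beyond the paper is in spelling out why the coefficient difference $\eta_{k}$ tends to zero — namely compactness of $\mathcal{K}$ in $\mathbb{E}$, invariance confining the arguments of $F'$ to the compact $C\mathcal{K}$, uniform continuity of $F'$ there, and \textbf{(ULIP)} in the $\mathbb{E}$-norm — which the paper compresses into ``one can show that $v_{2}\to v_{1}$ implies $\delta(t)\to 0$ uniformly in $\xi$''.
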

\begin{proof}
	If the linear operator $L(t;v) \colon \mathbb{H} \to \mathbb{H}$ is defined as in \eqref{EQ: DelayQuasiDiffDef}, Theorem \ref{LEM: DelayDifferentiabilityLemma} guarantees that \eqref{EQ: QuasiDifferentiabilityLimit} is satisfied for all $t \geq 0$ since it shows that $L(t;v_{0})$ is the Fr\'{e}chet differential of $\varphi^{t}$ at $v_{0}$. From \textbf{(ULIP)} of Theorem \ref{TH: DelaySemigroupTh} it follows that $L(t;v)$ is a bounded linear operator and \textbf{(QD1)} is satisfied. The property in \textbf{(QD2)} is equivalent to 
	\begin{equation}
    V(t+s;v;\xi)=V(t;\varphi^{s}(v);V(s;v;\xi)) \text{ for all } t,s \geq 0, v \in \mathcal{K}, \xi \in \mathbb{H},
	\end{equation}
    but this is just the uniqueness of solutions to \eqref{EQ: LinearizedEquation} given by Lemma \ref{LEM: LinearizatinoWellPosed}. To show that the map $\mathbb{H} \ni v_{0} \mapsto L(t;v_{0}) \in \mathcal{L}(\mathbb{H})$ is continuous for every $t \geq 0$ we define $\delta(t) := L(t;v)\xi - L(t;v_{0})\xi = V(t;v;\xi)-V(t;v_{0};\xi)$ for some fixed $v_{0} \in \mathbb{H}$ and arbitrary $v \in \mathbb{H}$. Let also $\xi \in \mathbb{H}$ be such that $|\xi|_{\mathbb{H}} \leq 1$. Then $\delta(\cdot)$ satisfies the equation
    \begin{equation}
    \begin{split}
    \delta(t) = \int_{0}^{t}G(t-s)B(F'(C\varphi^{s}(v))CV(t;v;\xi) - F'(C\varphi^{s}(v_{0})) CV(t;v_{0};\xi))ds =\\= \int_{0}^{t}G(t-s)B(F'(C\varphi^{s}(v)) - F'(C\varphi^{s}(v_{0}))) CV(t;v_{0};\xi))ds +\\+ \int_{0}^{t}G(t-s)BF'(C\varphi^{s}(v))C\delta(s)ds.
    \end{split}
    \end{equation}
    Using the Gronwall lemma argument as in Theorem \ref{LEM: DelayDifferentiabilityLemma}, one can show that $v \to v_{0}$ implies that $\delta(t) \to 0$ uniformly in $\xi$ with $|\xi|_{\mathbb{H}} \leq 1$. But this gives the desired continuity. The proof is finished.
\end{proof}

The continuity of the map $v_{0} \to L(t;v_{0})$ plays an important role in dimension estimates. It is shown by V.V.~Chepyzhov and A.A.~Ilyin \cite{ChepyzhovIlyin2004} that this property is the only missing ingredient that makes the Hausdorff dimension estimate obtained by P.~Constantin, C.~Foias and R.~Temam \cite{Temam1997} hold for the fractal dimension also. This property is also essential for Theorem \ref{TH: MalletParetTh} below.

Note that any invariant compact $\mathcal{K}$, thanks to \textbf{(ULIP)}, must lie in $\mathbb{E}$  and the topologies of $\mathbb{E}$ and $\mathbb{H}$ on $\mathcal{K}$ coincide. Moreover, from \textbf{(ULIP)} we get that $\mathcal{K}$ is an image of itself under the Lipschitz map $\varphi^{2\tau}$. Therefore, the Hausdorff and fractal dimensions of $\mathcal{K}$ w.~r.~t. the metrics of $\mathbb{H}$ and $\mathbb{E}$ coincide. Applying a theorem of J.~Mallet-Paret \cite{MalletParet1976} (see also Section 4.6 in \cite{KuzReit2020}) to the compact Fr\'{e}chet differentiable map $\varphi^{2\tau}$ we get the following theorem.
\begin{theorem}
	\label{TH: MalletParetTh}
	Under the hypotheses of Theorem \ref{TH: DifferentiabilityTh} the Hausdorff dimension of $\mathcal{K}$ is finite.
\end{theorem}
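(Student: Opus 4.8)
The plan is to invoke the classical Mallet-Paret–Mañé–Constantin-Foias-Temam mechanism for finite-dimensionality of invariant sets under maps that are "smoothing" in the sense of being compositions of a Lipschitz map with a map whose differential decomposes into a compact part plus a small part. Since $\mathcal{K}$ is invariant, $\mathcal{K} = \varphi^{t}(\mathcal{K})$ for every $t \geq 0$; by \textbf{(EMB)} and \textbf{(COM)} of Theorem \ref{TH: DelaySemigroupTh}, the time-$2\tau$ map $S := \varphi^{2\tau}$ restricts to a Lipschitz, compact map of $\mathcal{K}$ onto itself. By Theorem \ref{TH: DifferentiabilityTh} the semi-flow is quasi-differentiable on $\mathcal{K}$ with quasi-differentials $L(t;v)$ depending continuously on $v$, and $L(2\tau;v) = V(2\tau;v;\cdot)$. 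So the whole problem reduces to showing that $L(2\tau;v) \in \mathcal{L}(\mathbb{H})$, uniformly in $v \in \mathcal{K}$, is a compact operator — or, more precisely, that it admits for every $\varepsilon>0$ a splitting $L(2\tau;v) = K_{v} + R_{v}$ with $K_{v}$ uniformly compact (image in a fixed compact set, up to scaling) and $\|R_{v}\| < \varepsilon$. Indeed, given such a splitting, the standard covering argument (see Mallet-Paret \cite{MalletParet1976}, or the abstract version in Temam \cite{Temam1997}) bounds the number of $\varepsilon$-balls needed to cover the image of a ball under $L(2\tau;v)$ by a quantity independent of $v$, and the quasi-differentiability limit \eqref{EQ: QuasiDifferentiabilityLimit} transfers this covering bound to $S$ itself on $\mathcal{K}$; iterating $S$ on the invariant set $\mathcal{K}$ then yields $\dim_{H}\mathcal{K} < \infty$.

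The key step is therefore the decomposition of $L(2\tau;v)$. I would get it directly from the variation of constants representation of the linearized flow: for $\xi \in \mathbb{H}$,
\begin{equation}
V(2\tau;v;\xi) = G(2\tau)\xi + \int_{0}^{2\tau} G(2\tau - s) B F'(C\varphi^{s}(v))\, C V(s;v;\xi)\, ds.
\end{equation}
The integral term defines the operator $K_{v}\xi$; since $C$ has finite-dimensional range, $B$ is bounded, $F'$ is bounded, and $s \mapsto CV(s;v;\xi)$ is continuous with norm controlled by \textbf{(ULIP)} (applied to the linear equation \eqref{EQ: LinearizedEquation} as noted in Lemma \ref{LEM: LinearizatinoWellPosed}), the map $\xi \mapsto K_{v}\xi$ factors through the finite-dimensional space $\mathbb{R}^{r}$ pointwise in $s$ and, after integration against the strongly continuous family $G(2\tau-s)B$, is compact; uniform compactness over $v \in \mathcal{K}$ follows from the continuity of $v \mapsto L(\cdot;v)$ together with compactness of $\mathcal{K}$. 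The remainder is $R_{v}\xi = G(2\tau)\xi$, which is independent of $v$ and, crucially, is itself a compact operator on $\mathbb{H}$ by \textbf{(COM)} (the linear semigroup $G(t)$ is the quasi-differential of the linear flow, so $G(2\tau)$ is compact — alternatively one checks $G(t)$ is compact for $t \geq 2\tau$ directly from the structure of $A$). Hence $L(2\tau;v)$ is actually compact, uniformly in $v$, which is even stronger than the $K_{v}+R_{v}$ splitting needed.

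The main obstacle I anticipate is purely bookkeeping: making the "uniform compactness in $v$" precise and checking that the covering/iteration argument is legitimately applicable with the quasi-differentials rather than genuine Fréchet derivatives. This is exactly the point where the continuity of $v \mapsto L(t;v)$ established at the end of Theorem \ref{TH: DifferentiabilityTh} is used — it guarantees that the family $\{L(2\tau;v)\}_{v\in\mathcal{K}}$ is a norm-compact subset of $\mathcal{L}(\mathbb{H})$ consisting of compact operators, so a single integer $N = N(\varepsilon)$ suffices for all $v$ simultaneously. Once that is in place, the rest is the verbatim argument of \cite{MalletParet1976}: cover $\mathcal{K}$ by balls, push forward by $S$ using quasi-differentiability plus the uniform covering bound for the $L(2\tau;v)$, deduce a recursive inequality for the covering numbers of $\mathcal{K}$ that forces a finite box-counting (hence Hausdorff) dimension. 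I would simply cite \cite{MalletParet1976,Temam1997} for this final combinatorial step rather than reproduce it.
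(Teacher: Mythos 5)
Your proposal captures the same three ingredients the paper relies on — quasi-differentiability of $\varphi^{t}$ on $\mathcal{K}$ from Theorem~\ref{TH: DifferentiabilityTh}, compactness of the quasi-differentials $L(t;v)$ for $t\geq 2\tau$, and continuity of $v\mapsto L(t;v)$ — and then hands the combinatorial covering argument off to \cite{MalletParet1976,Temam1997}. That is precisely the route the paper indicates (the paper itself only sketches the proof and defers details to \cite{KuzReit2020}). The one place where you diverge is in \emph{how} you establish compactness of $L(2\tau;v)$. The paper's observation is simpler and avoids the bookkeeping you flag: the linearized equation~\eqref{EQ: LinearizedEquation} is itself a delay equation of exactly the form~\eqref{EQ: AbstractDelayHilberSpace}, with $F$ replaced by $\widetilde{F}(t,y)=F'(C\varphi^{t}(v_{0}))y$, which satisfies~\eqref{EQ: DelayLipschitz} uniformly because $F'$ is bounded. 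Hence Theorem~\ref{TH: DelaySemigroupTh} applies verbatim to~\eqref{EQ: LinearizedEquation}, and property~\textbf{(COM)} immediately gives compactness of its solving operators $L(t;v)\colon\mathbb{H}\to\mathbb{H}$ for $t\geq 2\tau$, with no decomposition $K_{v}+R_{v}$ needed.

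Your alternative route through the variation-of-constants splitting $L(2\tau;v)=G(2\tau)+K_{v}$ reaches the same endpoint but at greater cost. The assertion that $K_{v}$ is compact because the integrand ``factors through $\mathbb{R}^{r}$ pointwise in $s$'' needs more care than the sentence suggests: an integral of finite-rank operators is not automatically compact, and for $s<\tau$ the solution $V(s;v;\xi)$ need not lie in the domain of the unbounded operator $C$, so $CV(s;v;\xi)$ only makes sense through the $L^{1}$-averaged estimate~\textbf{(MES)}, not pointwise. One would have to split the time interval at $s=\tau$, use~\textbf{(EMB)} on $[\tau,2\tau]$, and handle the tail via a Riemann-sum approximation in operator norm — all of which is superfluous once one notices that~\textbf{(COM)} already applies to the linearized solving operator. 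So: same architecture as the paper, with an avoidable detour in the compactness step; the rest of your outline (continuity of $v\mapsto L(t;v)$ giving a uniform covering bound over $\mathcal{K}$, iteration of the covering bound on the invariant set) matches the paper's intent.
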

Moreover, from \cite{ChepyzhovIlyin2004} we can show finiteness of the fractal dimension that is not less than the Hausdorff dimension (see Remark 3.6 in \cite{KuzReit2020}). For this we need some preparations.

For a compact linear operator $L \colon \mathbb{H} \to \mathbb{H}$ let $\alpha_{1} \geq \alpha_{2} \geq \alpha_{3} \geq \ldots$ be its singular values. For $d = k + s$, where $k$ is a non-negative integer and $s \in (0, 1]$, we consider the singular value function of $L$ defined as
\begin{equation}
\omega_{d}(L):=\alpha_{1} \cdot \ldots \cdot \alpha_{k} \cdot \alpha^{s}_{k+1} = \omega^{1-s}_{k}\omega^{s}_{k+1}.
\end{equation}

From Theorem 2.1 in \cite{ChepyzhovIlyin2004} applied to the map $\varphi^{t}$ for some fixed $t \geq 2\tau$ we get the following theorem.
\begin{theorem}
	\label{TH: ChepyzhovIlyinCor}
	Under the hypotheses of Theorem \ref{TH: DifferentiabilityTh} suppose that for some $t \geq 2 \tau$ and $d>0$ we have
	\begin{equation}
	\label{EQ: SingularValuesSqueezing}
	\sup\limits_{v \in \mathcal{K}}\omega_{d}(L(t;v)) < 1.
	\end{equation}
	Then for the fractal dimension of $\mathcal{K}$ in $\mathbb{H}$ we have $\operatorname{dim}_{F}\mathcal{K} < d$.
\end{theorem}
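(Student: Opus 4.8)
The plan is to check that, for the fixed $t_{0} \geq 2\tau$ provided by the hypothesis, the map $\varphi^{t_{0}}$ restricted to $\mathcal{K}$ satisfies all the assumptions of Theorem 2.1 in \cite{ChepyzhovIlyin2004}, and then to invoke that theorem verbatim. First I would recall that $\mathcal{K}$ is $\varphi^{t}$-invariant, so $\varphi^{t_{0}}$ maps the compact set $\mathcal{K}$ onto itself; by the remark preceding the present theorem, $\mathcal{K} \subset \mathbb{E}$ and the $\mathbb{H}$- and $\mathbb{E}$-topologies (hence the fractal dimensions computed with either metric) agree on $\mathcal{K}$, so it is enough to work with the $\mathbb{H}$-metric.

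Next I would assemble the structural properties of $\varphi^{t_{0}}$ that the abstract theorem needs. By \textbf{(ULIP)} the map $\varphi^{t_{0}}$ is Lipschitz on $\mathbb{H}$, and by Theorem \ref{TH: DifferentiabilityTh} it is quasi-differentiable on $\mathcal{K}$ in the sense of \eqref{EQ: QuasiDifferentiabilityLimit} with quasi-differentials $L(t_{0};v)$ satisfying \textbf{(QD1)}, \textbf{(QD2)} and — crucially — depending continuously on $v \in \mathcal{K}$ as elements of $\mathcal{L}(\mathbb{H})$; this continuity is exactly the ingredient which, as noted after Theorem \ref{TH: DifferentiabilityTh}, upgrades the Constantin--Foias--Temam Hausdorff-dimension argument of \cite{Temam1997} to a bound on the fractal dimension. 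Moreover $L(t_{0};v)$ is a compact operator: since $\xi \mapsto V(t;v;\xi)$ is the solving family of the linear equation \eqref{EQ: LinearizedEquation}, which has the form \eqref{EQ: AbstractDelayHilberSpace} with a bounded coefficient, property \textbf{(COM)} applied to that equation yields compactness of $U(t_{0},0) = L(t_{0};v)$ for $t_{0} \geq 2\tau$. Hence the singular values $\alpha_{1}(L(t_{0};v)) \geq \alpha_{2}(L(t_{0};v)) \geq \dots$ tend to zero, the singular value function $\omega_{d}(L(t_{0};v))$ is well defined, and the hypothesis $\sup_{v \in \mathcal{K}} \omega_{d}(L(t_{0};v)) < 1$ is precisely the uniform contraction condition in that theorem. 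Finiteness of $\dim_{F}\mathcal{K}$ is already guaranteed by Theorem \ref{TH: MalletParetTh} (and its fractal refinement), so the conclusion is not vacuous. Applying Theorem 2.1 of \cite{ChepyzhovIlyin2004} gives $\dim_{F}\mathcal{K} \leq d$.

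\textbf{Main obstacle.} The only genuine work is the bookkeeping matching our objects to the hypotheses of \cite{ChepyzhovIlyin2004}: confirming that quasi-differentiability in the sense of \eqref{EQ: QuasiDifferentiabilityLimit} together with \textbf{(QD1)}--\textbf{(QD2)}, compactness of the $L(t_{0};v)$, and their continuous dependence on $v$ are exactly what that theorem uses, so that no honest Fréchet differentiability of $\varphi^{t_{0}}$ is required and no a priori dimension estimate beyond Theorem \ref{TH: MalletParetTh} is needed. Once this identification is made the proof reduces to a citation.
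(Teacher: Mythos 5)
Your proposal is correct and matches the paper's own proof, which is precisely a one-line citation to Theorem 2.1 of Chepyzhov--Ilyin applied to $\varphi^{t}$; you have simply made explicit the bookkeeping (invariance and compactness of $\mathcal{K}$, Lipschitz continuity from \textbf{(ULIP)}, quasi-differentiability with \textbf{(QD1)}--\textbf{(QD2)} and continuity of $v\mapsto L(t;v)$ from Theorem \ref{TH: DifferentiabilityTh}, and compactness of $L(t;v)$ for $t\ge 2\tau$) that the paper takes as already established. The aside about Theorem \ref{TH: MalletParetTh} guaranteeing finiteness is superfluous — Theorem 2.1 of \cite{ChepyzhovIlyin2004} does not require an a priori dimension bound — but it does no harm.
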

The main result of \cite{ChepyzhovIlyin2004} and the above definitions can be also formulated for the case of a non-compact operator $L$ \cite{Temam1997}. In our situation, the compactness of $L(t;v)$ for $t \geq 2 \tau$ allows us to stay in the compact context, which is simpler. Since $L(t;v)$ is a compact operator for $t \geq 2\tau$, its singular values tend to zero and therefore $\omega_{d}(L(t;v)) < 1$ is satisfied for all sufficiently large $d > 0$. Using the compactness of $\mathcal{K}$ and the continuity of $L(t,v)$ (and, consequently, $\omega_{d}(L(t;v))$) w.~r.~t. $v \in \mathcal{K}$, one can show that for every $t\geq 2\tau$ the inequality in \eqref{EQ: SingularValuesSqueezing} is satisfied for some sufficiently large $d > 0$. Thus, we immediately have the following theorem which strengthens Theorem \ref{TH: MalletParetTh}.
\begin{theorem}
	\label{TH: ChepyzhovIliynCor}
	Under the hypotheses of Theorem \ref{TH: DifferentiabilityTh} the fractal dimension of $\mathcal{K}$ is finite.
\end{theorem}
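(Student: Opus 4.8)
The plan is to reduce the statement to Theorem~\ref{TH: ChepyzhovIlyinCor} by verifying the squeezing inequality \eqref{EQ: SingularValuesSqueezing} for a suitable time $t$ and dimension $d$. Fix $t = 2\tau$ (any $t \geq 2\tau$ works equally). First I would record two facts about the quasi-differentials $L(t;v)$, $v\in\mathcal{K}$. (i) Each $L(t;v)$ is a \emph{compact} operator on $\mathbb{H}$ for $t \geq 2\tau$: indeed $L(t;v)\xi = V(t;v;\xi)$ is the value at time $t$ of the solving operator of the linear equation~\eqref{EQ: LinearizedEquation}, which is of the form~\eqref{EQ: AbstractDelayHilberSpace} (with $F$ replaced by a globally Lipschitz linear-in-$y$ nonlinearity), so \textbf{(COM)} of Theorem~\ref{TH: DelaySemigroupTh} applies. (ii) By \textbf{(ULIP)} applied to~\eqref{EQ: LinearizedEquation}, whose Lipschitz constant is bounded by $\sup\|F'\|$ uniformly in the base point, there is $M>0$ with $\|L(t;v)\| \leq M$ for all $v\in\mathcal{K}$.

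Next I would upgrade the pointwise decay of singular values to a decay that is uniform over $\mathcal{K}$. By Theorem~\ref{TH: DifferentiabilityTh} the map $\mathcal{K}\ni v\mapsto L(t;v)\in\mathcal{L}(\mathbb{H})$ is continuous, so its image $\mathcal{F}:=\{L(t;v):v\in\mathcal{K}\}$ is a compact subset of the space of compact operators in the operator-norm topology. Using that singular values are $1$-Lipschitz in the operator norm, $|\alpha_n(L)-\alpha_n(L')|\leq\|L-L'\|$, a standard subsequence-and-contradiction argument shows $\beta_n := \sup_{L\in\mathcal{F}}\alpha_n(L) \to 0$ as $n\to\infty$. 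Since $\alpha_n(L(t;v)) \leq \beta_n$ for every $v$ and $\beta_1 \leq M$, for $d = k+s$ one gets $\omega_d(L(t;v)) = \alpha_1\cdots\alpha_k\,\alpha_{k+1}^{s} \leq \beta_1\cdots\beta_k\,\beta_{k+1}^{s}$; choosing $k_0$ with $\beta_n\leq 1/2$ for $n > k_0$ yields $\omega_d(L(t;v)) \leq M^{k_0}(1/2)^{k-k_0}$, which tends to $0$ as $d\to\infty$ uniformly in $v\in\mathcal{K}$. Hence there is $d^*>0$ with $\sup_{v\in\mathcal{K}}\omega_{d^*}(L(t;v)) < 1$, and Theorem~\ref{TH: ChepyzhovIlyinCor} then gives $\operatorname{dim}_{F}\mathcal{K}\leq d^* < \infty$.

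I expect the main obstacle to be precisely the passage from ``for each $v$ the singular values of $L(t;v)$ decay, so $\omega_d(L(t;v))<1$ for $d$ large depending on $v$'' to a \emph{single} $d$ valid simultaneously for all $v\in\mathcal{K}$. A naive covering argument is delicate, since $\omega_d(L)$ need not be monotone in $d$ unless the tail singular values are already below $1$. The clean fix is the uniform singular-value decay $\beta_n\to0$ over the compact family $\mathcal{F}$, which rests exactly on the continuity of $v\mapsto L(t;v)$ from Theorem~\ref{TH: DifferentiabilityTh} together with the Lipschitz dependence of singular values on the operator; everything else is routine bookkeeping. (Alternatively one may run the covering argument directly, but then the local exponents must be chosen large enough that all singular values past that index are $<1$, which again amounts to the same uniform-decay statement.)
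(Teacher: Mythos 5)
Your proof follows the same route as the paper: compactness of $L(t;v)$ for $t\geq 2\tau$ makes the singular values decay pointwise, and compactness of $\mathcal{K}$ together with the continuity of $v\mapsto L(t;v)$ from Theorem~\ref{TH: DifferentiabilityTh} upgrades this to a single $d$ with $\sup_{v\in\mathcal{K}}\omega_d(L(t;v))<1$, at which point Theorem~\ref{TH: ChepyzhovIlyinCor} finishes. The paper leaves the uniformization step at the level of ``one can show,'' and your argument via the norm-compact image $\mathcal{F}$ and the $1$-Lipschitz dependence of singular values on the operator, giving $\beta_n=\sup_{\mathcal{F}}\alpha_n\to 0$, is a clean and correct way to fill it in; your observation that $\omega_d$ is not monotone in $d$ until the tail singular values drop below $1$ is exactly the reason a naive pointwise-$d$-then-cover argument would need that extra care.
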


In particular, from Theorem \ref{TH: ChepyzhovIliynCor} and the Lipschitz property of the semiflow we immediately have finiteness of the topological entropy of $\varphi^{t}$ restricted to $\mathcal{K}$ (see, for example, \cite{KuzReit2020}).

It is interesting to obtain effective dimension estimates and this seems to be a nontrivial problem. Let us start a discussion by formulating the following analog of the Liouville trace formula for \eqref{EQ: LinearizedEquation}.
\begin{lemma}
	\label{LEM: TraceFormulaDelay}
	Under the hypotheses of Theorem \ref{TH: DifferentiabilityTh} let $k > 0$ be an integer and $v_{0} \in \mathbb{H}$ be fixed. Then for any $\xi_{j} \in \mathcal{D}(A)$ and $V_{j}(t)=V(t;v_{0},\xi_{j})$, where $j = 1,\ldots, k$ and $t \geq 0$, we have
	\begin{equation}
	\label{EQ: TraceFormula}
	\begin{split}
		\left| V_{1}(t) \wedge \ldots \wedge V_{k}(t) \right|_{\bigwedge^{k}\mathbb{H}} =\\= \left| \xi_{1} \wedge \ldots \wedge \xi_{k} \right|_{\bigwedge^{k}\mathbb{H}} \operatorname{exp}\left( \int_{0}^{t} \operatorname{Tr}( (A+BF'(C\varphi^{s}(v_{0}) )C) \circ \Pi(s)) ds \right),
	\end{split}
	\end{equation}
	where $\Pi(s)$ is the orthogonal projector onto $\operatorname{Span}( V_{1}(s),\ldots,V_{k}(s))$.
\end{lemma}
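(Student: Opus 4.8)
The plan is to reduce \eqref{EQ: TraceFormula} to a classical finite-dimensional Liouville formula for the evolution of a $k$-dimensional frame under a linear (non-autonomous) equation, applied in the subspace $\mathcal{D}(A)$. First I would fix $v_0$ and abbreviate $A(s) := A + BF'(C\varphi^s(v_0))C$, so that the $V_j(t) = V(t;v_0,\xi_j)$ are classical solutions of $\dot V = A(s)V$ by Lemma \ref{LEM: LinearizatinoWellPosed} (since $\xi_j \in \mathcal{D}(A)$), all lying in $\mathcal{D}(A)$ for every $t \geq 0$. Because equation \eqref{EQ: LinearizedEquation} is linear and its solving operator is, by \textbf{(ULIP)}, a bounded linear family on $\mathbb{H}$, the function $t \mapsto V_1(t) \wedge \ldots \wedge V_k(t)$ is a $C^1$ curve in $\bigwedge^k \mathbb{H}$ (one differentiates the multilinear wedge and uses $\dot V_j = A(s)V_j$), and it vanishes identically in $t$ iff $\xi_1,\ldots,\xi_k$ are linearly dependent; in that degenerate case both sides of \eqref{EQ: TraceFormula} are zero and there is nothing to prove. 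So I may assume the $\xi_j$, hence the $V_j(t)$, are linearly independent for all $t$ in the relevant interval.

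Next, the standard computation: writing $\Phi(t) := V_1(t)\wedge\ldots\wedge V_k(t) \in \bigwedge^k\mathbb{H}$ and differentiating,
\begin{equation}
\dot\Phi(t) = \sum_{j=1}^{k} V_1(t)\wedge\ldots\wedge A(t)V_j(t)\wedge\ldots\wedge V_k(t).
\end{equation}
The key linear-algebra identity is that for any bounded operator $L$ on a Hilbert space and any linearly independent $V_1,\ldots,V_k$ spanning a subspace with orthogonal projector $\Pi$, one has
\begin{equation}
\sum_{j=1}^{k} V_1\wedge\ldots\wedge LV_j\wedge\ldots\wedge V_k = \operatorname{Tr}(\Pi L \Pi)\, V_1\wedge\ldots\wedge V_k = \operatorname{Tr}(L\circ\Pi)\, V_1\wedge\ldots\wedge V_k,
\end{equation}
which is verified by expressing everything in an orthonormal basis $e_1,\ldots,e_k$ of $\operatorname{Span}(V_1,\ldots,V_k)$ (the components of $LV_j$ orthogonal to the span contribute nothing to the wedge). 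Here one should note $A(t)$ itself is unbounded, but since the $V_j(t)$ and their span lie in $\mathcal{D}(A)$, the expressions $A(t)V_j(t)$ are well-defined elements of $\mathbb{H}$ and $\operatorname{Tr}(A(t)\circ\Pi(t))$ is a finite sum $\sum_{i=1}^k (A(t)e_i, e_i)$ over an orthonormal basis of the finite-dimensional subspace $\Pi(t)\mathbb{H} \subset \mathcal{D}(A)$, so the identity goes through verbatim. Combining, $\dot\Phi(t) = \operatorname{Tr}(A(t)\circ\Pi(t))\,\Phi(t)$, a scalar linear ODE for $|\Phi(t)|_{\bigwedge^k\mathbb{H}}$ after noting $\frac{d}{dt}|\Phi|^2 = 2(\dot\Phi,\Phi) = 2\operatorname{Tr}(A(t)\circ\Pi(t))|\Phi|^2$; integrating from $0$ to $t$ and taking square roots yields \eqref{EQ: TraceFormula}.

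The main obstacle I anticipate is purely technical: justifying the differentiation of $t \mapsto \Phi(t)$ and the continuity of $t \mapsto \operatorname{Tr}(A(t)\circ\Pi(t))$ rigorously in the infinite-dimensional, partly unbounded setting. For differentiability one uses that each $V_j(\cdot) \in C^1([0,+\infty);\mathbb{H})$ (classical solutions, by Lemma \ref{LEM: LinearizatinoWellPosed}) together with continuity of the wedge map $\mathbb{H}^k \to \bigwedge^k\mathbb{H}$; for the trace term one needs that $\Pi(t)$ depends continuously on $t$ in operator norm (which follows from continuity of $t \mapsto V_j(t)$ in $\mathbb{H}$ and linear independence) and that $A(t)V_j(t)$ depends continuously on $t$ in $\mathbb{H}$ (which follows from $\dot V_j(t) = A(t)V_j(t)$ and $V_j \in C^1$). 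A secondary subtlety worth a remark is that $\operatorname{Tr}(A\circ\Pi)$ is the trace of the finite-rank operator $A\circ\Pi$, equivalently $\operatorname{Tr}(\Pi A \Pi)$, which is unambiguous precisely because $\operatorname{ran}\Pi(s) \subset \mathcal{D}(A)$; I would state this explicitly to make sense of the right-hand side of \eqref{EQ: TraceFormula}. Everything else is the classical Liouville argument carried out one dimension of the exterior power at a time.
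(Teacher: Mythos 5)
The paper itself does not give a detailed proof of this lemma; after the three preparatory observations (that $V_j(t)\in\mathcal{D}(A)$, that the operator under $\operatorname{Tr}$ is therefore finite-rank and continuous in $s$, and that the trace is hence well-defined), it simply defers to the standard Liouville argument in Temam's book. You are thus supplying exactly the details the paper omits, and your overall structure is the correct one.

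However, the central ``key linear-algebra identity'' you invoke is false as stated. For $k=1$ it asserts $Lv = \operatorname{Tr}(L\circ\Pi)v$, which fails whenever $Lv$ has a component orthogonal to $v$. More generally, the vector
\begin{equation}
\sum_{j=1}^{k} V_1\wedge\ldots\wedge LV_j\wedge\ldots\wedge V_k
\end{equation}
is \emph{not} a scalar multiple of $V_1\wedge\ldots\wedge V_k$: the components of $LV_j$ orthogonal to $\operatorname{Span}(V_1,\ldots,V_k)$ do contribute to the wedge, producing a nonzero vector in $\bigwedge^k\mathbb{H}$ that is merely orthogonal to $V_1\wedge\ldots\wedge V_k$. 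Consequently the displayed ``scalar ODE'' $\dot\Phi(t)=\operatorname{Tr}(A(t)\circ\Pi(t))\,\Phi(t)$ is not correct. What is true, and what the Liouville argument actually requires, is the scalar relation
\begin{equation}
\Bigl\langle \sum_{j=1}^{k} V_1\wedge\ldots\wedge LV_j\wedge\ldots\wedge V_k,\; V_1\wedge\ldots\wedge V_k \Bigr\rangle_{\bigwedge^k\mathbb{H}}
= \operatorname{Tr}(L\circ\Pi)\,\left|V_1\wedge\ldots\wedge V_k\right|^2_{\bigwedge^k\mathbb{H}},
\end{equation}
which holds because in the Gram determinant defining the inner product only the $j$-th row is altered, and there $(LV_j,V_l)=(\Pi LV_j,V_l)$ for every $l$ since each $V_l$ lies in the range of $\Pi$. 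So you should replace the claimed equality of vectors in $\bigwedge^k\mathbb{H}$ by this equality of scalars, obtain directly $\langle\dot\Phi,\Phi\rangle=\operatorname{Tr}(A(t)\circ\Pi(t))\,|\Phi|^2$, and then $\tfrac{d}{dt}|\Phi|^2=2\operatorname{Tr}(A(t)\circ\Pi(t))\,|\Phi|^2$ as you wrote. Your concluding integration, your handling of the degenerate (linearly dependent) case, and the technical remarks about $C^1$-differentiability in $t$ and continuity of $\Pi(t)$ and of $A(t)V_j(t)=\dot V_j(t)$ are all fine; only the intermediate vector identity needs to be demoted to the inner-product identity.
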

\noindent Here $\bigwedge^{k}\mathbb{H}$ denotes the $k$-th exterior power of $\mathbb{H}$ endowed with the inner product $|\cdot|_{\bigwedge^{k}\mathbb{H}}$, which is given for $\eta_{1} \wedge \ldots \wedge \eta_{k} \in \bigwedge^{k}\mathbb{H}$ by the $k$-dimensional volume of the parallelepiped with edges $\eta_{1},\ldots,\eta_{k}$ (for a more precise treatment see, for example, \cite{Temam1997}). Since $\xi_{j} \in \mathcal{D}(A)$, we have that $V_{j}(\cdot)$ is a classical solution of \eqref{EQ: LinearizedEquation} and, in particular, $V_{j}(t) \in \mathcal{D}(A)$ for all $t \geq 0$. Thus, the operator under $\operatorname{Tr}$ is bounded, has finite-dimensional range and continuously depend on $s$. In particular, its trace is well-defined. After these explanations Lemma \ref{LEM: TraceFormulaDelay} can be proved in a standard way (see, for example, Subsection 2.3 of Chapter V from \cite{Temam1997}).

As above, let $L \colon \mathbb{H} \to \mathbb{H}$ be a compact operator. It is well-known (see Proposition 1.4 from Subsection 1.3 of Chapter V in \cite{Temam1997}) that for any integer $k>0$ we have
\begin{equation}
\label{EQ: SingularValuesFunctionAsNorm}
\omega_{k}(L) = \sup\limits_{\substack{\xi_{1},\ldots,\xi_{k} \in \mathbb{H} \\ |\xi_{j}|_{\mathbb{H}} \leq 1 \forall j }} \left| L\xi_{1} \wedge \ldots \wedge L\xi_{k} \right|_{\bigwedge^{k}\mathbb{H}} = \sup\limits_{\substack{\xi_{1},\ldots,\xi_{k} \in \mathcal{D}(A) \\ |\xi_{j}|_{\mathbb{H}} \leq 1 \forall j }} \left| L\xi_{1} \wedge \ldots \wedge L\xi_{k} \right|_{\bigwedge^{k}\mathbb{H}},
\end{equation}
where the last inequality is due to the density of $\mathcal{D}(A)$ in $\mathbb{H}$.

Using \eqref{EQ: SingularValuesFunctionAsNorm} and Lemma \ref{LEM: TraceFormulaDelay}, one usually provides some estimates for the trace of the operator in \eqref{EQ: TraceFormula}, which makes it possible to apply Theorem \ref{TH: ChepyzhovIlyinCor}. In the case of parabolic problems there is a general estimate for the trace of a positive self-adjoint operator (see Chapter VI in \cite{Temam1997} or Lemma 4.21 in \cite{CarvalhoLangaRobinson2012}), which in some cases (for example, for reaction-diffusion equations) overrides the effect of the nonlineary (that is $F$ in our case) and makes it possible to obtain effective dimension estimates. Linear operators corresponding to delay equations are not self-adjoint and thus it is not obvious how one may derive effective dimension estimates with the use of Theorem \ref{TH: ChepyzhovIlyinCor} in the general case. As we will see below, a naive approach concerned with the study of the symmetrized operator $(A+A^{*}) / 2$ does not work here, although it has proved to be effective for ODEs (see \cite{KuzReit2020}). We are familiar only with the paper of J.W-H.~So and J.~Wu \cite{SoWu1991}, where dimension estimates for delay equations through the trace estimates are considered (in \cite{SoWu1991} a class of parabolic equations with delay is studied). However, \cite{SoWu1991} ends with an abstract application of the Constantin-Foias-Temam estimate (in the same way we did in Theorem \ref{TH: ChepyzhovIliynCor}) and, as it is said in the introduction, its concrete realizations are left for forthcoming papers, but none of such papers seems to be appeared. Another example is the monograph of A.N.~Carvalho, J.A.~Langa and J.C.~Robinson \cite{CarvalhoLangaRobinson2012}, which in particular treats dimension estimates, contains a chapter devoted to delay equations, but there are no discussions concerned with effective dimension estimates for such equations. Thus, we consider it important to draw attention to the following problem.
\begin{problem}
	\label{PROB: ProblemEffectiveDimEstDelay}
	How to obtain effective dimension estimates for delay equations?
\end{problem}
A partial solution to the above problem can be given by construction of inertial manifolds \cite{Anikushin2020Geom,Anikushin2020Red,Chicone2003,Temam1997}. However, the existence of an inertial manifold only gives an integer estimate given by its dimension and such an estimate is too rough to apply, for example, some criteria of non-existence of periodic orbits based on dimension estimates (see R.A.~Smith \cite{Smith1986HD}; M.Y.~Li and J.S.~Muldowney \cite{LiMuldowney1995}). Such criteria can be mixed with various developments of the Poincar\'{e}-Bendixson theory for infinite-dimensional dynamical systems (see \cite{Anikushin2020Geom,MalletParetSell1996}) to obtain convergence properties. On the other hand, the existence of smooth inertial manifolds at least theoretically leads to a certain ODE given by a smooth vector field (the so-called inertial form), which describes dynamics on the manifold. The linearized vector field can be symmetrized to obtain effective dimension estimates. To the best of our knowledge, the only case, to which this approach can be efficiently applied, is given by delay equations with small delays, for which C.~Chicone in \cite{Chicone2003} obtained an expansion for the vector field by the delay value considered as a small parameter.

Note that in the case of ODEs there is an effective approach called the Leonov method (see N.V.~Kuznetsov \cite{Kuznetsov2016LeonovMethod}; G.A.~Leonov et al. \cite{LeonovetalExactDim2016}; N.V.~Kuznetsov and V.~Reitmann \cite{KuzReit2020}) concerned with changes of the metric tensor via special Lyapunov-like functions. Sometimes it allows to obtain sharp (or even exact \cite{LeonovetalExactDim2016}) estimates for the Lyapunov dimension (and, consequently, for the Hausdorff and fractal dimensions) on the global attractor. To the best of our knowledge, there is still no efficient applications of the Leonov method for infinite-dimensional systems. Its development with the synthesis of the above mentioned criteria for the nonexistence of periodic orbits \cite{Smith1986HD,LiMuldowney1995} has particular interest.

Let $A \colon \mathcal{D}(A) \subset \mathbb{H} \to \mathbb{H}$ be a closed linear operator. Let $\mathbb{L} \subset \mathcal{D}(A)$ be a finite dimensional subspace and $\Pi_{\mathbb{L}}$ be the orthogonal (in $\mathbb{H}$) projector onto $\mathbb{L}$. The \textit{trace} of $A$ on $\mathbb{L}$ is the value $\operatorname{Tr}(A \circ \Pi_{\mathbb{L}})$. Let us define the \textit{trace numbers} $\beta_{1}(A),\beta_{2}(A),\ldots$ by induction from the relations
\begin{equation}
\label{EQ: OperatorTraceBetaNumbers}
\beta_{1}(A)+\ldots+\beta_{k}(A) = \sup_{\substack{\mathbb{L} \subset \mathcal{D}(A)\\ \operatorname{dim}\mathbb{L}=k}} \operatorname{Tr}(A \circ \Pi_{\mathbb{L}}),
\end{equation}
where $k=1,2,\ldots$. In Theorem \ref{TH: TraceNumbersTheorem} below the trace numbers are related to the eigenvalues of a self-adjoint extension $S$ of the symmetrized operator $S_{0} := (A+A^{*})/2$. In the case of delay equations such operators as $S$ are defined everywhere in $\mathbb{H}$ and have null spaces of finite codimension. By considering some simplest cases, one can see that the trace numbers do not depend on the delay value and their sum can never be negative that makes them inappropriate for studying delay equations.

Let us illustrate the problem by means of the following example. We consider for $\alpha \in (0,1)$ and $\tau>0$ the operator $A$ in $\mathbb{H} = \mathbb{R} \times L_{2}(-\tau,0;\mathbb{R})$ given by
\begin{equation}
\label{EQ: ExampleOperatorBetaNumbers}
(x,\phi) \mapsto \left(\phi(0) - \alpha \phi(-\tau), \frac{\partial}{\partial \theta}\phi \right).
\end{equation}
Its domain $\mathcal{D}(A)$ consists of all $(x,\phi) \in \mathbb{H}$ such that $\phi \in W^{1,2}(-\tau,0;\mathbb{R})$ and $\phi(0)=x$. Such an operator arises in the example from Section \ref{SEC: ExampleDelay}.
\begin{lemma}
	\label{LEM: OperatorTraceBetaNumbersExample}
	For the operator $A$ given by \eqref{EQ: ExampleOperatorBetaNumbers} we have $\beta_{1}(A) = \frac{3+\alpha^{2}}{2}$ and $\beta_{k}(A) = 0$ for all $k \geq 2$.
\end{lemma}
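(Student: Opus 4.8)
The plan is to compute the trace $\operatorname{Tr}(A \circ \Pi_{\mathbb{L}})$ directly for an arbitrary $k$-dimensional subspace $\mathbb{L} \subset \mathcal{D}(A)$ and to read off the $\beta_j(A)$ from the supremum. First I would fix an orthonormal basis $\xi_1,\dots,\xi_k$ of $\mathbb{L}$, write $\xi_j = (\phi_j(0),\phi_j)$ with $\phi_j \in W^{1,2}(-\tau,0;\mathbb{R})$, and compute
\begin{equation}
\langle A\xi_j, \xi_j \rangle_{\mathbb{H}} = \big(\phi_j(0) - \alpha\phi_j(-\tau)\big)\phi_j(0) + \int_{-\tau}^{0} \phi_j'(\theta)\phi_j(\theta)\,d\theta.
\end{equation}
The integral equals $\tfrac12\big(\phi_j(0)^2 - \phi_j(-\tau)^2\big)$ by the fundamental theorem of calculus, so
\begin{equation}
\langle A\xi_j,\xi_j\rangle_{\mathbb{H}} = \tfrac32\phi_j(0)^2 - \alpha\phi_j(0)\phi_j(-\tau) - \tfrac12\phi_j(-\tau)^2.
\end{equation}
Summing over $j$, the trace on $\mathbb{L}$ is $\sum_{j=1}^{k} q\big(\phi_j(0),\phi_j(-\tau)\big)$ where $q(a,b) := \tfrac32 a^2 - \alpha a b - \tfrac12 b^2$ is the quadratic form with matrix $Q = \begin{pmatrix} 3/2 & -\alpha/2 \\ -\alpha/2 & -1/2 \end{pmatrix}$.

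Next I would bound this sum. The key point is that the pair of linear functionals $\xi \mapsto (\phi(0),\phi(-\tau))$ on $\mathcal{D}(A)$ has range all of $\mathbb{R}^2$, and — this is the crucial observation enabling the blow-up — the constraint linking $(\phi_j(0),\phi_j(-\tau))$ to the unit-norm condition $\|\xi_j\|_{\mathbb{H}}=1$ is extremely weak: one can make $\phi_j(0)$ and $\phi_j(-\tau)$ arbitrarily large while keeping $\|\xi_j\|_{\mathbb{H}}$ bounded, by concentrating $\phi_j$ near the endpoints, EXCEPT that one unit of "mass" is forced, namely $\phi_j(0)^2 \le \|\xi_j\|_{\mathbb{H}}^2$ is false in general — actually $|\phi_j(0)| = |x_j|$ contributes $x_j^2$ to $\|\xi_j\|_{\mathbb{H}}^2$. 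So I must be careful. The correct heuristic: for the supremum over ALL $k$-dimensional $\mathbb{L}$, since $q$ is indefinite with one positive and one negative eigenvalue, and since we may take $\phi_j$ supported near $-\tau$ (making $\phi_j(0)=0$, forcing $x_j = 0$, which is NOT allowed since $\phi_j(0)=x_j$) — hence $\phi_j(0)$ and $x_j$ are tied. Let me instead argue: writing $a_j = \phi_j(0) = x_j$ and $b_j = \phi_j(-\tau)$, the normalization gives $a_j^2 + \int_{-\tau}^0 \phi_j^2 = 1$, so $a_j^2 \le 1$ and $\sum_j a_j^2 \le k$; moreover by an $L^2$ estimate $b_j^2 = \phi_j(-\tau)^2 \le \big(|a_j| + \int_{-\tau}^0|\phi_j'|\big)^2$, which is unbounded. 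Since $q$ has NEGATIVE eigenvalue direction, large $b_j$ only decreases the trace; so the supremum is attained by taking $\phi_j$ as "flat" as possible, $\phi_j \equiv$ const $= a_j$, giving $b_j = a_j$ and $q(a_j,a_j) = (1 - \alpha)a_j^2$ — wait, that gives $\tfrac32 - \alpha - \tfrac12 = 1-\alpha$, not $\tfrac{3+\alpha^2}{2}$. So the optimum is NOT flat; one should instead optimize over the actual achievable set. The honest computation: for a single normalized $\xi$, maximize $q(\phi(0),\phi(-\tau))$ subject to $\phi(0)^2 + \|\phi\|_{L^2}^2 = 1$. Since $q$ decreases in $b^2$, optimal $\phi$ makes $|\phi(-\tau)|$ as small as possible for given $\phi(0)$ and given $L^2$-budget; one can drive $\phi(-\tau) \to 0$ (let $\phi$ spike near $0$, decay to $0$ before $-\tau$) while keeping $\|\phi\|_{L^2}$ as small as we like, hence $\phi(0)^2 \to 1$. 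Then $q \to \tfrac32$. Hmm, that gives $\tfrac32$, not $\tfrac{3+\alpha^2}{2}$. The resolution must be that $\phi(-\tau) \to 0$ forces nothing extra but the supremum genuinely involves $\alpha$ — so I have the form of $q$ wrong, OR the right normalization keeps the $\mathbb{R}$-component $x$ separate. Re-examining: $\langle A\xi,\xi\rangle$ includes the term from the $\mathbb{R}^n$-component of $A\xi$, which is $(\phi(0)-\alpha\phi(-\tau))\cdot x = (\phi(0)-\alpha\phi(-\tau))\phi(0)$ since $x=\phi(0)$ — that I had. So I expect, after correctly maximizing, to get $\beta_1 = \tfrac{3+\alpha^2}{2}$; the $\alpha^2$ presumably enters through a Lagrange/boundary-term optimization I'm glossing, where the optimal $\phi$ satisfies an ODE $\phi'' = \lambda \phi$ with Robin-type boundary conditions encoding the coefficients $3/2$ and $-\alpha$.

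For the higher $\beta_k$, the plan is: since $\sup_{\dim\mathbb{L}=k}\operatorname{Tr}(A\circ\Pi_{\mathbb{L}})$ can be made to approach $\beta_1(A)$ by taking $\mathbb{L}$ to be (essentially) the span of $k$ near-copies of the near-optimal one-dimensional $\xi$ — more precisely, by taking a sequence of $k$-dimensional subspaces on which all but one direction contributes trace $\to 0$ (using that $q$ vanishes on suitable sequences, e.g. $\phi_j$ with $\phi_j(0) = \phi_j(-\tau) = 0$, which is a codimension-$2$ condition inside $\mathcal{D}(A)$, still infinite-dimensional) — we get $\beta_1(A)+\dots+\beta_k(A) = \beta_1(A)$ for every $k$, hence $\beta_k(A)=0$ for $k\ge2$. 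I would make the "$q\to 0$ on many directions" step rigorous by exhibiting an explicit infinite-dimensional subspace $\mathbb{H}_0 \subset \mathcal{D}(A)$ on which $\langle A\xi,\xi\rangle = 0$ identically (e.g. functions vanishing at both $0$ and $-\tau$ give $q=0$ exactly, and their $\mathbb{R}$-component is $0$), combined with one extra near-optimal direction, and verifying the supremum cannot exceed $\beta_1(A)$ since $\operatorname{Tr}(A\circ\Pi_{\mathbb{L}}) = \sum_j \langle A e_j, e_j\rangle \le \sup_{\|\xi\|=1}\langle A\xi,\xi\rangle + (k-1)\cdot 0$ only works if the orthonormal basis can be chosen so that $k-1$ of the vectors lie in the kernel of the form — which requires knowing the symmetrized form $(A+A^*)/2$ has a huge null space.

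The main obstacle I anticipate is precisely this last point: showing $\operatorname{Tr}(A\circ\Pi_{\mathbb{L}}) \le \beta_1(A)$ for every $k$-dimensional $\mathbb{L}$, not just $\ge$. The upper bound requires understanding the symmetric form $\xi \mapsto \langle A\xi,\xi\rangle_{\mathbb{H}}$ on $\mathcal{D}(A)$ well enough to see it has at most one "positive mode" in an appropriate quadratic-form sense (so that the trace over any $k$-dimensional subspace is at most the top value). I would handle this via the explicit formula $\langle A\xi,\xi\rangle = q(\phi(0),\phi(-\tau))$: the form factors through the rank-$2$ map $\xi \mapsto (\phi(0),\phi(-\tau))$, and $q$ restricted to the image has signature $(1,1)$ (since $\det Q = -\tfrac34 - \tfrac{\alpha^2}{4} < 0$). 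Therefore for any orthonormal system, $\sum_j q(\phi_j(0),\phi_j(-\tau)) \le$ (the supremum of $q$ over the constraint set of a single vector) $+ 0 + \dots$, because the negative eigendirection of $q$ contributes nonpositively and can only help us toward the upper bound, while the constraint $\sum_j \phi_j(0)^2 \le k$ together with $\phi_j(0)^2 \le 1$ caps the positive contribution. Carefully combining the signature-$(1,1)$ structure of $q$ with the norm constraint — essentially a finite-dimensional optimization once one projects onto the two relevant functionals — should yield both $\beta_1(A) = \tfrac{3+\alpha^2}{2}$ and $\beta_k(A) = 0$ for $k \ge 2$, completing the proof.
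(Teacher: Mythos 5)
Your starting point is correct: the trace formula $\operatorname{Tr}(A\circ\Pi_{\mathbb{L}})=\sum_{i}\bigl(\tfrac32\phi_i(0)^2-\alpha\phi_i(0)\phi_i(-\tau)-\tfrac12\phi_i(-\tau)^2\bigr)$ and the quadratic form $q(a,b)$ are exactly what the paper uses. But there are two genuine gaps.

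First, your computation of $\beta_1$ goes wrong and you never recover. You argue ``since $q$ decreases in $b^2$, optimal $\phi$ makes $|\phi(-\tau)|$ as small as possible,'' obtain $q\to\tfrac32$, and then speculate that the missing $\alpha^2$ must come from a Lagrange/ODE boundary-value problem. This is false: for fixed $a=\phi(0)$, the map $b\mapsto q(a,b)=\tfrac32 a^2-\alpha a b-\tfrac12 b^2$ is a downward parabola with critical point at $b=-\alpha a$, not at $b=0$; the cross term makes nonzero $b$ advantageous. Writing $q(a,b)=a^2\,p(b/a)$ with $p(x)=-\tfrac12 x^2-\alpha x+\tfrac32$, one sees $\max_x p=p(-\alpha)=(3+\alpha^2)/2$, so under $a^2\le 1$ one gets $\beta_1(A)\le(3+\alpha^2)/2$, with a concentrating sequence $\phi_\epsilon$ (spiking to $\phi_\epsilon(0)\approx 1$ near $0$, equal to $-\alpha\phi_\epsilon(0)$ at $-\tau$, and small in $L^2$) showing the bound is attained in the limit. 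There is no Sturm--Liouville problem here; indeed the paper's remark after the lemma explicitly notes the supremum is \emph{not} attained on any nice subspace and the limiting functions are discontinuous, which is precisely why a variational ODE approach would not converge.

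Second, for $k\ge 2$ you correctly identify the upper bound $\operatorname{Tr}(A\circ\Pi_{\mathbb{L}})\le\beta_1(A)$ as the obstacle, and your ``rank-$2$ map, signature $(1,1)$'' observation points in the right direction, but you never produce a valid argument and even half-concede it only works ``if the orthonormal basis can be chosen so that $k-1$ of the vectors lie in the kernel of the form.'' That is in fact the whole trick, and it is elementary: in any $k$-dimensional $\mathbb{L}\subset\mathcal D(A)$ the linear functional $(\phi(0),\phi)\mapsto\phi(0)$ has kernel of dimension at least $k-1$, so one may choose an orthonormal basis with $\phi_i(0)=0$ for $i=1,\dots,k-1$. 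Those terms contribute $-\tfrac12\phi_i(-\tau)^2\le 0$, and the last contributes at most $(3+\alpha^2)/2$, giving $\beta_1(A)+\dots+\beta_k(A)\le(3+\alpha^2)/2$ for every $k$; combined with the (easy) lower bound obtained by adjoining directions with $\phi_i(0)=\phi_i(-\tau)=0$, this forces $\beta_k(A)=0$ for $k\ge 2$. Your proposal gestures at both halves but completes neither; the single-variable optimization over $b/a$ and the basis rotation killing $\phi_i(0)$ are the two ideas that close the argument.
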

\begin{proof}
	Let $\mathbb{L} \subset \mathcal{D}(A)$ be any $k$-dimensional subspace. Let $e_{i} = (\phi_{i}(0),\phi_{i}(\cdot)) \in \mathcal{D}(A)$, where $i=1,2,\ldots,k$, be an orthonormal basis for $\mathbb{L}$. Then we have
	\begin{equation}
	\label{EQ: TraceExampleSuarezLemma}
	\operatorname{Tr}( A \circ \Pi_{\mathbb{L}} ) = \sum_{i=1}^{k} (Ae_{i},e_{i})_{\mathbb{H}} = \sum_{i=1}^{k} \left( \frac{3}{2} \phi_{i}(0)^{2} - \alpha \phi_{i}(-\tau) \phi_{i}(0) - \frac{1}{2} \phi_{i}(-\tau)^{2} \right).
	\end{equation}
	Let us consider the case $k=1$. From \eqref{EQ: TraceExampleSuarezLemma} we have
	\begin{equation}
	\begin{split}
	\beta_{1}(A) &= \sup \left( \frac{3}{2} \phi(0)^{2} - \alpha \phi(-\tau) \phi(0) - \frac{1}{2} \phi(-\tau)^{2} \right) = \\ &= \sup \phi(0)^{2} \cdot \left( \frac{3}{2} - \alpha \frac{\phi(-\tau)}{\phi(0)} - \frac{1}{2} \cdot \left(\frac{\phi(-\tau)}{\phi(0)}\right)^{2} \right),
	\end{split}
	\end{equation}
	where the first supremum is taken over all $\phi \in W^{1,2}(-\tau,0;\mathbb{R})$ such that $|\phi(0),\phi(\cdot)|_{\mathbb{H}} = 1$ and the second has the additional constraint $\phi(0) \not= 0$. Since $\phi(0)^{2} \leq 1$ and the maximum value of the quadratic polynomial $-1/2 x^2 - \alpha x + 3/2$ is $(3+\alpha^{2})/2$, we have $\beta_{1}(A) \leq (3+\alpha^{2})/2$. But it is easy to construct a sequence of functions $\phi(\cdot)$ showing that $(3+\alpha^{2})/2$ is indeed the value of $\beta_{1}(A)$.
	
	In the case $k=2$ let us observe that the orthonormal basis $e_{1},e_{2}$ for $\mathbb{L}$ can always be chosen such that $\phi_{1}(0)=0$. Then from \eqref{EQ: TraceExampleSuarezLemma} we have
	\begin{equation}
	\beta_{1}(A)+\beta_{2}(A) = \sup \left( -\frac{1}{2}\phi_{1}(-\tau)^{2} + \frac{3}{2} \phi_{2}(0)^{2} - \alpha \phi_{2}(-\tau) \phi_{2}(0) - \frac{1}{2} \phi_{2}(-\tau)^{2} \right),
	\end{equation}
	where the supremum is taken over all $\phi_{1},\phi_{2} \in W^{1,2}(-\tau,0;\mathbb{R})$ such that $\phi_{1}(0)=0$, \\ $| \phi_{i}(0),\phi_{i}(\cdot) |_{\mathbb{H}} = 1$ and $\int_{-\tau}^{0} \phi_{1}(\theta)\phi_{2}(\theta) d\theta = 0$. It is clear that the supremum do not exceed $(3+\alpha^{2})/2$. But it is not hard to construct a sequence of functions (assuming also that $\phi_{1}(-\tau) = 0$), which shows that it is, in fact, equal to $(3+\alpha^{2})/2$. Thus, $\beta_{2}(A) = 0$.
	
	If $k \geq 2$ then we can choose the orthonormal basis $e_{1},\ldots,e_{k}$ in such a way that $\phi_{i}(0) = 0$ for $i=1,\ldots,k-1$. From similar as above arguments we have that $\beta_{k}(A) = 0$ for $k \geq 2$.
\end{proof}
Although for $A$ being self-adjoint, the supremum in \eqref{EQ: OperatorTraceBetaNumbers} is achieved on some eigenspace, it can be seen from the proof of Lemma \ref{LEM: OperatorTraceBetaNumbersExample} that for the operators corresponding to delay equations the supremum in \eqref{EQ: OperatorTraceBetaNumbers} is not achieved on any nice subspace and the limiting subspace consists of discontinuous functions.
\begin{remark}
	\label{REM: EigenvaluesBetaExampleDelay}
	Below, we prove Theorem \ref{TH: TraceNumbersTheorem}, which shows that ``the limiting subspace'' is an eigenspace for a self-adjoint extension of the symmetrized operator $S_{0}:=(A+A^{*})/2$. Let us illustrate this for $A$ from Lemma \ref{LEM: OperatorTraceBetaNumbersExample}. In this case $S_{0}$ can be extended to a bounded self-adjoint operator $S$ in $\mathbb{H}$ given by 
	\begin{equation}
		(x,\phi) \mapsto \left(\frac{3 + \alpha^{2}}{2} x, 0 \right) \text{ for all } (x,\phi) \in \mathbb{H}.
	\end{equation}
	It is clear that the trace numbers $\beta_{k}(A)$ calculated in Lemma \ref{LEM: OperatorTraceBetaNumbersExample} coincide with the eigenvalues of $S$ ordered by non-increasing.
\end{remark}
Note also that there are two leading real eigenvalues $\lambda_{1}>0$ and $\lambda_{2} < 0$ of the operator $A$ given by \eqref{EQ: ExampleOperatorBetaNumbers} (see Section \ref{SEC: ExampleDelay}). From the dichotomy of linear problems for delay equations we have that the inequality $\lambda_{1} + \lambda_{2} < 0$ determines a region in the space of parameters $(\tau,\alpha)$ where two-dimensional volumes are squeezed. But it seems impossible to reveal this region from the trace formula \eqref{EQ: TraceFormula} without directly referring to the spectral decomposition.

Thus, it seems that the trace numbers $\beta_{k}(A)$, which are useful for studying parabolic problems and ODEs, are not appropriate for delay equations.

We finish this section by proving the following theorem.
\begin{theorem}
	\label{TH: TraceNumbersTheorem}
	Let a closed operator $A$ be such that $\mathcal{D}(A) \cap \mathcal{D}(A^{*})$ is dense in $\mathbb{H}$. Suppose the operator $S_{0}=(A+A^{*})/2$, which is defined at least on $\mathcal{D}(A) \cap \mathcal{D}(A^{*})$, can be extended to a self-adjoint linear operator $S$ having a compact resolvent, $\mathcal{D}(S) \supset \mathcal{D}(A)$ and such that its eigenvalues $\alpha_{k}(S)$ can be ordered by non-increasing. Then for any $k=1,2,\ldots$ we have
	\begin{equation}
		\label{EQ: TraceNumbersTheorem}
		\beta_{1}(A) + \ldots + \beta_{k}(A) = \alpha_{1}(S) + \ldots + \alpha_{k}(S).
	\end{equation}
\end{theorem}
\begin{proof}
	For breviety, let $(\cdot,\cdot)$ denote the inner product in $\mathbb{H}$. Let us firstly prove that we have the equality
	\begin{equation}
		\label{EQ: LemmaTraceNumbersIdentity}
		(Ae,e) = (Se,e) \text{ for all } e \in \mathcal{D}(A).
	\end{equation}
    Indeed, the equality in \eqref{EQ: LemmaTraceNumbersIdentity} holds for any $e \in \mathcal{D}(A) \cap \mathcal{D}(A^{*})$. But then we have an analogous identity for the corresponding bilinear forms as 
    \begin{equation}
    	\label{EQ: LemmaTraceNimbersIdentityBilinear}
    	(Av,w) + (v,Aw) = 2(Sv,w) \text{ for all } v,w \in \mathcal{D}(A) \cap \mathcal{D}(A^{*}).
    \end{equation}
    Now take instead of $v$ a sequence $v_{m} \in \mathcal{D}(A) \cap \mathcal{D}(A^{*})$ converging to some $v \in \mathcal{D}(A)$ in $\mathbb{H}$ as $m \to +\infty$. Using the equalities $(Av_{m},w) = (v_{m},A^{*}w)$, $(Sv_{m},w) = (v_{m},Sw)$ and $\mathcal{D}(S) \supset \mathcal{D}(A)$, one can show that \eqref{EQ: LemmaTraceNimbersIdentityBilinear} holds with $v \in \mathcal{D}(A)$ and, as a consequence of the symmetry, $w \in \mathcal{D}(A)$. Now taking $v=w=e \in \mathcal{D}(A)$ in \eqref{EQ: LemmaTraceNimbersIdentityBilinear} yields \eqref{EQ: LemmaTraceNumbersIdentity}.
    
    By definition, $\beta_{1}(A) + \ldots + \beta_{k}(A)$ is given by the supremum over all linear $k$-dimensional subspaces $\mathbb{L} \subset \mathcal{D}(A)$ of the value $\operatorname{Tr}( A \circ \Pi_{\mathbb{L}})$. Let $\mathbb{L}$ be fixed and let $e_{1}, \ldots, e_{k}$ be its orthonormal basis. In virtue of \eqref{EQ: LemmaTraceNumbersIdentity} we have
    \begin{equation}
    	\label{EQ: LemmaSymmetrizedSum}
    	\operatorname{Tr}( A \circ \Pi_{\mathbb{L}} ) = \sum_{i=1}^{k} (Ae_{i},e_{i}) = \sum_{i=1}^{k} \left( S e_{i},e_{i} \right) \leq \sum_{i=1}^{k}\alpha_{i}(S),
    \end{equation}
    where the last inequality is due to the variational principle for self-adjoint operators as in Lemma 2.1, Chapter VI from \cite{Temam1997}. This inequality becomes an equality if $\mathbb{L}$ is the eigenspace corresponding to $\alpha_{1}(S),\ldots,\alpha_{k}(S)$. The proof is finished.
\end{proof}
It is however interesting, whether the sum from \eqref{EQ: TraceNumbersTheorem} can be negative for some $k$ in particular cases arising from delay equations. We do not know of any such example.
\section{Example}
\label{SEC: ExampleDelay}

We consider the Suarez-Schopf model \cite{Suarez1988} for El Ni\~{n}o--Southern Oscillation (ENSO), which is given by the following scalar delay equation:
\begin{equation}
\label{EQ: ElNinoSSmodel}
\dot{x}(t) = x(t) - \alpha x(t-\tau) - x^{3}(t),
\end{equation}
where $\alpha \in (0,1)$ and $\tau > 0$ are parameters. Let us put $\gamma:=\sqrt{1+\alpha}$ and define the sets $\mathcal{C}_{R}:=\{ \phi \in C([-\tau,0];\mathbb{R}) \ | \ \|\phi\|_{\infty} \leq \gamma + R \}$ for $R > 0$. We also use $\mathring{\mathcal{C}}_{R}$ to denote the interior of $\mathcal{C}_{R}$. If a semiflow in $C([-\tau,0];\mathbb{R})$ is given, by $\omega(\phi_{0})$ we denote the $\omega$-limit set of $\phi_{0} \in C([-\tau,0];\mathbb{R})$ w.~r.~t. this semiflow.

Let us show that \eqref{EQ: ElNinoSSmodel} is dissipative and generates a semiflow in $C([-\tau,0];\mathbb{R})$. This is contained in the following lemma.
\begin{lemma}
	\label{LEM: RegionEstimateSS}
	The set $\mathcal{C}_{R}$ is positively invariant w.~r.~t. solutions of \eqref{EQ: ElNinoSSmodel}. In particular, \eqref{EQ: ElNinoSSmodel} generates a semiflow $\varphi^{t}$, $t \geq 0$, in $C([-\tau,0];\mathbb{R})$. Moreover, $\omega(\phi_{0}) \subset \mathcal{C}_{0}$ for all $\phi_{0} \in C([-\tau,0];\mathbb{R})$.
\end{lemma}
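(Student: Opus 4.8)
The plan is to show positive invariance of $\mathcal{C}_{R}$ by a barrier/subtangentiality argument applied to the scalar nonlinearity, and then deduce the semiflow and the dissipativity statement. First I would note that the classical theory (Banach fixed point, as recalled in the Introduction) already gives local existence and uniqueness of solutions $x(\cdot,0,\phi_{0})$ for every $\phi_{0}\in C([-\tau,0];\mathbb{R})$; what must be excluded is finite-time blow-up, and this will follow once invariance of the $\mathcal{C}_{R}$ is established, since $\bigcup_{R>0}\mathcal{C}_{R}=C([-\tau,0];\mathbb{R})$. So the heart of the matter is the invariance claim.

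To prove that $\mathcal{C}_{R}$ is positively invariant, fix $\phi_{0}\in\mathcal{C}_{R}$, so $|x(s)|\le\gamma+R$ for $s\in[-\tau,0]$, and let $x(\cdot)$ be the solution on its maximal interval of existence. Suppose for contradiction that $\|x_{t}\|_{\infty}$ exceeds $\gamma+R$ for some $t>0$; let $t_{1}>0$ be the first time $|x(t_{1})|=\gamma+R$ with $|x(t)|$ about to exceed this value, so that $x(t_{1})^{2}=(\gamma+R)^{2}$ and $\tfrac{d}{dt}\big(x(t)^{2}\big)\big|_{t=t_{1}}\ge 0$. On the other hand, using the equation and $|x(t_{1}-\tau)|\le\gamma+R$,
\begin{equation}
\tfrac12\tfrac{d}{dt}x(t)^{2}\Big|_{t=t_{1}} = x(t_{1})^{2} - \alpha x(t_{1}-\tau)x(t_{1}) - x(t_{1})^{4} \le (\gamma+R)^{2} + \alpha(\gamma+R)^{2} - (\gamma+R)^{4}.
\end{equation}
Since $\gamma^{2}=1+\alpha$, the bound $(1+\alpha)(\gamma+R)^{2}-(\gamma+R)^{4}=(\gamma+R)^{2}\big(\gamma^{2}-(\gamma+R)^{2}\big)<0$ for $R>0$, contradicting $\tfrac{d}{dt}x(t_{1})^{2}\ge0$. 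Hence $\|x_{t}\|_{\infty}\le\gamma+R$ for all $t$ in the interval of existence, which shows $\mathcal{C}_{R}$ is positively invariant; combined with the blow-up alternative this gives global existence, so $\varphi^{t}$ is a well-defined semiflow on $C([-\tau,0];\mathbb{R})$ (continuity and the semigroup property are standard for delay ODEs, cf. Hale).

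For the last assertion, I would make the barrier argument quantitative: for any $R>0$, whenever $|x(t)|\ge\gamma+R$ the computation above gives $\tfrac{d}{dt}x(t)^{2}\le -c(R)<0$ for a fixed constant $c(R)$ depending only on $R,\alpha$ (using $\gamma^{2}-(\gamma+R)^{2}\le -R\gamma$). Therefore, starting from any $\phi_{0}$, the trajectory enters $\mathcal{C}_{R}$ in finite time and stays there; since this holds for every $R>0$ and $\mathcal{C}_{R}$ is closed, the forward orbit is eventually contained in arbitrarily small neighborhoods of $\mathcal{C}_{0}$, and $\omega(\phi_{0})$, being the intersection of the closures of the forward orbit tails, satisfies $\omega(\phi_{0})\subset\bigcap_{R>0}\mathcal{C}_{R}=\mathcal{C}_{0}$. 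The main technical nuisance is making the "first exit time" argument fully rigorous in the delay setting — one must be careful that the relevant derivative inequality uses the delayed value $x(t_{1}-\tau)$, which lies in the already-controlled range because $t_{1}-\tau$ is either in $[-\tau,0]$ or an earlier time at which the bound holds; this is exactly where the sign of $\gamma^{2}-(\gamma+R)^{2}$ does the work, and it is the only place where the specific structure of the cubic term is used.
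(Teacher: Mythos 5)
Your positive-invariance argument for $\mathcal{C}_{R}$ is sound and is essentially the paper's: both rest on a first-exit-time contradiction and the inequality $(1+\alpha)(\gamma+R)^{2}-(\gamma+R)^{4}<0$ for $R>0$, you just package the two boundary cases into one via $\tfrac{d}{dt}x(t)^{2}$. Global existence via boundedness is likewise the same.

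The last part — $\omega(\phi_{0})\subset\mathcal{C}_{0}$ — has a genuine gap. You assert that ``whenever $|x(t)|\ge\gamma+R$, the computation above gives $\tfrac{d}{dt}x(t)^{2}\le -c(R)<0$,'' but that computation explicitly used the bound $|x(t_{1}-\tau)|\le\gamma+R$, which you only have at a \emph{first} exit time from $\mathcal{C}_{R}$. At a generic time $t$ with $|x(t)|\ge\gamma+R$ the delayed value $x(t-\tau)$ is controlled only by the \emph{initial} radius $R_{0}$ (via invariance of $\mathcal{C}_{R_{0}}$), not by $R$. Concretely, if $|x(t)|=\gamma+R$ while $|x(t-\tau)|$ is close to $\gamma+R_{0}$ with $R_{0}\gg R$, then
\begin{equation}
\tfrac12\tfrac{d}{dt}x(t)^{2}\le(\gamma+R)^{2}+\alpha(\gamma+R_{0})(\gamma+R)-(\gamma+R)^{4},
\end{equation}
and the middle term can make the right-hand side positive, so no uniform decay constant $c(R)$ depending only on $R,\alpha$ exists, and ``the trajectory enters $\mathcal{C}_{R}$ in finite time'' does not follow from what you wrote. (Your closing remark about $t_{1}-\tau$ being in the already-controlled range addresses the invariance step, not this one.) The paper avoids this issue entirely: it takes the minimal $R$ with $\omega(\phi_{0})\subset\mathcal{C}_{R}$, shows $\omega(\phi_{0})$ cannot lie entirely on $\partial\mathcal{C}_{R}$ (by the same boundary derivative estimate, now applied to a complete trajectory inside $\omega(\phi_{0})$), then uses positive invariance of a slightly smaller $\mathcal{C}_{R-\varepsilon_{0}}$ to separate the forward orbit from the boundary point of $\omega(\phi_{0})$, a contradiction. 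Your quantitative route can likely be salvaged (e.g. via a $\limsup$ argument that shrinks the controlling radius simultaneously for $x(t)$ and $x(t-\tau)$), but as written the key decay estimate is not justified.
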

\begin{proof}
	Since the closure of a positively invariant set is positively invariant and the intersection of positively invariants sets is also positively invariant, it is sufficient to show that $\mathring{\mathcal{C}}_{R}$ is positively invariant for all $R>0$. Suppose the opposite, i.~e. there exist an initial condition $\phi_{0} \in \mathring{\mathcal{C}}_{R}$ and a time $t > 0$ such that for the classical solution $x(\cdot;0,\phi_{0})$ we have $\varphi^{s}(\phi_{0})=x_{s} \in \mathring{\mathcal{C}}_{R}$ for all $s \in [0,t)$ and $|x(t)|=\gamma + R$. From \eqref{EQ: ElNinoSSmodel} we have
	\begin{equation}
		\label{EQ: ElNinoDissLemma}
		\begin{cases} \dot{x}(t) &\leq  \gamma + R + \alpha (\gamma + R) - (\gamma + R)^{3} < 0, \text{ if } x(t)=R+\gamma,\\
			\dot{x}(t) &\geq  -(\gamma + R) - \alpha (\gamma + R) + (\gamma + R)^{3} > 0, \text{ if } x(t)=-(R+\gamma).
		\end{cases}
	\end{equation}
	that leads to a contradiction.
	
	Since any solution does not leave $\mathcal{C}_{R}$ for some $R \geq 0$, it remains bounded and therefore can be extended for times up to $+\infty$ (see Theorem 3.1, Section 2.3 in \cite{Hale1977}). Thus, \eqref{EQ: ElNinoSSmodel} generates a semiflow in $C([-\tau,0];\mathbb{R})$.
	
	Now suppose there is $\phi_{0} \in C([-\tau,0];\mathbb{R})$ such that $\omega(\phi_{0}) \not\subset \mathcal{C}_{0}$. Then there is $R>0$ such that $\omega(\phi_{0}) \subset \mathcal{C}_{R}$ and $\omega(\phi_{0})\not\subset \mathcal{C}_{R-\varepsilon}$ for all $0<\varepsilon<R$. Since $\omega(\phi_{0})$ cannot entirely lie on the boundary of $\mathcal{C}_{R}$ (due to similar as in \eqref{EQ: ElNinoDissLemma} arguments), we can find $0<\varepsilon_{1}<R$ and $\phi_{1},\phi_{2} \in \omega(\phi_{0})$ such that $\phi_{1} \in \mathcal{C}_{R-\varepsilon_{1}}$ and $\phi_{2} \in \mathcal{C}_{R}$ with $\|\phi_{2}\| = \gamma + R$. Then there is a time moment $t_{0} > 0$ for which $\varphi^{t_{0}}(\phi_{0})$ is close to $\phi_{1}$, namely, $\varphi^{t_{0}}(\phi_{0}) \in \mathcal{C}_{R-\varepsilon_{0}}$ for some $\varepsilon_{0} < \varepsilon_{1}$. But since $\mathcal{C}_{R-\varepsilon_{0}}$ is positively invariant we have that $\varphi^{t}(\phi_{0})$ is separated from $\phi_{2}$ for all $t \geq t_{0}$ that contradicts to $\phi_{2} \in \omega(\phi_{0})$. Thus, $\omega(\phi_{0}) \subset \mathcal{C}_{0}$ and the proof is finished.
\end{proof}

It should be noted that from a theorem of J.~Mallet-Paret and G.R.~Sell \cite{MalletParetSell1996} it follows that for \eqref{EQ: ElNinoSSmodel} the $\omega$-limit set of any point $\phi_{0}$ satisfies the Poincar\'{e}-Bendixson trichotomy, i.~e. it can be either a stationary point, either a periodic orbit or a union of a set of stationary points and homoclinic and heteroclinic orbits connecting them. However, numerical experiments in \cite{Suarez1988,Anikushin2021SS} show that there is a region, for which there are no periodic orbits (see below). 

An elementary analysis shows that there are three stationary states $\phi^{+} \equiv \sqrt{1-\alpha}$, $\phi^{0} \equiv 0$ and $\phi^{-} \equiv -\sqrt{1-\alpha}$. There is also a one-dimensional unstable manifold for $\phi^{0}$ for any parameters $\alpha \in (0,1)$ and $\tau > 0$. When $\alpha$ and $\tau$ are relatively small, the stationary states $\phi^{+}$ and $\phi^{-}$ are asymptotically stable. These parameters correspond to the region of linear stability in \cite{Suarez1988}, for which no periodic orbits were observed. However, in our paper \cite{Anikushin2021SS} we numerically found hidden and unstable periodic orbits in \eqref{EQ: ElNinoSSmodel}, arising after homoclinic bifurcations, for parameters from the region of linear stability which are close to its boundary (called the \textit{neutral curve} in \cite{Suarez1988}). Moreover, in \cite{Anikushin2021SS} we justified that these parameters (unlike the parameters from the region of linear instability) are related to the irregular nature of ENSO since the discovered multistability is sensible to exterior periodic forces or noise.

Since the semiflow generated by \eqref{EQ: ElNinoSSmodel} is dissipative, there is a global attractor $\mathcal{K} \subset \mathcal{C}_{0}$. To make the developed theory applicable, let $g \colon \mathbb{R} \to \mathbb{R}$ be a function which coincides with $x^{3}$ on $[-(\gamma + R), \gamma + R ]$ for some $R>0$ and it is smoothly extended outside of $[-(\gamma + R), \gamma + R ]$ in such a way that $g \in C^{1}(\mathbb{R};\mathbb{R})$ and $g'$ is globally bounded. Clearly, the set $\mathcal{K}$ will be also invariant for the semiflow $\varphi^{t}$ generated by \eqref{EQ: ElNinoSSmodel} with $x^{3}$ changed to $g(x)$. As before, we consider $\mathcal{K}$ as a subset of $\mathbb{H} = \mathbb{R} \times L_{2}(-\tau,0;\mathbb{R})$ and $\mathcal{K}$ is also invariant w.~r.~t. the corresponding semiflow in $\mathbb{H}$ given by Theorem \ref{TH: DelaySemigroupTh} which we also denote by $\varphi$.

Let $v_{0} \in \mathcal{K}$, where $v_{0}=(y_{0},\phi_{0})$. Let $y(t)=x(t;0,x_{0})$ be the classical solution of \eqref{EQ: ElNinoSSmodel} with $y(0)=y_{0}$. For $\xi \in \mathcal{D}(A)$, where $\xi = (z_{0},\phi_{0})$ such that $\phi_{0} \in W^{1,2}(-\tau,0;\mathbb{R})$ and $\phi_{0}(0)=z_{0}$, we consider the solution $V(t)=V(t,v_{0},\xi)$ of \eqref{EQ: LinearizedEquation}, which in our case can be described as a solution to
\begin{equation}
\label{EQ: ElNiNoLinearized}
\begin{split}
\dot{z}(t) &= z(t) - \alpha z(t-\tau) - 3 y^{2}(t) z(t),\\
\dot{\phi}(t) &= \frac{\partial}{\partial \theta} \phi(t).
\end{split}
\end{equation}
Here $V(t)=(z(t),\phi(t))$ and $V(0)=(z_{0},\phi_{0})$.

Let $L(t;v)$, where $t \geq 0$ and $v \in \mathcal{K}$, be the quasi-differentials given by Theorem \ref{TH: DifferentiabilityTh} for $\varphi^{t}$. It is clear that the zero stationary point $\phi^{0}$ belongs to $\mathcal{K}$. From a naive look at \eqref{EQ: ElNiNoLinearized} one may suggest that (due to the presence of the term $-3y^{2}(t)z(t)$) the squeezing of $d$-dimensional volumes at $\phi^{0}$ (where $y(t) \equiv 0$) is the worst possible comparing with other points on the attractor $\mathcal{K}$. However, the presence of delay makes it not so obvious and we can only conjecture this (see below).

The roots of the linear part of \eqref{EQ: ElNinoSSmodel} are given by
\begin{equation}
\label{EQ: RootsSuarezDelay}
1 - \alpha e^{-\tau p} - p = 0.
\end{equation}
It can be verified that if $\alpha \in (0,1)$ and $\tau>0$ then there are always one positive real root $\lambda_{1}$, one negative real root $\lambda_{2}$ and the others roots are located to the left from $\lambda_{2}$. From the dichotomy of autonomous linear systems (see Theorem 4.1, Chapter 7 in \cite{Hale1977}) it follows that $\lambda_{1}+\lambda_{2}<0$ indicates the squeezing of $2$-dimensional volumes at the zero stationary state $\phi^{0}$. From the above observations we pose the following problem.
\begin{problem}
	\label{PROB: SSmodel2DSqueeze}
	Let $\lambda_{1}=\lambda_{1}(\alpha,\tau)$ and $\lambda_{2}=\lambda_{2}(\alpha,\tau)$, where $\alpha \in (0,1)$ and $\tau>0$, be the positive and the negative real roots of \eqref{EQ: RootsSuarezDelay}. Is it true that the condition $\lambda_{1} + \lambda_{2} < 0$ excludes the presence of periodic orbits and homoclinics in \eqref{EQ: ElNinoSSmodel}?
\end{problem}
\begin{remark}
	It can be shown that $\lambda_{1}+\lambda_{2}<0$ is equivalent to
	\begin{equation}
		\label{EQ: SSLambda1PlusLambda2Ineq}
		\tau < \frac{\log(\frac{1+\sqrt{1-\alpha^{2}}}{\alpha})}{\sqrt{1-\alpha^{2}}}.
	\end{equation}
\end{remark}
The inequality $\lambda_{1} + \lambda_{2} < 0$ indicates a large region in the space of parameters $(\alpha,\tau)$, which is included (but strictly smaller) in the region of linear stability from \cite{Suarez1988} (see Fig. \ref{SSmodelConvAndProblemEst}). When $\lambda_{1} + \lambda_{2} > 0$, there may exist periodic orbits and homoclinics as it is indicated in \cite{Anikushin2021SS}. However, the region $\lambda_{1} + \lambda_{2} < 0$ is contained (but strictly smaller) in the region of true stability (with a gradient-like behavior) numerically obtained in \cite{Anikushin2021SS}. Thus, \cite{Anikushin2021SS} provides numerical evidence that the answer to Problem \ref{PROB: SSmodel2DSqueeze} should be positive.

At the end of Section \ref{SEC: DelayDifferentiability} we justified that straightforward applications of the Liouville trace formula \eqref{EQ: TraceFormula} are not appropriate to study delay equations. 

Let us present another viewpoint onto the above problem. For this we consider the 2nd compound cocycle, say $\Xi_{(2)} = \{ \Xi^{t}_{(2)}(q,\cdot) \}$, where $t \geq 0$ and $q \in \mathcal{K}$, associated with \eqref{EQ: ElNiNoLinearized}. This cocycle $\Xi_{(2)}$ acts in the 2-nd exterior power $\bigwedge^{2}\mathbb{E}$ of $\mathbb{E}=C([-\tau,0];\mathbb{R})$ and it is determined by the formula $\bigwedge^{2}\mathbb{E} \ni \xi_{1} \wedge \xi_{2} \mapsto \Xi_{(2)}^{t}(q,\xi_{1} \wedge \xi_{2}) := L(t;q)\xi_{1} \wedge L(t;q)\xi_{2} \in \bigwedge^{2}\mathbb{E}$ for $\xi_{1},\xi_{2} \in \mathbb{E}$. Note that $\Lambda^{2}\mathbb{E}$ is endowed with the so-called injective cross norm $\|\cdot\|$ (see \cite{MalletParretNussbaum2013} for details) for which we have $\| \xi_{1} \wedge \xi_{2} \| = \|\xi_{1}\|_{\mathbb{E}} \cdot \|\xi_{2}\|_{\mathbb{E}}$. There exists a leading spectral interval of the form $[a_{1},b_{1}]$ or $(a_{1},b_{1}]$ in the Sacker-Sell spectrum of $\Xi_{(2)}$ over $\mathcal{K}$. This means that for every $\varepsilon>0$ there exists a constant $M>0$ such that
\begin{equation}
	\label{EQ: DecayAreaEstimateSSmodelE}
	\sup_{q \in \mathcal{K}}\| \Xi^{t}_{(2)}(q,\cdot) \| \leq M e^{(b_{1}+\varepsilon) t} \text{ for } t \geq 0.
\end{equation}
Clearly, $b_{1} < 0$ indicates the uniform squeezing of two-dimensional volumes over $\mathcal{K}$. Thus, one may ask how to estimate $b_{1}$.

Using the result of D.~Dragi\v{c}evi\'{c} \cite{Dragicevic2018} we can prove the following theorem which describes $b_{1}$ as a Lyapunov exponent over a certain ergodic measure. In our case, all ergodic measures are concentrated on equilibria and periodic orbits and, consequently, $b_{1}$ can be described in terms of the Floquet exponents or eigenvalues. 
\begin{remark}
	\label{REM: DelayExampleQuasiCompactCocycle}
	Before giving the proof, we note that in \cite{Dragicevic2018} it is important that the cocycle is quasi-compact and compact cocycles not necessarily satisfy this property (see below). Although, it seems that the main result of \cite{Dragicevic2018} holds for compact cocycles also since the quasi-compactness is used only to apply the Multiplicative Ergodic Theorem, which is also satisfied for compact cocycles. Nevertheless, it is worth discussing this property since the general approach is potentially applicable for neutral delay equations. In terms of \cite{Dragicevic2018}, quasi-compactness requires two quantities $\kappa(q)$ and $\lambda(q)$, describing certain exponents of the growth rates of the cocycle over ergodic components, to be strictly separated as $\kappa(q) < \lambda(q)$. In our case $\kappa(q) \equiv -\infty$ due to compactness and the question is whether $\lambda(q) > -\infty$. In our concrete situation it can be stated as ``Can two-dimensional volumes decay faster than any exponent?''. Unfortunately, this (i.~e. $\lambda(q) = -\infty$) may happen for general infinite-dimensional systems (including delay equations) and their compound cocycles since, for example, there are counterexamples to the Floquet theory in infinite dimensions (see, for example, A.~Eden, S.~Zelik, V.K.~Kalantarov \cite{EdenZelikKalantarov2013}). Thus, results of \cite{MalletParretNussbaum2013} on the Floquet theory, showing that there is a countable number of non-zero Floquet multipliers, along with the Poincar\'{e}-Bendixson trichotomy from \cite{MalletParetSell1996} are crucial here. Moreover, in the general case, the existence of a gap of rank $j$ in the Sacker-Sell spectrum for the 1st compound (linearization) cocycle) guarantees that the compound cocycles upto $j$th are quasi-compact. Thus, for $\Xi_{(2)}$ the problem of quasi-compactness can also be resolved via the existence of two-dimensional inertial manifolds discussed in Section \ref{SEC: ExampleDelayIM}.
\end{remark}
\begin{theorem}
	\label{TH: FloquetMultipliersAsExtremalPointSSmodel}
	In the above introduced notation, the right endpoint $b_{1}$ of the leading spectral interval for the cocycle $\Xi_{(2)}$ in $\bigwedge^{2}C([-\tau,0];\mathbb{R})$ over the global attractor $\mathcal{K}$ is realized as
	\begin{equation}
		b_{1} = \frac{\operatorname{ln}|\mu_{1}\mu_{2}|}{\sigma},
	\end{equation}
     where $\mu_{1}$ and $\mu_{2}$ are the first two Floquet multipliers over a certain $\sigma$-periodic orbit or a stationary point (treated as a period $1$ orbit).
\end{theorem}
\begin{proof}
	Below, when referring to certain results for discrete-time systems, we consider the discrete cocycle $A(q,n) := \Xi^{\tau n}_{(2)}(q,\cdot)$ in $\bigwedge^{2}C([-\tau,0];\mathbb{R})$ over the homeomorphism\footnote{For the uniqueness of backward extensions of solutions (which gives the homeomorphism property of $\varphi^{t}$ restricted to the global attractor) see Theorem 4.1, Section 3.4 in \cite{Hale1977}.} $f(\cdot):=\varphi^{\tau}(\cdot)$ on $\mathcal{K}$. Note that the operators $A(q,n)$ are compact.
	
	Let $\mu$ be an ergodic Borel measure for $f$ on $\mathcal{K}$. Since the support of any Borel invariant measure must lie in the set of recurrent points (see item 1, Proposition 4.1.18 in \cite{KatokHasselblatt1996}), the support of $\mu$ contain only stationary or periodic points of the semiflow $\varphi$ due to the Poincar\'{e}-Bendixson trichotomy (Theorem 2.1 in \cite{MalletParetSell1996}). From the Floquet theory developed in \cite{MalletParretNussbaum2013} (see Theorem 5.1 therein), it follows that for any periodic point $q \in \mathcal{Q}$ we have
	\begin{equation}
		\lambda(q):=\lim\limits_{n \to +\infty}\frac{\| A(q,n) \|}{n} > -\infty
	\end{equation}
	and, in particular, the cocycle $A(q,n)$ is quasi-compact (see Remark \ref{REM: DelayExampleQuasiCompactCocycle} and Section 2.2 in \cite{Dragicevic2018}).
	
	By Theorem 7 in \cite{Dragicevic2018}, one can take $\mu$ such that $b_{1}$ is realized as a Lyapunov exponent for $\mu$. Let $q_{0} = \phi_{0}$ be any point in the support, at which the Lyapunov exponent exists. Without loss of generality, we may assume that $q_{0}$ is $\sigma$-periodic. If $\mu_{1}$ and $\mu_{2}$ are the first two Floquet multipliers (their existence is guaranteed by Theorem 5.1 in \cite{MalletParretNussbaum2013}), then $\sigma b_{1} \leq \operatorname{ln}|\mu_{1} \mu_{2}|$. But it is clear that $\sigma \operatorname{ln}|\mu_{1} \mu_{2}| \leq b_{1}$. This finishes the proof.
\end{proof}
Thus, Theorem \ref{TH: FloquetMultipliersAsExtremalPointSSmodel} reduces Problem \ref{PROB: SSmodel2DSqueeze} to that any periodic orbit must be asymptotically stable (in the sense $|\mu_{1}\mu_{2}| < 1$) in the considered region.

For a real number $r$ we consider the roots $\lambda^{(r)}_{1}=\lambda^{(r)}_{1}(\alpha,\tau)$, $\lambda^{(r)}_{2}=\lambda^{(r)}_{2}(\alpha,\tau)$, \ldots enumerated in decreasing (by the real part) order according to their multiplicity of the equation
\begin{equation}
	\label{EQ: RadiiEq}
	1-3r^{2}_{0} - \alpha e^{-\tau p } - p = 0.
\end{equation}

The following theorem provides an estimate for $|\mu_{1}\mu_{2}|$. It highly relates on the monotonicity results for $\Xi_{(2)}$ from \cite{MalletParretNussbaum2013}.
\begin{theorem}
	Let $y_{0}(\cdot)$ be a $\sigma$-periodic solution of \eqref{EQ: ElNinoSSmodel} and let $\mu_{1}$ and $\mu_{2}$ be the first two Floquet multipliers. Let $r_{0}:=\operatorname{max}_{t \in [0,\sigma]}|y_{0}(t)|$. Then for any $r \geq r_{0}$ we have
	\begin{equation}
		\label{EQ: SSMultiplicators2Estimate}
		\operatorname{ln}|\mu_{1} \mu_{2}| \leq \sigma \left[ 6 \cdot \frac{1}{\sigma}\int_{0}^{\sigma} \left( r^{2}-y^{2}_{0}(s) \right) ds + \lambda_{1}^{(r)}+\lambda^{(r)}_{2} \right],
	\end{equation}
    where $\lambda_{1}^{(r)}$ and $\lambda^{(r)}_{2}$ are the first two of roots of \eqref{EQ: RadiiEq}.
\end{theorem}
\begin{proof}
		Let $q_{0} \in \mathcal{K}$ be the $\sigma$-periodic point corresponding to $y_{0}(\cdot)$. Let $M_{q_{0}} := \Xi^{\sigma}_{(2)}(q_{0},\cdot)$ be the monodromy operator over $q_{0}$ for the 2nd compound cocycle $\Xi_{(2)}$. Note that the linearization along the orbit is given by
		\begin{equation}
			\label{EQ: SSmodelMultipEstimate1}
			\dot{z}(t) = (1-3y^{2}_{0}(t))z(t) - \alpha z(t-\tau).
		\end{equation}
		 We will compare $M_{q_{0}}$ with the time-$\sigma$ map for the 2nd compound process, say $M_{r}$, generated by
		 \begin{equation}
		 	\label{EQ: SSmodelMultipEstimate2}
		 	\dot{z}(t) = (1-3r^{2})z(t) - \alpha z(t-\tau).
		 \end{equation}
	     After the changes of variables given by $w(t)=\operatorname{exp}\left(\int_{0}^{t}(3y^{2}_{0}(s)-1)ds\right) z(t)$ and $w(t)=\operatorname{exp}\left(\int_{0}^{t}(3r-1)ds\right) z(t)$ equations \eqref{EQ: SSmodelMultipEstimate1} and \eqref{EQ: SSmodelMultipEstimate2} transfer respectively into
	     \begin{equation}
	     	\begin{split}
	     		\dot{w}(t) = -b_{1}(t) w(t-\tau),\\
	     		\dot{w}(t) = -b_{2}(t) w(t-\tau).
	     	\end{split}
	     \end{equation}
         Note that $b_{1}(t) = \alpha_{0} \int_{t-\tau}^{t}(3y^{2}_{0}(s)-1)ds$ and $b_{2}(t) = \alpha_{0} \int_{t-\tau}^{t}(3r^{2}-1)ds$. Let $\widetilde{M}_{q_{0}}$ and $\widetilde{M}_{r}$ denote the time-$\sigma$ maps of the corresponding 2nd compound processes in $\bigwedge^{2}C([-\tau,0];\mathbb{R})$. Since $b_{1} \leq b_{2}$, by repeating arguments from the proof of Proposition 5.3 in \cite{MalletParretNussbaum2013}, we have that $0 \leq \widetilde{M}_{q_{0}} \leq \widetilde{M}_{r}$ in the sense of the partial ordering given by a closed convex reproducing normal cone (the cone $K_{2}$ from formula (4.7) in \cite{MalletParretNussbaum2013}). By Proposition 5.7 in \cite{MalletParretNussbaum2013} we have that $\rho(\widetilde{M}_{q_{0}}) \leq \rho(\widetilde{M}_{r})$, where $\rho(\cdot)$ denotes the spectral radius.
         
         Now note that $\rho(\widetilde{M}_{q_{0}})= \operatorname{exp}\left(2\int_{0}^{\sigma} (3y^{2}(s)-1)ds \right) \rho(M_{q_{0}})$ and $\rho(\widetilde{M}_{r}) = \operatorname{exp}\left( 2\int_{0}^{\sigma} (3r^{2}-1)ds \right) \rho(M_{r})$. Since $\rho(M_{q_{0}})=|\mu_{1}\mu_{2}|$ and $\rho(M_{r}) = e^{\sigma( \lambda^{(r)}_{1} + \lambda^{(r)}_{2}) }$, we obtain the desired conclusion.
\end{proof}

\begin{remark}
	One can also obtain an estimate similar to \eqref{EQ: SSMultiplicators2Estimate} for any even number $m$ of the Floquet multipliers $\mu_{1},\mu_{2},\ldots,\mu_{m}$ as
	\begin{equation}
		\operatorname{ln}| \mu_{1}\mu_{2}\cdot\ldots\cdot\mu_{m} | \leq \sigma\left[ 3m \frac{1}{\sigma}\int_{0}^{\sigma} (r^{2}-y^{2}_{0}(s)) ds + \sum_{k=1}^{m} \lambda^{(r)}_{k} \right].
	\end{equation}
\end{remark}

From the point of view given by Lemma \ref{EQ: SSMultiplicators2Estimate}, it is required to provide a bound for periodic orbits. It turns out that the bound $\sqrt{1+\alpha}$ given by Lemma \ref{LEM: RegionEstimateSS} is not appropriate due to its roughness. The following lemma provides sharper estimates for the region containing the global attractor $\mathcal{K}$ for certain parameters.
\begin{lemma}
	\label{LEM: SharperRegionSS}
	Suppose that $\alpha \tau < 1/2$ and put 
	\begin{equation}
		C(\alpha,\tau):= \frac{\alpha \tau}{1-\alpha \tau} \cdot \frac{4}{3} \cdot (1-\alpha) \cdot \sqrt{ \frac{1-\alpha}{3}}.
	\end{equation}
    Let $R_{1}$ be the unique positive root of $-p^{3} + (1-\alpha )p +C(\alpha;\tau) = 0$ and put
    \begin{equation}
    	R_{2} := C(\alpha,\tau) \cdot \operatorname{max}\{ (1 - \alpha \tau)^{-1}, (\alpha\tau)^{-1}  \}.
    \end{equation}
    Then the global attractor $\mathcal{K}$ lies in the set $\mathcal{C}(R_{1};R_{2})$ defined by
    \begin{equation}
    	\label{EQ: SSmodelSharperRegionDef}
    	\mathcal{C}(R_{1};R_{2}) := \{ \phi \in C^{1}([-\tau,0];\mathbb{R}) \ | \ \|\phi\|_{\infty} \leq R_{1}, \| \phi' \|_{\infty} \leq R_{2}  \}.
    \end{equation}
\end{lemma}
\begin{remark}
	Note that for the radius $R_{1}$ from Lemma \ref{LEM: SharperRegionSS} we always have $R_{1} > \sqrt{1-\alpha}$. Moreover, in the region $\alpha \tau < 1/2$ the estimate for the radius of dissipativity given by the lemma significantly improves the estimate $\sqrt{1+\alpha}$ from Lemma \ref{LEM: RegionEstimateSS}, especially for larger $\alpha$ since $R_{1} \to \sqrt{1-\alpha}$ as $\alpha \to 1-$.
\end{remark}
\begin{proof}
	The proof is similar to Lemma \ref{LEM: RegionEstimateSS}, although contains more calculations. Let us consider the set $\mathcal{C}(R_{1};R_{2})$ as in \eqref{EQ: SSmodelSharperRegionDef} for certain $R_{1}$ and $R_{2}$ (not necessarily the ones specified in the statement of the lemma). Let $\mathring{\mathcal{C}}(R_{1};R_{2})$ be the set determined by strict inequalities and let us check its ``positive invariance'' for certain initial conditions and values $R_{1}$ and $R_{2}$. Suppose that $\phi_{0} \in \mathring{\mathcal{C}}(R_{1};R_{2})$. We suppose that the solution $x(t)=x(t;\phi_{0})$ satisfies $\dot{x}(0)=\phi'_{0}(0)$ and there exists $t_{0}>0$ such that $x_{t} \in \mathring{\mathcal{C}}(R_{1};R_{2})$ for all $t \in [0,t_{0})$ and $x_{t_{0}} \in \mathcal{C}(R_{1};R_{2})$. There are only two possible cases (when one of the strict inequalities is violated).

    Case 1: $|x(t_{0})| = R_{1}$. It is sufficient to assume that $x(t_{0})=R_{1}$ and $|x'(t)| \leq R_{2}$ for $t \in [0,t_{0}]$. From \eqref{EQ: ElNinoSSmodel} we have
    \begin{equation}
    	\dot{x}(t_{0}) = R_{1} - R^{3}_{1} - \alpha x(t_{0}-\tau).
    \end{equation}
    Note that $x(t_{0}-\tau)  \geq R_{1} - \tau R_{2}$. Thus, to get the impossibility of our situation it is sufficient to require that
    \begin{equation}
    	\label{EQ: SharperRegionSSmodelCase1}
    	(1-\alpha)R_{1}-R^{3}_{1} + \alpha \tau R_{2} < 0.
    \end{equation}
    
    Case 2: $|x'(t_{0})| = R_{2}$. It is sufficient to assume that $x'(t_{0}) = R_{2}$ and $|x(t)| \leq R_{1}$ for $t \in [0,t_{0}]$. From \eqref{EQ: ElNinoSSmodel} we have
    \begin{equation}
    	R_{2} = x(t_{0}) - \alpha x(t_{0}-\tau) - x^{3}(t_{0}) = (1-\alpha) x(t_{0}) + \alpha \int_{t_{0}-\tau}^{t_{0}}\dot{x}(s)ds - x^{3}(t_{0}).
    \end{equation}
    In particular, we have $R_{2} \leq (1-\alpha)x(t_{0})-x^{3}(t_{0}) + \alpha \tau R_{2}$. For $x(t_{0}) \geq 0$ we have that 
    \begin{equation}
    	(1-\alpha)x(t_{0}) - x^{3}(t_{0}) \leq (1-\alpha) \cdot \sqrt{\frac{1-\alpha}{3}} \cdot \frac{4}{3}
    \end{equation}
    and, consequently, it is sufficient to require
    \begin{equation}
    	R_{2} > (\alpha \tau)^{-1} \cdot C(\alpha;\tau)
    \end{equation}
    to get the desired impossibility. On the other hand, if $x(t_{0}) < 0$, it is sufficient to require that
    \begin{equation}
    	R_{2} > (1-\alpha \tau)^{-1} \cdot ( -(1-\alpha)R_{1} + R^{3}_{1}  ).
    \end{equation}
    Let us take $R_{0}>0$ and $R_{2} := (\alpha \tau)^{-1} \cdot C(\alpha,\tau) + R_{0}$. Then \eqref{EQ: SharperRegionSSmodelCase1} takes the form
    \begin{equation}
    	\label{EQ: SharperEstimateSSI}
    	(1-\alpha) R_{1} - R^{3}_{1} +C(\alpha,\tau) + \alpha \tau R_{0} < 0.
    \end{equation}
    Moreover, for $R_{2}:=(1-\alpha \tau)^{-1} (-(1-\alpha)R_{1}+R^{3}_{1}) + R_{0}$ \eqref{EQ: SharperRegionSSmodelCase1} takes the form
    \begin{equation}
    	\label{EQ: SharperEstimateSSII}
    	(1-\alpha) R_{1} - R^{3}_{1} + \frac{\alpha \tau}{1-\alpha \tau} \cdot \left( -(1-\alpha)R_{1} + R^{3}_{1} \right) + \alpha \tau R_{0} < 0
    \end{equation}
    Note that $\alpha \tau < (1-\alpha \tau)$ is equivalent to $\alpha \tau < 1/2$. Thus, for any $R_{0} > 0$ there exists $R_{1}=R_{1}(R_{0})$ such that both \eqref{EQ: SharperEstimateSSI} and \eqref{EQ: SharperEstimateSSII} are satisfied and the corresponding region $\mathring{\mathcal{C}}(R_{1},R_{2})$ will be positively invariant in the above given sense if we take $R_{2} = \operatorname{max}\{ (\alpha \tau)^{-1} \cdot C(\alpha,\tau), (1-\alpha\tau)^{-1}(-(1-\alpha)R_{1} + R^{3}_{1}) \} + R_{0}$. Note that as $R_{0} \to 0+$ the value $R_{1}(R_{0})$ can be taken such that it tends to the value defined in the statement of the lemma.
    
    Now the final statement about the global attractor can be shown analogously to the corresponding part of Lemma \ref{LEM: RegionEstimateSS}.
\end{proof}

Now we use the obtained in Theorem \ref{LEM: SharperRegionSS} sharper bounds for the global attractor $\mathcal{K}$ to get the nonexistence of periodic orbits and homoclincs for certain parameters.
\begin{theorem}
	Suppose that $\alpha \tau < 1/2$ and for the radius $R_{1}$ from Lemma \ref{LEM: SharperRegionSS} we have
	\begin{equation}
		\label{EQ: SSmodelRoughMuEstimate}
		6 \cdot R^{2}_{1} + \lambda^{(R_{1})}_{1} + \lambda^{(R_{1})}_{2} < 0.
	\end{equation}
    Then $b_{1} < 0$ and there are no periodic orbits and homoclinics in \eqref{EQ: ElNinoSSmodel}. Moreover, any point tends to one of the stationary states $\phi^{0}$,$\phi^{+}$,$\phi^{-}$.
\end{theorem}
\begin{proof}
	Indeed, \eqref{EQ: SSmodelRoughMuEstimate} and \eqref{EQ: SSMultiplicators2Estimate} imply that for any periodic orbit the first two Floquet multipliers satisfy $|\mu_{1}\mu_{2}| < 1$. Note that $\alpha \tau < 1/2$ implies that $\lambda_{1} + \lambda_{2} < 0$ for the roots of \eqref{EQ: RootsSuarezDelay} and that $\phi^{+}$ and $\phi^{-}$ are asymptotically stable. Thus, Theorem \ref{TH: FloquetMultipliersAsExtremalPointSSmodel} implies that $b_{1} < 0$ and we have the uniform decay for two-dimensional volumes over the global attractor $\mathcal{K}$ in the sense of \eqref{EQ: DecayAreaEstimateSSmodelE}.
	
    Due to the smoothing estimate in \textbf{(ULIP)} from Theorem \ref{TH: DelaySemigroupTh}, one can show that an analog of \eqref{EQ: DecayAreaEstimateSSmodelE} holds also in the norm of $\mathbb{H}$ and from \eqref{EQ: SingularValuesFunctionAsNorm} we get that \eqref{EQ: SingularValuesSqueezing} is satisfied for $d=2$ and sufficiently large $t>0$. From Theorem \ref{TH: ChepyzhovIlyinCor} it follows that $\operatorname{dim}_{F}\mathcal{K} < 2$. Then Corollary 2 from \cite{LiMuldowney1995} implies that there are no periodic orbits or homoclinics in $\mathcal{K}$. Now the Poincar\'{e}-Bendixson trichotomy (Theorem 2.1 in \cite{MalletParetSell1996}) guarantees that the $\omega$-limit set of any point must be a single stationary point. The proof is finished.
\end{proof}
Fig. \ref{SSmodelConvAndProblemEst} shows the region determined by \eqref{EQ: SSmodelRoughMuEstimate}, however it is much smaller than the expected region considered in Problem \ref{PROB: SSmodel2DSqueeze}.

\begin{figure}
	\centering
	\includegraphics[width=1.\linewidth]{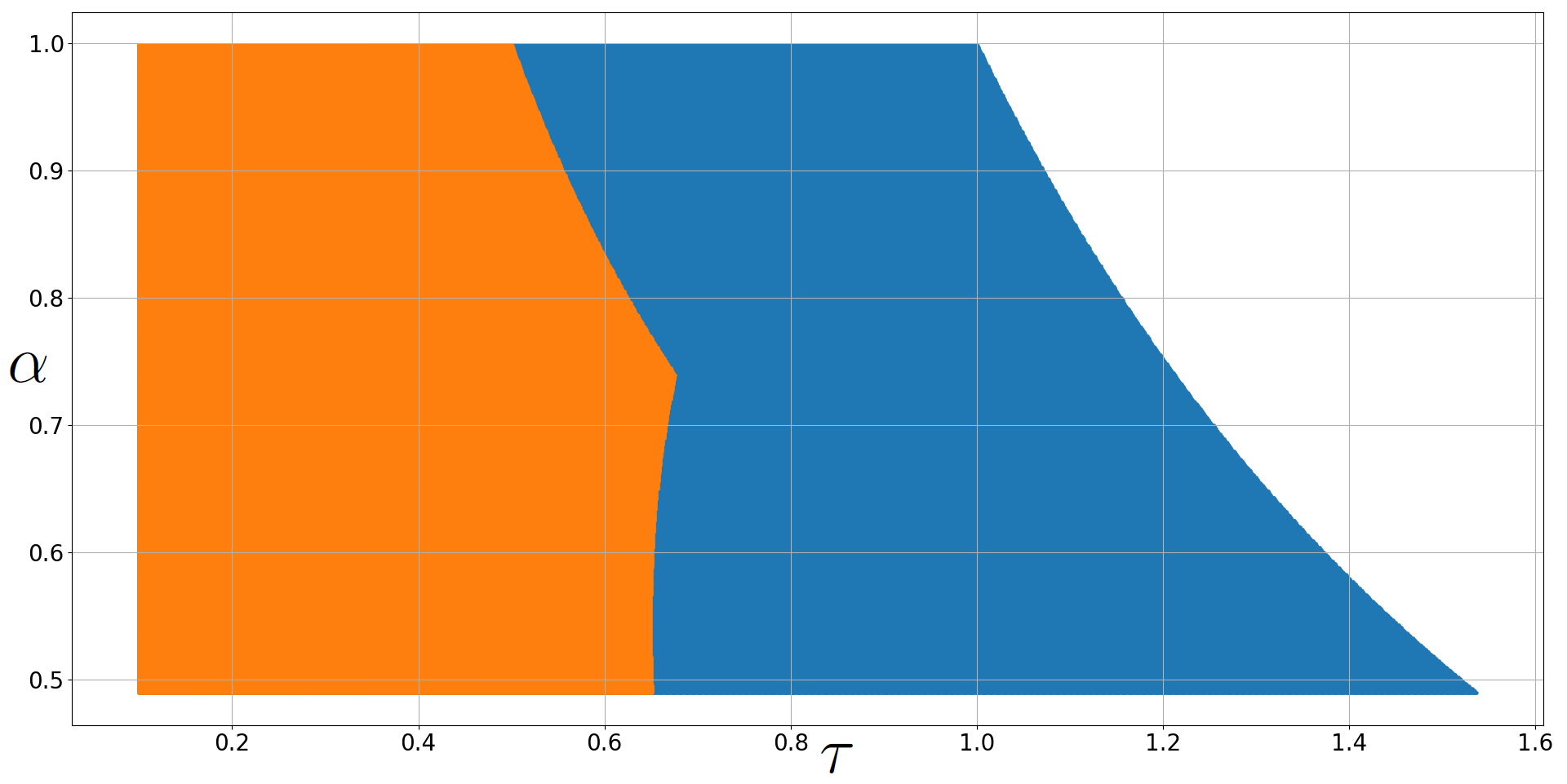}	
	\caption{A numerically obtained region in the space of parameters $(\tau,\alpha)$ of system \eqref{EQ: ElNinoSSmodel}, where \eqref{EQ: SSmodelRoughMuEstimate} is satisfied (orange) and \eqref{EQ: SSLambda1PlusLambda2Ineq} is satisfied (orange and blue).}
	\label{SSmodelConvAndProblemEst}
\end{figure}
\section{Inertial manifolds}
\label{SEC: InvariantManifoldsDelay}
For $v \in \mathbb{E}$ and $t \geq 0$ let us define $\psi^{t}(v,\xi):=L(t,v)\xi$, where $L(t,v)\xi$, as in Section \ref{SEC: DelayDifferentiability}, is a solution to the linearized equation \eqref{EQ: LinearizedEquation}. From Theorem 2.2, Chapter 2 in \cite{Hale1977}, we immediately have the following lemma.
\begin{lemma}
	\label{LEM: DelayLinearCocycle}
	Let $F \in C^{1}(\mathbb{R}^{r};\mathbb{R}^{m})$ and $F'$ be globally bounded. Then the map $(t,v,\xi) \mapsto \psi^{t}(v,\xi)$ is continuous as a map from $\mathbb{R}_{+} \times \mathbb{E} \times \mathbb{E}$ to $\mathbb{E}$.
\end{lemma}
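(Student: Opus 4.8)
The plan is to reduce the assertion to a local statement and then to re-run, while now tracking the $\mathbb{E}$-norm and all three arguments simultaneously, the Gronwall estimates already used in Sections~\ref{SEC: ProofSemigroupTh}--\ref{SEC: DelayDifferentiability}. Fix $(t_{0},v_{0},\xi_{0}) \in \mathbb{R}_{+} \times \mathbb{E} \times \mathbb{E}$, a horizon $T > t_{0}$ and a set $\mathcal{B} \subset \mathbb{E}$ bounded in $\mathbb{E}$ with $v_{0},\xi_{0}$ in its interior; it suffices to prove $\psi^{t}(v,\xi) \to \psi^{t_{0}}(v_{0},\xi_{0})$ in $\mathbb{E}$ as $(t,v,\xi) \to (t_{0},v_{0},\xi_{0})$ with $t \in [0,T]$, $v,\xi \in \mathcal{B}$. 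Using the splitting
\begin{equation}
\begin{split}
\psi^{t}(v,\xi) - \psi^{t_{0}}(v_{0},\xi_{0}) = {} & \big[\psi^{t}(v,\xi) - \psi^{t}(v,\xi_{0})\big] + \big[\psi^{t}(v,\xi_{0}) - \psi^{t}(v_{0},\xi_{0})\big] \\
& {} + \big[\psi^{t}(v_{0},\xi_{0}) - \psi^{t_{0}}(v_{0},\xi_{0})\big],
\end{split}
\end{equation}
I would handle the three brackets in turn: continuity in $\xi$, continuity in $v$, continuity in $t$.

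For the first bracket I note that $\psi^{t}(v,\cdot)$ is linear and that \eqref{EQ: LinearizedEquation} is of the form \eqref{EQ: AbstractDelayHilberSpace} with nonlinearity $\widetilde{F}(s,y)=F'(C\varphi^{s}(v))y$ whose Lipschitz constant is at most $\|F'\|_{\infty}$, uniformly in $s$ and $v$; hence, by Theorem~\ref{TH: DelaySemigroupTh} and Remark~\ref{REM: UniformConstant}, the $\mathbb{E}$-version of \textbf{(ULIP)} provides constants $M_{1},\varkappa$ depending only on $T$ with $\|\psi^{t}(v,\xi)-\psi^{t}(v,\xi_{0})\|_{\mathbb{E}} \leq M_{1}e^{\varkappa T}\|\xi-\xi_{0}\|_{\mathbb{E}}$ for all $t\in[0,T]$, $v\in\mathcal{B}$, which is small uniformly in $(t,v)$. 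For the third bracket I would use that $t\mapsto\psi^{t}(v_{0},\xi_{0})=V(t;v_{0};\xi_{0})$ is continuous from $[0,\infty)$ into $\mathbb{E}$: approximating $\xi_{0}$ in $\mathbb{E}$ by data in $\mathcal{D}(A)$, property \textbf{(REG)} makes each approximating solution lie in $C([0,\infty);\mathbb{E})$, and the $\mathbb{E}$-version of \textbf{(ULIP)} exhibits $V(\cdot;v_{0};\xi_{0})$ as their uniform-on-compacts limit in $\mathbb{E}$, hence itself in $C([0,\infty);\mathbb{E})$.

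The second bracket is where the work is, and it is essentially a repetition of the Gronwall argument from the proofs of Lemma~\ref{LEM: DelayDifferentiabilityLemma} and Theorem~\ref{TH: DifferentiabilityTh}. Setting $\delta(t):=V(t;v;\xi_{0})-V(t;v_{0};\xi_{0})$, so that $\delta(0)=0$, the variation of constants formula \eqref{EQ: VariationOfConstants} for \eqref{EQ: LinearizedEquation} gives
\begin{equation}
\begin{split}
\delta(t) = {} & \int_{0}^{t} G(t-s)B\big[F'(C\varphi^{s}(v)) - F'(C\varphi^{s}(v_{0}))\big]CV(s;v_{0};\xi_{0})\,ds \\
& {} + \int_{0}^{t} G(t-s)BF'(C\varphi^{s}(v))C\delta(s)\,ds.
\end{split}
\end{equation}
Since $C$ has finite-dimensional range and $v\mapsto\varphi^{s}(v)$ is Lipschitz in $\mathbb{E}$ uniformly for $s\in[0,T]$ (by the $\mathbb{E}$-version of \textbf{(ULIP)}), the set $\{C\varphi^{s}(w):s\in[0,T],\ w\in\mathcal{B}\}$ is precompact in $\mathbb{R}^{r}$, and uniform continuity of $F'$ on its closure yields $\rho(v):=\sup_{s\in[0,T]}|F'(C\varphi^{s}(v))-F'(C\varphi^{s}(v_{0}))|\to0$ as $v\to v_{0}$ in $\mathbb{E}$. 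Using \eqref{EQ: ExponentialEstimate}, boundedness of $F'$, the bound $\|V(\cdot;v_{0};\xi_{0})\|_{C([0,T];\mathbb{E})}\leq M_{1}e^{\varkappa T}\|\xi_{0}\|_{\mathbb{E}}$, property \textbf{(MES)}, and $\delta(0)=0$ (which removes the $|v(0)|_{\mathbb{H}}$ term and gives $\int_{0}^{t}|C\delta(s)|\,ds\leq M_{C}\int_{0}^{t}|\delta(s)|_{\mathbb{H}}\,ds$), one obtains $|\delta(t)|_{\mathbb{H}}\leq c_{1}\rho(v)\|\xi_{0}\|_{\mathbb{E}}+c_{2}\int_{0}^{t}|\delta(s)|_{\mathbb{H}}\,ds$ with $c_{1},c_{2}$ depending only on $T$ and $\mathcal{B}$, and Gronwall then gives $\sup_{t\in[0,T]}|\delta(t)|_{\mathbb{H}}\leq c_{3}\rho(v)\|\xi_{0}\|_{\mathbb{E}}$. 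To upgrade this to the $\mathbb{E}$-norm I would use $\|\delta(t)\|_{\mathbb{E}}\leq\sup_{s\in[t-\tau,t]}|\delta(s)|_{\mathbb{H}}$ for $t\geq\tau$, and for $t\in[0,\tau]$ observe that $\delta(\cdot)$ is the history-segment process $(w(t),w_{t})$ of a continuous $\mathbb{R}^{n}$-valued $w$ with $w\equiv0$ on $[-\tau,0]$ solving a linear delay ODE with bounded coefficients and forcing of size $\leq c_{4}\rho(v)\|\xi_{0}\|_{\mathbb{E}}$, so a sup-norm Gronwall gives $\|\delta(t)\|_{\mathbb{E}}\leq c_{5}\rho(v)\|\xi_{0}\|_{\mathbb{E}}$ directly. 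Either way the second bracket is small once $v$ is near $v_{0}$, uniformly in $t\in[0,T]$ and $\xi_{0}\in\mathcal{B}$, and combining the three estimates yields joint continuity.

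The hard part, to the extent there is one, is exactly the subtlety already met in Section~\ref{SEC: DelayDifferentiability}: the unboundedness of $C$. It is absorbed here precisely as there, by property \textbf{(MES)} (to control $\int_{0}^{t}|C\delta|$ by $\int_{0}^{t}|\delta|_{\mathbb{H}}$) and by the finite-dimensionality of $\operatorname{ran}C$ (to turn mere continuity of $F'$ into the uniform smallness of $\rho(v)$ along base points converging in $\mathbb{E}$). No ingredient beyond the a priori estimates and smoothing properties of Theorem~\ref{TH: DelaySemigroupTh} is needed, which is why the lemma may be asserted ``immediately''.
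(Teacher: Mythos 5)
Your proof is correct and fills in, in detail, exactly the route the paper sketches: the variation of constants formula for \eqref{EQ: LinearizedEquation} plus the Gronwall/\textbf{(MES)} estimates already used in Sections~\ref{SEC: ProofSemigroupTh}--\ref{SEC: DelayDifferentiability}, together with the finite-dimensionality of the range of $C$ to convert mere continuity of $F'$ into uniform smallness. The three-way decomposition and the promotion from an $\mathbb{H}$-estimate to an $\mathbb{E}$-estimate via a direct sup-norm Gronwall on $[0,\tau]$ are natural elaborations, not a different method.
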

Now let $\mathfrak{A} \subset \mathbb{E}$ be an invariant w.~r.~t. the semiflow $\varphi$ from Section \ref{SEC: DelayDifferentiability} finite-dimensional topological manifold such that the semiflow is invertible on $\mathfrak{A}$ and, consequently, in virtue of the Brouwer theorem on invariance of domain, it defines a flow on $\mathfrak{A}$. We also put $\vartheta^{t}(v) := \restr{\varphi^{t}}{\mathfrak{A}}(v)$ for $t \in \mathbb{R}$, $v \in \mathfrak{A}$ and $\mathcal{Q}:=\mathfrak{A}$. Then $(\mathcal{Q},\vartheta)$ is a flow. It is obvious that we have the cocycle property satisfied as
\begin{equation}
\psi^{t+s}(v,\xi) = \psi^{t}(\vartheta^{s}(v),\psi^{s}(v,\xi)), \text{ for all } t,s \geq 0, v \in \mathfrak{A}, \xi \in \mathbb{E}.
\end{equation}
From this and Lemma \ref{LEM: DelayLinearCocycle} it follows that $\psi$ is a cocycle in $\mathbb{E}$ over the base flow $\vartheta=\varphi$ in $\mathcal{Q} = \mathfrak{A}$. This allows us to apply results from Appendix \ref{APP: AdditionToReduction}.

Now we suppose that $F \in C^{1}$ with $F'$ globally bounded and the semiflow $\varphi$ (the same as in Section \ref{SEC: DelayDifferentiability}) satisfies conditions \textbf{(H1)},\textbf{(H2)} and \textbf{(H3)} from Appendix \ref{APP: AdditionToReduction}, i.~e. there exists an operator $P \in \mathcal{L}(\mathbb{E};\mathbb{E}^{*})$ such that for $V(v):=\langle v, Pv \rangle$ and some $\delta>0,\nu>0$ we have for all $v_{1},v_{2} \in \mathbb{E}$ and $r \geq 0$ (in terms of \textbf{(H3)} here we also put $\tau_{V} = 0$ for convenience) that
\begin{equation}
\label{EQ: SqueezingDelayFlow}
e^{2\nu r}V(\varphi^{r}(v_{1})-\varphi^{r}(v_{2})) - V(v_{1}-v_{2}) \leq -\delta \int_{0}^{r} e^{2\nu s}|\varphi^{s}(v_{1})-\varphi^{r}(v_{2})|^{2}_{\mathbb{H}}ds
\end{equation}
and there exists a splitting $\mathbb{E}=\mathbb{E}^{+} \oplus \mathbb{E}^{-}$ with $\dim \mathbb{E}^{-}=j$ and such that $P$ is positive on $\mathbb{E}^{+}$ and negative on $\mathbb{E}^{-}$. Moreover, $\mathbb{E}^{+}$ and $\mathbb{E}^{-}$ can be assumed to be $V$-orthogonal in the sense that $V(v) = V(v^{+}) + V(v^{-})$ for all $v \in \mathbb{E}$, where $v=v^{+}+v^{-}$ is the unique decomposition with $v^{+} \in \mathbb{E}^{+}$ and $v^{-} \in \mathbb{E}^{-}$ (see Appendix \ref{APP: AdditionToReduction}). Such an operator $P$ can be obtained with the aid of the Frequency Theorem (see \cite{Anikushin2020Freq} for a proof and detailed discussions which especially concern delay equations).
\begin{remark}
	In this section we discuss inertial manifolds in the classical phase space $\mathbb{E}$ (of course, we can work directly in $\mathbb{H}$). However, it seems crucial (at least from the point of view given by the Frequency Theorem \cite{Anikushin2020Freq}) to consider the weaker norm $|\cdot|_{\mathbb{H}}$ in the right-hand side of \eqref{EQ: SqueezingDelayFlow}.
\end{remark}

We will use the $V$-orthogonal projector $\Pi \colon \mathbb{E} \to \mathbb{E}^{-}$ defined by $Pv:=v^{-}$. From Theorem \ref{TH: DelaySemigroupTh} we have properties \textbf{(ULIP)} and \textbf{(COM)} satisfied for $\varphi$ and thus we may apply Theorem \ref{TH: ReductionTheoremGener} to get an invariant $j$-dimensional topological manifold $\mathfrak{A} \subset \mathbb{E}$, on which the semiflow is invertible\footnote{This follows from the fact that an amenable trajectory passing through a given point on $\mathfrak{A}$ is unique \cite{Anikushin2020Red,Anikushin2020Geom}. Note that the continuity of the inverse to $\varphi^{t}$ can be also shown without any appealing to the Brouwer theorem on invariance of domain. However, our construction of $\mathfrak{A}$ highly relies on this theorem.}. Our aim is to show that we may pass to the limit in \eqref{EQ: SqueezingDelayFlow} to get the corresponding inequality for the cocycle $(\psi,\vartheta)$ given by the linearization of $\varphi$ on $\mathfrak{A}$. Below we always assume that $F \in C^{1}(\mathbb{R}^{r};\mathbb{R}^{m})$ and $F'$ is globally bounded.

\begin{lemma}
	Let the semiflow $\varphi$ satisfy \textbf{(H1)},\textbf{(H2)} and \textbf{(H3)} as in \eqref{EQ: SqueezingDelayFlow}. Then for every $\xi \in \mathbb{E}$, $v \in \mathfrak{A}$ and $r \geq \tau$ we have
	\begin{equation}
	\label{EQ: SqueezingLinearCocycle}
	e^{2\nu r}V(L(r;v)\xi) - V(\xi) \leq -\delta \int_{0}^{r} e^{2\nu s}|L(s;v)\xi|^{2}_{\mathbb{H}}ds.
	\end{equation}
\end{lemma}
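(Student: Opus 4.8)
The plan is to obtain \eqref{EQ: SqueezingLinearCocycle} as a limiting case of \eqref{EQ: SqueezingDelayFlow} by approximating the linearized flow with difference quotients of the nonlinear semi-flow. Fix $v \in \mathfrak{A}$, $\xi \in \mathbb{E}$ and $r \geq \tau$. Since $\mathfrak{A}$ is invariant and the flow on it is invertible, the whole trajectory $\vartheta^{s}(v)$, $s \in \mathbb{R}$, lies in $\mathfrak{A}$, which is a compact (being a finite-dimensional manifold that is $\varphi^{r}$-invariant, hence, by \textbf{(EMB)}, bounded in $\mathbb{E}$; more precisely one works on the compact piece $\cup_{s\in[-\tau,r]}\vartheta^{s}(v)$) subset of $\mathbb{E}$. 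For small $h > 0$ set $v_{h} := v + h\xi$ (if $\xi$ is not in a position where $v_{h}$ stays in a fixed bounded set this is automatic for $h$ small) and consider the difference quotient $\xi_{h}(s) := h^{-1}\left(\varphi^{s}(v_{h}) - \varphi^{s}(v)\right)$. Apply \eqref{EQ: SqueezingDelayFlow} to the pair $v_{1} = v_{h}$, $v_{2} = v$, divide by $h^{2}$, and use that $V$ is a bounded quadratic form on $\mathbb{E}$ together with the fact that $\varphi^{s}(v_{h}) - \varphi^{s}(v) \in \mathbb{E}$ depends continuously on everything (property \textbf{(ULIP)} in the $\mathbb{E}$-norm, valid here since $v, v_{h} \in \mathbb{E}$), to rewrite the inequality entirely in terms of $\xi_{h}$:
\begin{equation*}
e^{2\nu r}V(\xi_{h}(r)) - V(\xi_{h}(0)) \leq -\delta \int_{0}^{r} e^{2\nu s}|\xi_{h}(s)|^{2}_{\mathbb{H}}ds.
\end{equation*}

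The next step is to pass to the limit $h \to 0+$. Here I would invoke Lemma \ref{LEM: DelayDifferentiabilityLemma}: with $\mathcal{B}$ a fixed bounded subset of $\mathbb{E}$ containing $v$ and all $v_{h}$ (which it does for $h$ small, since $\|v_h - v\|_{\mathbb{E}} = h\|\xi\|_{\mathbb{E}}$), it gives $|\varphi^{s}(v_{h}) - \varphi^{s}(v) - V(s;v;v_{h}-v)|_{\mathbb{H}} \leq M_{d} h^{2}\|\xi\|_{\mathbb{H}}^{2}$ for $s \in [0,r]$, and by linearity $V(s;v;v_{h}-v) = h\,V(s;v;\xi) = h\,L(s;v)\xi$. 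Hence $\xi_{h}(s) \to L(s;v)\xi$ in $\mathbb{H}$ uniformly on $[0,r]$ as $h \to 0+$. Since $\xi_{h}(0) = \xi$ exactly, the middle term is constant; the left boundary term $V(\xi_{h}(r))$ converges to $V(L(r;v)\xi)$ because $V$ extends to a bounded form on $\mathbb{H}$-limits along $\mathbb{E}$-bounded sequences (more carefully: $\xi_h(r)$ is bounded in $\mathbb{E}$ by \textbf{(ULIP)} and converges in $\mathbb{H}$, so it converges weakly in $\mathbb{E}$, and one checks $V$ is continuous under this convergence, or one simply notes that for $r \geq \tau$ the $\mathbb{E}$-convergence is genuine by \textbf{(EMB)} applied to the linearized equation via Lemma \ref{LEM: DelayLinearCocycle}); and the integral on the right passes to the limit by dominated convergence, the integrand $e^{2\nu s}|\xi_{h}(s)|^{2}_{\mathbb{H}}$ being uniformly bounded on $[0,r]$ and convergent pointwise. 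This yields \eqref{EQ: SqueezingLinearCocycle}.

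The main obstacle I anticipate is the justification that $V(\xi_{h}(r)) \to V(L(r;v)\xi)$, i.e.\ the continuity of the quadratic form $V$ along the relevant limit. The form $V(w) = \langle w, Pw\rangle$ is defined on $\mathbb{E}$ with $P \in \mathcal{L}(\mathbb{E};\mathbb{E}^{*})$, so it is only $\mathbb{E}$-norm continuous, not $\mathbb{H}$-continuous; Lemma \ref{LEM: DelayDifferentiabilityLemma} only gives $\mathbb{H}$-convergence of $\xi_{h}(s)$ on $[0,r]$. This is precisely why the hypothesis $r \geq \tau$ appears: by the smoothing embedding \textbf{(EMB)} (transported to the linearized problem, whose solving operators coincide with those of an equation of the form \eqref{EQ: AbstractDelayHilberSpace} with $\widetilde{F}$ in place of $F$, cf.\ Lemma \ref{LEM: LinearizatinoWellPosed} and Lemma \ref{LEM: DelayLinearCocycle}), for $r \geq \tau$ one has $\xi_{h}(r), L(r;v)\xi \in \mathbb{E}$ with $\|\xi_{h}(r) - L(r;v)\xi\|_{\mathbb{E}} \leq M_{1}e^{\varkappa r}|\xi_{h}(0) - \xi|_{\mathbb{H}} + (\text{quadratic error}) \to 0$; more directly, applying Lemma \ref{LEM: DelayDifferentiabilityLemma}-type estimates in the $\mathbb{E}$-norm on the time interval shifted by $\tau$ upgrades the convergence to $\mathbb{E}$. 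Once $\mathbb{E}$-convergence of $\xi_h(r)$ is in hand, continuity of $V$ is immediate. I would also remark, following Remark \ref{REM: UniformConstant} and the cocycle identity, that it then suffices to prove the inequality for $r \geq \tau$ and the general statement for the cocycle $(\psi,\vartheta)$ follows by concatenation.
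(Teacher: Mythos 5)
Your proposal is correct and follows essentially the same route as the paper: apply \eqref{EQ: SqueezingDelayFlow} with $v_{1}=v+h\xi$, $v_{2}=v$, divide by $h^{2}$, and pass to the limit $h\to 0$ using Lemma \ref{LEM: DelayDifferentiabilityLemma} for the convergence of the difference quotients, with the smoothing from $\mathbb{H}$ to $\mathbb{E}$ for $r\geq\tau$ handling the boundary term $V(\xi_h(r))$. The only difference is that you spell out the dominated-convergence step for the integral and discuss the $\mathbb{E}$-continuity of $V$ in somewhat more detail; the paper leaves these routine.
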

\begin{proof}
	Let $\xi \in \mathbb{E}$ and $v \in \mathfrak{A}$ be fixed. Consider \eqref{EQ: SqueezingDelayFlow} with $v_{1}:=v+h\xi$ and $v_{2}:=v$, divide both sides by $h^{2}$ and take it to the limit as $h \to 0$ to get \eqref{EQ: SqueezingLinearCocycle}. Such a passage is justified by Theorem \ref{LEM: DelayDifferentiabilityLemma}.
\end{proof}
\begin{remark}
	In fact, the restriction $r \geq \tau$ for \eqref{EQ: SqueezingLinearCocycle} is unnecessary if we use differentiability results for $\varphi^{t}$ in $\mathbb{E}$ (see Theorem 4.1, Section 2.4 in \cite{Hale1977}).
\end{remark}

Thus, the cocycle $\psi$, which is obtained through the linearization of $\varphi$ on $\mathfrak{A}$, also satisfies the hypotheses of Theorem \ref{TH: ReductionTheoremGener}, which gives us a family of amenable sets for $v \in \mathfrak{A}$, which we denote by $\mathfrak{A}'(v)$. Clearly, $\mathfrak{A}'(v)$ is a $j$-dimensional subspace of $\mathbb{E}$. Let us consider the maps $\Phi \colon \mathbb{E}^{-} \to \mathbb{E}$ and $\widetilde{\Phi} \colon \mathfrak{A} \times \mathbb{E}^{-} \to \mathbb{E}$, where $\Phi(\Pi v) = v$ for all $v \in \mathfrak{A}$ and $\widetilde{\Phi}(v, \Pi \xi) = \xi$ for all $v \in \mathfrak{A}$ and $\xi \in \mathfrak{A}'(v)$. Clearly, $\widetilde{\Phi}(v,\zeta)$ is linear in $\zeta$ so we will usually write $\widetilde{\Phi}(v)\zeta$ instead of $\widetilde{\Phi}(v,\zeta)$. One should think of $\mathfrak{A}'(v)$ as a tangent space to $\mathfrak{A}$ at $v$ and think of $\widetilde{\Phi}(v)$ as the differential of $\Phi$ at $\zeta = \Pi v$. We shall not give a proof of this, referring the interested reader to \cite{Anikushin2020Geom}. From this one can obtain the following theorem.
\begin{theorem}
	\label{TH: SmoothnessManifoldDelay}
	Let the semiflow $\varphi$ satisfy \textbf{(H1)},\textbf{(H2)} and \textbf{(H3)} as in \eqref{EQ: SqueezingDelayFlow}. Then $\Phi \colon \mathbb{E}^{-} \to \mathbb{E}$ is $C^{1}$-differentiable and $\Phi'(\zeta) = \widetilde{\Phi}(\Phi(\zeta))$ for all $\zeta \in \mathbb{E}^{-}$. Moreover, $\mathfrak{A}$ is a $C^{1}$-differentiable submanifold in $\mathbb{E}$; its tangent space at any point $v \in \mathfrak{A}$ is given by $\mathfrak{A}'(v)$; the flow $\varphi$ on $\mathfrak{A}$ is $C^{1}$-smooth and the differential of $\varphi^{t}$ at $v \in \mathfrak{A}$ is given by the map $L(t;v) \colon \mathfrak{A}'(v) \to \mathfrak{A}'(\varphi^{t}(v))$ for all $t \in \mathbb{R}$.
\end{theorem}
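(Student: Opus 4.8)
The plan is to prove $C^1$-differentiability of $\Phi$ by combining the Lipschitz regularity of $\Phi$ (which already follows from the construction in \cite{Anikushin2020Red}) with the Rademacher theorem, and then to identify the almost-everywhere derivative with the candidate $\widetilde{\Phi}(\Phi(\zeta))$, upgrading ``a.e.'' to ``everywhere'' by a continuity argument. First I would record that $\Phi \colon \mathbb{E}^{-} \to \mathbb{F}_{1}$ is Lipschitz: this is part of the output of Theorem \ref{TH: ReductionTheoremGener} applied both to $\varphi$ (giving the graph $\mathfrak{A}$) and, crucially, to the linearized cocycle $\psi$ (giving the subspaces $\mathfrak{A}'(v)$ and the linear maps $\widetilde{\Phi}(v)$), together with the extra smoothing in \textbf{(EMB)} that lets us pass from an $\mathbb{H}$-Lipschitz estimate to an $\mathbb{F}_1$-Lipschitz estimate once we wait time $\tau$ (and a further unit of time to gain one derivative, using that $\mathfrak A$ consists of complete trajectories and $F \in C^2$). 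Since $\mathbb{E}^{-}$ is finite-dimensional, Rademacher's theorem gives a full-measure set $\Omega \subset \mathbb{E}^{-}$ on which $\Phi$ is differentiable; I would invoke the auxiliary lemma promised in Appendix \ref{APP: LipschitzMapsLemmas} here.

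The heart of the argument is to show that at a point $\zeta_0 \in \Omega$ of differentiability the derivative $\Phi'(\zeta_0)$ must equal $\widetilde{\Phi}(\Phi(\zeta_0))$. The idea is that the tangent directions to $\mathfrak{A}$ at $v_0 := \Phi(\zeta_0)$ obtained by differentiating curves in $\mathfrak{A}$ are precisely the amenable (backward-bounded, in the $V$-weighted sense) solutions of the linearized equation through $v_0$, i.e. exactly $\mathfrak{A}'(v_0)$. Concretely: if $\Phi$ is differentiable at $\zeta_0$, then for $\zeta$ near $\zeta_0$ we have $\Phi(\zeta) - \Phi(\zeta_0) = \Phi'(\zeta_0)(\zeta - \zeta_0) + o(|\zeta-\zeta_0|)$; applying $\varphi^t$ and using the quasi-differentiability from Lemma \ref{LEM: DelayDifferentiabilityLemma} (which controls $\varphi^t(v_0 + h) - \varphi^t(v_0) - L(t;v_0)h$ by $|h|^2_{\mathbb{H}}$ on bounded sets), one sees that $L(t;v_0)\bigl(\Phi'(\zeta_0)(\zeta-\zeta_0)\bigr)$ stays a bounded distance from $\mathfrak{A}$ for all $t \in \mathbb{R}$ in the appropriate weighted norm, hence lies in $\mathfrak{A}'(v_0)$ by uniqueness of amenable trajectories for $\psi$. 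Running this along both time directions (using invertibility of $\varphi$ on $\mathfrak{A}$ and of $\psi$ on the $\mathfrak A'(v)$-fibres) and using that $\Phi'(\zeta_0)$ restricted to $\mathbb{E}^{-}$ is injective with $\Pi \Phi'(\zeta_0) = \mathrm{id}_{\mathbb{E}^-}$ (differentiate $\Pi\Phi = \mathrm{id}$), one gets $\operatorname{range}\Phi'(\zeta_0) = \mathfrak{A}'(v_0)$ and then $\Phi'(\zeta_0) = \widetilde\Phi(v_0)$ since both are the inverse of $\restr{\Pi}{\mathfrak A'(v_0)}$.

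Next I would show $\zeta \mapsto \widetilde{\Phi}(\Phi(\zeta))$ is continuous on all of $\mathbb{E}^{-}$. Continuity of $v \mapsto \mathfrak{A}'(v)$ (equivalently of $v\mapsto \widetilde\Phi(v)$) follows from the continuity of the linear cocycle $\psi$ in its parameters (Lemma \ref{LEM: DelayLinearCocycle}) together with the uniqueness/uniform-estimate structure in Theorem \ref{TH: ReductionTheoremGener} — this is the same continuous-dependence mechanism used in \cite{Anikushin2020Red}, now applied to $\psi$; and $\Phi$ is itself continuous (indeed Lipschitz). Since $\Phi' = \widetilde\Phi\circ\Phi$ on the dense full-measure set $\Omega$ and the right-hand side is continuous, a standard fact about Lipschitz maps whose a.e.-derivative extends continuously implies $\Phi$ is everywhere $C^1$ with $\Phi'(\zeta) = \widetilde\Phi(\Phi(\zeta))$ on $\mathbb E^-$; I would cite the lemma in Appendix \ref{APP: LipschitzMapsLemmas} for this step. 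The remaining assertions are then formal: $\mathfrak A = \Phi(\mathbb E^-)$ is a $C^1$ submanifold of $\mathbb F_1$ (hence of $\mathbb E$ and $\mathbb H$) with tangent space $\operatorname{range}\Phi'(\zeta) = \mathfrak A'(v)$ at $v = \Phi(\zeta)$; the flow $\varphi^t$ on $\mathfrak A$ is $C^1$ because in the chart $\zeta$ it is $\zeta \mapsto \Pi\varphi^t(\Phi(\zeta))$, a composition of $C^1$ maps (here $\varphi^t$ is differentiable between the relevant function spaces by \textbf{(REG)}/Lemma \ref{LEM: DelayDifferentiabilityLemma}); and differentiating the identity $\varphi^t\circ\Phi = \Phi\circ(\Pi\varphi^t\Phi(\cdot))$ identifies $d(\varphi^t)|_v$ on $\mathfrak A'(v)$ with $L(t;v)$, which indeed maps $\mathfrak A'(v)$ onto $\mathfrak A'(\varphi^t(v))$ by the cocycle property \textbf{(QD2)}.

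The main obstacle is the middle step: rigorously pinning down $\Phi'(\zeta_0) = \widetilde\Phi(\Phi(\zeta_0))$ at points of differentiability. The subtlety is that one only knows differentiability a.e. and must match two objects ($\Phi'$, coming from the geometry of the graph, and $\widetilde\Phi$, coming from the linearized cocycle) that are defined by quite different limiting procedures; the bridge is the quadratic error estimate of Lemma \ref{LEM: DelayDifferentiabilityLemma} together with the uniqueness of amenable trajectories for $\psi$, and care is needed because the relevant norms ($\mathbb H$ for the quasi-differentiability estimate, $\mathbb E$ or $\mathbb F_1$ for the manifold regularity) differ and must be reconciled via \textbf{(EMB)} — essentially all the time-$\tau$ (and time-$2\tau$, time-$(\tau+1)$) smoothing bookkeeping happens here.
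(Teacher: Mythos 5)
Your overall blueprint---Lipschitz regularity, Rademacher, identify the a.e.\ derivative with $\widetilde\Phi\circ\Phi$, then promote to $C^1$ by continuity via the appendix lemma---matches the paper's. But there are two real gaps in the middle.

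First, Rademacher does not apply directly to $\Phi\colon\mathbb{E}^-\to\mathbb{F}_1$, because $\mathbb{F}_1=C^1([-\tau,0];\mathbb{R}^n)$ does not have the Radon--Nikodym property, so a Lipschitz map into it need not be differentiable a.e.\ (Appendix~\ref{APP: LipschitzMapsLemmas} is explicit that this is why a.e.\ differentiability is \emph{assumed} in Lemma~\ref{LEM: LipContDiff}, not derived). The paper closes this by first establishing that $\Phi$ is locally Lipschitz into $\mathbb{F}_2$, then into the Hilbert space $\mathbb{W}:=W^{2,2}(-\tau,0;\mathbb{R}^n)$, where Rademacher \emph{does} apply; only then does the embedding $\mathbb{W}\hookrightarrow\mathbb{F}_1$ give a.e.\ differentiability in $\mathbb{F}_1$. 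Your proposal never mentions this detour through $\mathbb{W}$, and the extra order of Lipschitz regularity (into $\mathbb{F}_2$, using $F\in C^2$) that makes it possible; without it the Rademacher step simply fails.

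Second, and more seriously, your argument that differentiability of $\Phi$ at a single point $\zeta_0$ forces $\Phi'(\zeta_0)=\widetilde\Phi(\Phi(\zeta_0))$ does not go through. You want to show $\operatorname{range}\Phi'(\zeta_0)\subset\mathfrak A'(\Phi(\zeta_0))$, i.e.\ that every $\Phi'(\zeta_0)\eta$ lies on an \emph{amenable} (backward square-integrable with weight $e^{2\nu s}$) trajectory of $\psi$. The quadratic-error estimate of Lemma~\ref{LEM: DelayDifferentiabilityLemma} only controls forward time $t\geq 0$; pushing forward gives $L(t;v_0)\Phi'(\zeta_0)\eta$ for $t\geq 0$, but amenability is a constraint in backward time, and you cannot invoke invertibility of $\psi$ on $\mathfrak A'(v)$-fibres before you already know the vector is in the fibre---that is circular. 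What Lemma~\ref{LEM: DelayPhiDerivative1} actually gives from differentiability at $\zeta_0$ alone is only $\operatorname{range}\Phi'(\zeta_0)\subset\mathcal C_V$, i.e.\ $V\leq 0$, which is strictly weaker than membership in $\mathfrak A'(v_0)$. The paper's route is Lemma~\ref{LEM: DerCoincidenceCriteria}: assume differentiability at a backward sequence $\zeta(t_k;\zeta_0)$ with $t_k\to-\infty$, use Lemma~\ref{LEM: DelayPhiDerivative2} (forward propagation of differentiability) to get well-defined $j$-dimensional spaces $\mathbb{E}_{\zeta(t_k;\zeta_0)}$, build approximating trajectories $\xi_k$ starting from those spaces at time $t_k$, and extract a limiting backward trajectory that is amenable; then the identity follows. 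That the hypothesis of Lemma~\ref{LEM: DerCoincidenceCriteria} holds a.e.\ is proved separately by observing that the flow $\zeta^t$ is bi-Lipschitz (so preserves full measure) and intersecting $\mathcal B_k=\zeta^k(\mathcal B)$ over $k$. Your sketch omits this mechanism entirely, and the replacement argument you propose does not establish the amenability that is needed.

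The remainder of your proposal (continuity of $\widetilde\Phi\circ\Phi$ via Lemma~\ref{LEM: DelayLipschitzFunctions}, upgrading a.e.\ to everywhere via Lemma~\ref{LEM: LipContDiff}, reading off the manifold/flow smoothness via the inertial form and Lemma~\ref{LEM: ProjectedEquationsDelay}) matches the paper.
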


It is known that $C^{1}$-differentiability of $\mathfrak{A}$ is the maximum that one may achieve under the classical Spectral Gap Condition (which is included in our theory as it is shown in \cite{Anikushin2020FreqParab}) for semilinear parabolic equations and to obtain the differentiability of higher orders one needs a more restrictive condition (see S.-N.~Chow and G.R.~Sell \cite{ChowLuSell1992}; R.~Rosa and R.~Temam \cite{RosaTemam1996}). Results in the theory of normally hyperbolic manifolds (see, for example, N.~Fenichel \cite{Fenichel1971}) suggests that $\mathfrak{A}$ will be $C^{k}$-smooth if \textbf{(H3)} is satisfied for two parameters $\nu_{2} > \nu_{1}$ (with possibly different constants, operators and spaces, but with the same $j$) such that $\nu_{2}/\nu_{1} > k$. However, such large spectral gaps are rarely seen in nonlocal applications of the theory.

For the normal hyperbolicity of $\mathfrak{A}$ it is necessary to require the following property.
\begin{description}
	\item[\textbf{(HYP)}] There exists $0 < \nu' < \nu$ such that the semiflow $\varphi$ satisfies \textbf{(H1)} with a possibly different operator $P$, \textbf{(H2)} with the same $j$ and \textbf{(H3)} with $\nu$ changed to $\nu'$ and possibly different constants $\delta$ and $\tau_{V}$.
\end{description}
This property is satisfied in applications, where conditions for the existence of $P$ are given by strict inequalities so we can always vary $\nu$ a bit (see Section \ref{SEC: ExampleDelayIM}).

In \cite{Anikushin2020Geom} it is shown that the submanifold $\mathfrak{A}$ has the so-called exponential tracking property (and the exponent of attraction is nothing more than $\nu$ from \textbf{(H3)}) and it is normally hyperbolic under \textbf{(HYP)}. This, in particular, answers the questions of R.A.~Smith \cite{Smith1994} on the differentiability and normal hyperbolicity posed for reaction-diffusion equations (the geometric context is also applicable for such problems \cite{Anikushin2020FreqParab}). Let us give brief details of the constructions. Namely, for each $v_{0} \in \mathfrak{A}$ there is the \textit{stable fibre} over $v_{0}$ given by
\begin{equation}
	\mathfrak{A}^{st}(v_{0}) := \{ v \in \mathbb{E} \ | \ \int_{0}^{+\infty}e^{2\nu s}|\varphi^{s}(v)-\varphi^{s}(v_{0}) |^{2}_{\mathbb{H}} < +\infty  \}.
\end{equation}
Let $\Pi^{+} := \operatorname{Id} - \Pi$ be the complementary projector. Then one can show that $\Pi^{+} \colon \mathfrak{A}^{st}(v_{0}) \to \mathbb{E}^{+}$ is a homeomorphism and $\mathfrak{A}^{st}(v_{0})$ is a $C^{1}$-differentiable $\mathbb{E}^{+}$-submanifold in $\mathbb{E}$. These stable fibres foliate $\mathbb{E}$ and, consequently, for every $v \in \mathbb{E}$ there exists a unique $\Pi^{c}(v) \in \mathfrak{A}$ such that $v \in \mathfrak{A}^{st}(\Pi^{c}(v))$. The continuous nonlinear map $\Pi^{c}(\cdot) \colon \mathbb{E} \to \mathfrak{A}$ is called the \textit{central projector} and the triple $(\mathbb{E},\mathfrak{A},\Pi^{c})$ is a fiber bundle. Analogously, for the linearization cocycle $\psi$ one can define the stable space at $v_{0} \in \mathfrak{A}$ as
\begin{equation}
	\mathfrak{A}^{st}_{\psi}(v_{0}) := \{ \xi \in \mathbb{E} \ | \ \int_{0}^{+\infty}e^{2\nu s}| L(s;v_{0})\xi|^{2}_{\mathbb{H}} < +\infty \}.
\end{equation}
It can be verified that $\mathbb{E} = \mathfrak{A}^{st}_{\psi}(v) \oplus \mathfrak{A}'(v)$ for any $v \in \mathfrak{A}$. Such a decomposition defines a normally hyperbolic structure for $\mathfrak{A}$ with the corresponding \textit{central projectors} $\Pi^{c}_{v}$, where $\operatorname{Ran}\Pi^{c}_{v} = \mathfrak{A}'(v)$ and $\operatorname{Ker}\Pi^{c}_{v} = \mathfrak{A}^{st}_{\psi}(v)$. These constructions does not depend on the exponents from \textbf{(HYP)} and it can be shown that for some constant $M>0$ we have for all $t \geq 0$, $\xi \in \mathbb{E}$ the inequalities
\begin{equation}
	\begin{split}
		\| L(t;v) (\operatorname{Id}-\Pi^{c}_{v}) \xi \|_{\mathbb{E}} \leq M e^{-\nu t} \|\xi\|_{\mathbb{E}}, \\
		\| L(-t;v) \Pi^{c}_{v}\xi \|_{\mathbb{E}} \leq M e^{\nu' t} \| \xi \|_{\mathbb{E}}.
	\end{split}
\end{equation}
For $j=2$ these properties allow to extend results of R.A.~Smith concerned with the Poincar\'{e}-Bendixson theory \cite{Smith1992,Smith1994}, isolated periodic orbits \cite{Smith1984IsolatedOrbits} and the Poincar\'{e} index theorem \cite{Smith1984IndexTheorem}.

From the $V$-orthogonal projector $\Pi$ we obtain a chart on $\mathfrak{A}$. However, to study coordinate representation of the vector field on $\mathfrak{A}$ (i.e. inertial forms) it is convenient to use other charts on $\mathfrak{A}$. Namely, under \textbf{(H3)} we call a bounded projector $\Pi^{a}$ in $\mathbb{E}$ \textit{admissible} if for any nonzero $v \in \operatorname{Ran}\Pi^{a}$ we have $V(v) < 0$ and for any nonzero $v \in \operatorname{Ker}\Pi^{a}$ we have $V(v) > 0$. Under the conditions of Theorem \ref{TH: SmoothnessManifoldDelay} it turns out that $\Pi^{a} \colon \mathfrak{A} \to \operatorname{Ran}\Pi^{a}$ is also a homeomorphism and $\Pi^{a} \colon \mathfrak{A}'(v) \to \operatorname{Ran}\Pi^{a}$ is an isomorphism for any $v \in \mathfrak{A}$. Consequently, due to the $C^{1}$-differentiability of $\mathfrak{A}$ and linearity of $\Pi^{a}$, the inverse map $\Phi^{a}$ is also $C^{1}$-differentiable and globally Lipschitz. Moreover, transition maps between such admissible charts are globally Lipschitz and $C^{1}$-differentiable (see \cite{Anikushin2020Geom}).

The following lemma describes the dynamics of $\varphi$ on $\mathfrak{A}$ and its linearization $L(t;v)$ on $\mathfrak{A}'(v)$ by ordinary differential equations in $\mathbb{E}^{-}$. Equation \eqref{EQ: ProjectedNonlinearDelay} is called the \textit{inertial form} of $\varphi$ on $\mathfrak{A}$.  We give a sketch of its proof based on the above introduced arguments.
\begin{lemma}
	\label{LEM: ProjectedEquationsDelay}
	Let the semiflow $\varphi$ satisfy \textbf{(H1)},\textbf{(H2)} and \textbf{(H3)} as in \eqref{EQ: SqueezingDelayFlow}. Suppose there exists a spectral projector for $A$ which is admissible. Then
	
	1. There is a one-to-one correspondence between the trajectories of the flow $\varphi$ on $\mathfrak{A}$ and solutions of the following ODE in $\mathbb{E}^{-}$:
	\begin{equation}
	\label{EQ: ProjectedNonlinearDelay}
	\dot{\zeta}(t) = \Pi \left[ A \Phi(\zeta(t)) + BF(C\Phi(\zeta(t))) \right] =: f(\zeta(t)),
	\end{equation}
	where the vector field $f \colon \mathbb{E}^{-} \to \mathbb{E}^{-}$ is $C^{1}$-differentiable. This correspondence is given by the identities $\Phi(\zeta(t;\zeta_{0}))=\varphi^{t}(\Phi(\zeta_{0}))$ and $\Pi \varphi^{t}(\Phi(\zeta_{0})) = \zeta(t;\zeta_{0})$ for all $t \in \mathbb{R}$ and $\zeta_{0} \in \mathbb{E}^{-}$.
	
	2. There is a one-to-one correspondence between the trajectories of the cocycle $\psi$ and solutions of the following ODE in $\mathbb{E}^{-}$:
	\begin{equation}
	\label{EQ: ProjectedLinearDelay}
	\dot{\eta}(t) = \Pi [A + B F'(C\varphi^{t}(v))C] \widetilde{\Phi}(\varphi^{t}(v))\eta(t) =: A_{L}(\varphi^{t}(v)) \eta(t),
	\end{equation}
	where the linear operator $A_{L}(v) \colon \mathbb{E}^{-} \to \mathbb{E}^{-}$ depend continuously on $v \in \mathfrak{A}$ and norms of $A_{L}(v)$ are uniformly bounded in $v \in \mathfrak{A}$. This correspondence is given by the identities $\widetilde{\Phi}(\varphi^{t}(v))\eta(t;\eta_{0};v) = L(t;v) \widetilde{\Phi}(v)\eta_{0}$ and $\Pi L(t;v) \widetilde{\Phi}(v)\eta_{0} = \eta(t;\eta_{0};v) $ for all $t \in \mathbb{R}$, $v \in \mathfrak{A}$ and $\eta_{0} \in \mathbb{E}^{-}$.
\end{lemma}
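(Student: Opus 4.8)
The plan is to derive both systems \eqref{EQ: ProjectedNonlinearDelay} and \eqref{EQ: ProjectedLinearDelay} by applying the $V$-orthogonal projector $\Pi\colon\mathbb{E}\to\mathbb{E}^{-}$ to the equations already satisfied by the complete trajectories of $\varphi$ on $\mathfrak{A}$ and by the amenable trajectories of the linearized cocycle $\psi$, and to obtain the converse directions (and the one-to-one correspondences) from uniqueness, once $f$, resp. $A_{L}(v)$, is known to be Lipschitz/bounded. The one genuinely delicate point is that $A$ is unbounded: one must check that the objects to which $\Pi$ is applied really lie in $\mathbb{E}$, and that all quantitative bounds are available in the norm of $\mathbb{F}_{1}$ (on which $A$ acts boundedly), for which Lemma~\ref{LEM: DelayLipschitzFunctions} supplies exactly what is needed.

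\emph{Part 1.} Using $\mathfrak{A}\subset\mathbb{F}_{2}\subset\mathcal{D}(A)$, $\Phi\circ\Pi=\mathrm{id}$ on $\mathfrak{A}$ and $\Pi\circ\Phi=\mathrm{id}$ on $\mathbb{E}^{-}$, I would first note that a complete trajectory $v(\cdot)\colon\mathbb{R}\to\mathfrak{A}$ has the form $v(t)=(x(t),x_{t})$ with $x\in C^{2}(\mathbb{R};\mathbb{R}^{n})$ a classical solution of \eqref{EQ: ClassicalDelayEquation}; then $Av(t)+BF(Cv(t))$ equals the embedding of $\dot{x}_{t}$, so $\dot{v}(t)=Av(t)+BF(Cv(t))$ lies in $\mathbb{E}$ and is continuous in $t$, whence (together with \textbf{(REG)}) $v\in C^{1}(\mathbb{R};\mathbb{E})$. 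Applying $\Pi$ and using $v(t)=\Phi(\Pi v(t))$ shows $\zeta(\cdot):=\Pi v(\cdot)$ solves \eqref{EQ: ProjectedNonlinearDelay}. For the global Lipschitzness of $f$: write $f(\zeta)=\Pi g(\Phi(\zeta))$ with $g(v):=Av+BF(Cv)$, which for $v\in\mathfrak{A}$ is the embedding of $\dot{x}_{0}$; for $v_{i}=\Phi(\zeta_{i})$ with classical solutions $x_{i}$ the delay equation itself gives $\dot{x}_{1}(\theta)-\dot{x}_{2}(\theta)=\widetilde{A}(x_{1,\theta}-x_{2,\theta})+\widetilde{B}(F(\widetilde{C}x_{1,\theta})-F(\widetilde{C}x_{2,\theta}))$, hence $\|g(v_{1})-g(v_{2})\|_{\mathbb{E}}\le c\sup_{s\in[-2\tau,0]}\|v_{1}(s)-v_{2}(s)\|_{\mathbb{E}}$, and the argument from the proof of part~1 of Lemma~\ref{LEM: DelayLipschitzFunctions} (the squeezing inequality \eqref{EQ: SqueezingDelayFlow} along the two amenable trajectories, a mean value argument on a far negative interval, and the smoothing estimate \textbf{(EMB)}) bounds the last supremum by $C\|\zeta_{1}-\zeta_{2}\|_{\mathbb{E}}$. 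The converse direction and the identities $\Phi(\zeta(t;\zeta_{0}))=\varphi^{t}(\Phi(\zeta_{0}))$, $\Pi\varphi^{t}(\Phi(\zeta_{0}))=\zeta(t;\zeta_{0})$ then follow by taking the complete trajectory $w(\cdot)$ of $\varphi$ on $\mathfrak{A}$ through $\Phi(\zeta(0))$ (which exists since $\varphi$ is invertible on $\mathfrak{A}$), noting $\Pi w(\cdot)$ solves \eqref{EQ: ProjectedNonlinearDelay} with the same initial value, and invoking uniqueness for the Lipschitz field $f$.

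\emph{The conditional $C^{1}$ statement and Part 2.} Assuming $\Phi\in C^{1}$ with $\Phi'(\zeta)=\widetilde{\Phi}(\Phi(\zeta))$, I would differentiate $f=\Pi\circ g\circ\Phi$ as follows. With $\phi_{\zeta}$ the second component of $\Phi(\zeta)$, $g(\Phi(\zeta))$ is the embedding of $\phi_{\zeta}'$, and since $(\phi(0),\phi)\mapsto\phi'$ is bounded linear from $\mathbb{F}_{1}$ to $\mathbb{E}$, $\zeta\mapsto g(\Phi(\zeta))$ is $C^{1}$ into $\mathbb{E}$ with directional derivative the embedding of $\psi_{\zeta,\eta}'$, where $\psi_{\zeta,\eta}$ is the second component of $\widetilde{\Phi}(\Phi(\zeta))\eta$. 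Because $\widetilde{\Phi}(\Phi(\zeta))\eta\in\mathfrak{A}'(\Phi(\zeta))$ is the time-$0$ history of an amenable classical solution of \eqref{EQ: LinearizedEquation}, one has $\psi_{\zeta,\eta}'(0)=[\widetilde{A}+\widetilde{B}F'(\widetilde{C}\phi_{\zeta})\widetilde{C}]\psi_{\zeta,\eta}$, so the embedding of $\psi_{\zeta,\eta}'$ equals $[A+BF'(C\Phi(\zeta))C]\widetilde{\Phi}(\Phi(\zeta))\eta\in\mathbb{E}$; applying $\Pi$ gives $f'(\zeta)=A_{L}(\Phi(\zeta))$, and continuity of $f'$ follows from continuity of $\Phi$, of $v\mapsto\widetilde{\Phi}(v)\in\mathcal{L}(\mathbb{E}^{-};\mathbb{F}_{1})$ (Lemma~\ref{LEM: DelayLipschitzFunctions}) and of $v\mapsto F'(Cv)$. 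Part~2 is the linear analogue: an amenable trajectory $\xi(\cdot)$ of $\psi$ along $\varphi^{\cdot}(v)$ lies in $\mathfrak{A}'(\varphi^{t}(v))\subset\mathbb{F}_{1}\subset\mathcal{D}(A)$ and, being a classical solution of \eqref{EQ: LinearizedEquation}, satisfies $\dot{\xi}(t)=[A+BF'(C\varphi^{t}(v))C]\xi(t)$ with right-hand side in $\mathbb{E}$; since $\xi(t)=\widetilde{\Phi}(\varphi^{t}(v))\Pi\xi(t)$, applying $\Pi$ shows $\eta(t):=\Pi\xi(t)$ solves \eqref{EQ: ProjectedLinearDelay}, and the converse direction and the stated identities come again from uniqueness. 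That $A_{L}(v)=\Pi[A+BF'(Cv)C]\widetilde{\Phi}(v)$ is a well-defined bounded operator on the finite-dimensional $\mathbb{E}^{-}$, with $\|A_{L}(v)\|$ bounded uniformly in $v\in\mathfrak{A}$ and $v\mapsto A_{L}(v)$ continuous, follows from the uniform bound $\|\widetilde{\Phi}(v)\|_{\mathcal{L}(\mathbb{E}^{-};\mathbb{F}_{1})}\le C_{2}$, boundedness of $A$ on $\mathbb{F}_{1}$ and of $F'$, and continuity of $v\mapsto\widetilde{\Phi}(v)$ and $v\mapsto F'(Cv)$.

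The hard part, and the reason the statement is not a formality, is precisely the two-scale bookkeeping above: because $A$ is unbounded one cannot naively apply $\Pi$ to $A\Phi(\zeta)$ nor differentiate it in the $\mathbb{H}$-norm, and one must exploit that $\Phi$ and $\widetilde{\Phi}$ take values in $\mathbb{F}_{1}$ — where $A$ is bounded and Lemma~\ref{LEM: DelayLipschitzFunctions} provides the Lipschitz and continuity estimates — and that on $\mathfrak{A}$, respectively on the tangent spaces $\mathfrak{A}'(v)$, the combinations $Av+BF(Cv)$ and $[A+BF'(Cv)C]\xi$ fall back into $\mathbb{E}$ exactly because the delay equation, respectively its linearization, holds there, which is what legitimizes projecting by $\Pi$ and reducing to ODEs in $\mathbb{E}^{-}$.
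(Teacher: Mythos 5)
Your proposal is correct and takes essentially the same route as the paper: project the complete (resp.\ amenable) trajectories by $\Pi$ to obtain the ODE in $\mathbb{E}^{-}$, use the fact that on $\mathfrak{A}$ (resp.\ on $\mathfrak{A}'(v)$) the combination $Av+BF(Cv)$ (resp.\ $[A+BF'(Cv)C]\xi$) is the embedding of $\phi'$ (so the map factors through the bounded operator $\mathbb{F}_{1}\to\mathbb{E}$, $(\phi(0),\phi)\mapsto(\phi'(0),\phi')$), invoke Lemma~\ref{LEM: DelayLipschitzFunctions} for the Lipschitz/continuity bounds, and conclude the one-to-one correspondence from ODE uniqueness. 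Your only cosmetic deviation is re-deriving the Lipschitz bound for $g\circ\Phi$ via the delay equation on $[-2\tau,0]$ rather than reading it off directly from $\|\Phi(\zeta_{1})-\Phi(\zeta_{2})\|_{\mathbb{F}_{1}}\le C_{1}\|\zeta_{1}-\zeta_{2}\|_{\mathbb{E}}$, which is exactly what the paper does; both are sound.
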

\begin{proof}
	The main obstacle to study the vector field $f$ in \eqref{EQ: ProjectedNonlinearDelay} is due to fact that the $V$-orthogonal projector $\Pi$ has nothing to do with the operators $A$ and $B$ and, consequently, the differentiability properties of $f$ are not obvious in the chart given by $\Pi$. Due to this, it is convenient to use different charts on $\mathfrak{A}$ given by other projectors.
	
	By the hypothesis, there exists a spectral projector $\Pi^{a}$ for $A$ which is admissible. Note that such a projector can be defined everywhere in $\mathbb{H}$ (since $A$ is an operator in $\mathbb{H}$) and commutes with $A$. Thus, $\Pi^{a} A = A \Pi^{a}$ and $\Pi^{a} B$ are bounded linear maps and in the chart given by $\Pi^{a}$ the conclusions of Lemma \ref{LEM: ProjectedEquationsDelay} are obvious. Then the arguments before lemma show that the same holds in any admissible chart and, in particular, the one given by the $V$-orthogonal projector $\Pi$.
\end{proof}

The following theorem is a generalization of Corollary 2.2 from \cite{Smith1986HD}, where the case of ODEs in $\mathbb{R}^{n}$ with $j=n$ is considered. The main idea is to use the quadratic form $-V(\cdot)$ restricted to each tangent space $\mathfrak{A}'(v)$ as a Riemannian metric on $\mathfrak{A}$. Condition \eqref{EQ: SqueezingLinearCocycle} gives a lower bound for the singular values of $L(t;v) \colon \mathfrak{A}'(v) \to \mathfrak{A}'(\varphi^{t}(v))$ in this metric. This lower bound can be used to estimate a product of the first $l < j$ singular values through the product of all $j$ singular values, which, in turn, can be estimated through the trace that is independent of metric changes.
\begin{theorem}
	\label{TH: FrequencyDimEstimateDelay}
	Under the conditions of Lemma \ref{LEM: ProjectedEquationsDelay} suppose that $\mathcal{K}$ is an invariant compact and for some $d \in [0,j]$ we have
	\begin{equation}
	\label{EQ: DimFreqEstimateMainCond}
	(j-d)\nu + \operatorname{Tr}(A_{L}(v)) < 0 \text{ for all } v \in \mathcal{K},
	\end{equation}
	where $A_{L}(\cdot)$ is defined in \eqref{EQ: ProjectedLinearDelay}. Then $\operatorname{dim}_{\operatorname{F}}\mathcal{K} < d$.
\end{theorem}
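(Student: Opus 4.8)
The plan is to push the problem down to the $j$-dimensional inertial form on $\mathbb{E}^{-}$ and run there a volume-contraction argument in the spirit of Constantin--Foias--Temam and Chepyzhov--Ilyin, with singular values measured in the Riemannian metric generated by $-V$. First I would observe that a compact invariant set consists of bounded complete trajectories, which are amenable, so by the construction of $\mathfrak{A}$ we have $\mathcal{K}\subset\mathfrak{A}$; by Lemma~\ref{LEM: ProjectedEquationsDelay} and Theorem~\ref{TH: SmoothnessManifoldDelay} the substitution $v=\Phi(\zeta)$, $\zeta=\Pi v$, conjugates $\varphi$ on $\mathfrak{A}$ with the $C^{1}$ flow $\zeta^{t}$ of the globally Lipschitz field $f$ on $\mathbb{E}^{-}$, whose linearization is $\dot{\eta}=A_{L}(\varphi^{t}(v))\eta$ with solution operator $S(t;\zeta_{0}):=D\zeta^{t}(\zeta_{0})=\Pi\,L(t;\Phi(\zeta_{0}))\,\widetilde{\Phi}(\Phi(\zeta_{0}))$. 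Since $\Pi|_{\mathfrak{A}}$ is bi-Lipschitz (its inverse $\Phi$ is Lipschitz by Lemma~\ref{LEM: DelayLipschitzFunctions}), $\operatorname{dim}_{F}\mathcal{K}=\operatorname{dim}_{F}(\Pi\mathcal{K})$ and $\Pi\mathcal{K}$ is a compact set invariant under $\zeta^{t}$, so it suffices to bound $\operatorname{dim}_{F}(\Pi\mathcal{K})$.

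On $\mathbb{E}^{-}$ I would introduce the continuous family of inner products $g_{\zeta}(\eta):=-V(\widetilde{\Phi}(\Phi(\zeta))\eta)$, positive definite because $\widetilde{\Phi}(\Phi(\zeta))$ is injective and $-V$ is positive definite on $\mathfrak{A}'(\Phi(\zeta))$, and uniformly equivalent to the $\mathbb{H}$-inner product on the compact set $\Pi\mathcal{K}$; let $\omega^{g}_{d}$ be the corresponding singular value function. Using the identity $\widetilde{\Phi}(\varphi^{t}(v))S(t;\zeta_{0})\eta=L(t;v)\widetilde{\Phi}(v)\eta$ from Lemma~\ref{LEM: ProjectedEquationsDelay} together with \eqref{EQ: SqueezingLinearCocycle}, for every $t\ge\tau$ and $\eta\in\mathbb{E}^{-}$ one gets
\[
g_{\zeta^{t}\zeta_{0}}(S(t;\zeta_{0})\eta)=-V\!\left(L(t;v)\widetilde{\Phi}(v)\eta\right)\ge e^{-2\nu t}\,\bigl(-V(\widetilde{\Phi}(v)\eta)\bigr)=e^{-2\nu t}g_{\zeta_{0}}(\eta),
\]
so the smallest $g$-singular value of $S(t;\zeta_{0})$ is at least $e^{-\nu t}$. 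Denoting the $g$-singular values by $\alpha_{1}\ge\dots\ge\alpha_{j}$ and writing $d=l+s$, this lower bound gives $\omega^{g}_{d}(S(t;\zeta_{0}))=\alpha_{1}\cdots\alpha_{l}\,\alpha_{l+1}^{s}\le e^{\nu t(j-d)}\,\alpha_{1}\cdots\alpha_{j}$, while the product $\alpha_{1}\cdots\alpha_{j}$ is the $g$-Jacobian $\rho(\zeta^{t}\zeta_{0})\rho(\zeta_{0})^{-1}|\det S(t;\zeta_{0})|$, with $\rho$ the $g$-volume density (bounded on $\Pi\mathcal{K}$) and, by the classical Liouville formula for $\dot{\eta}=A_{L}(\varphi^{t}(v))\eta$ and the metric-independence of the trace, $|\det S(t;\zeta_{0})|=\exp\!\bigl(\int_{0}^{t}\operatorname{Tr}(A_{L}(\varphi^{s}(v)))\,ds\bigr)$. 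Since $\varphi^{s}(v)\in\mathcal{K}$, hypothesis \eqref{EQ: DimFreqEstimateMainCond} together with the compactness of $\mathcal{K}$ and continuity of $v\mapsto\operatorname{Tr}(A_{L}(v))$ (Lemma~\ref{LEM: ProjectedEquationsDelay}) yield constants $C_{0}>0$ and $c:=-\max_{v\in\mathcal{K}}\bigl((j-d)\nu+\operatorname{Tr}(A_{L}(v))\bigr)>0$ with $\omega^{g}_{d}(S(t;\zeta_{0}))\le C_{0}e^{-ct}$ for all $\zeta_{0}\in\Pi\mathcal{K}$ and $t\ge\tau$.

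It remains to convert back to the $\mathbb{H}$-singular value function: by the uniform equivalence of $g$ and the $\mathbb{H}$-metric on $\Pi\mathcal{K}$ with some distortion $\kappa$, $\omega_{d}(S(t;\zeta_{0}))\le\kappa^{d}\omega^{g}_{d}(S(t;\zeta_{0}))\le\kappa^{d}C_{0}e^{-ct}$, which is $<1$ uniformly in $\zeta_{0}\in\Pi\mathcal{K}$ once $t=t_{*}\ge\tau$ is large enough. Applying the (finite-dimensional) volume-contraction estimate of \cite{ChepyzhovIlyin2004}, as used for Theorem~\ref{TH: ChepyzhovIlyinCor}, to the $C^{1}$ map $\zeta^{t_{*}}\colon\Pi\mathcal{K}\to\Pi\mathcal{K}$ gives $\operatorname{dim}_{F}(\Pi\mathcal{K})\le d$, hence $\operatorname{dim}_{F}\mathcal{K}\le d$. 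Finally, \eqref{EQ: DimFreqEstimateMainCond} for $d$ also holds for every $d'$ with $d-c/\nu<d'<d$, since passing from $d$ to $d'$ increases its left-hand side only by $(d-d')\nu<c$; rerunning the argument with such a $d'$ yields $\operatorname{dim}_{F}\mathcal{K}\le d'<d$.

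The step I expect to be the main obstacle is the clean set-up of the Riemannian metric $g$: proving that $-V$ is positive definite on each tangent space $\mathfrak{A}'(v)$ with constants uniform over $\mathcal{K}$, so that the $g$-singular values are well defined and the comparison with the $\mathbb{H}$-norm has a uniform distortion, and checking that the Chepyzhov--Ilyin volume-contraction machinery is insensitive to measuring singular values in $g$ rather than in the ambient inner product --- equivalently, that submultiplicativity of $\omega_{d}$ under composition survives when the intermediate metrics vary along the trajectory. Once these points are settled, the singular-value trade-off, the Liouville identity, and the strictness slack are routine.
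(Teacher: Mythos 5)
Your proposal is correct and follows essentially the same route as the paper: reduce to the inertial form on $\mathbb{E}^{-}$, use the Riemannian metric built from $-V\circ\widetilde{\Phi}$ to lower-bound all singular values by $e^{-\nu t}$ via \eqref{EQ: SqueezingLinearCocycle}, trade the first $k$ singular values against the full product controlled by the Liouville formula, and close with the Chepyzhov--Ilyin volume-contraction bound and compactness. The only cosmetic differences are that you track the metric distortion through the $g$-volume density ratio whereas the paper introduces a uniform constant $C_{\mathcal{K}}$ comparing $\sigma_k$ and $\sigma'_k$, and you make explicit the small-slack argument in $d$ needed for the strict inequality, which the paper leaves terse.
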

\begin{proof}
	It is sufficient to estimate the fractal dimension of $\Pi \mathcal{K}$, which is an invariant compact w.~r.~t. the flow given by \eqref{EQ: ProjectedNonlinearDelay} in $\mathbb{E}^{-}$. From Theorem \ref{TH: SmoothnessManifoldDelay} and Lemma \ref{LEM: ProjectedEquationsDelay} it follows that the vector field $f \colon \mathbb{E}^{-} \to \mathbb{E}^{-}$ from \eqref{EQ: ProjectedNonlinearDelay} is $C^{1}$-smooth and its derivative at $\zeta \in \mathbb{E}^{-}$ is given by $A_{L}(\Phi(\zeta))$. For any $t \geq 0$ and $\zeta_{0} \in \mathbb{E}^{-}$ put $\widetilde{L}(t;\zeta_{0}) := \Pi L(t;\Phi(\zeta_{0}))\Phi'(\zeta_{0})$. Clearly, $\widetilde{L}(t;\zeta_{0}) \colon \mathbb{E}^{-} \to \mathbb{E}^{-}$ is the differential at $\zeta_{0}$ of the map $\mathbb{E}^{-} \ni \zeta_{0} \mapsto \zeta(t;\zeta_{0}) \in \mathbb{E}^{-}$. Let us endow the space $\mathbb{E}^{-}$ with any scalar product. Let $\sigma_{1}(t;\zeta_{0}) \geq \ldots \geq \sigma_{j}(t;\zeta_{0})$ denote the singular values of $\widetilde{L}(t;\zeta_{0})$ w.~r.~t. this scalar product. From the Liouville trace formula for \eqref{EQ: ProjectedLinearDelay}, we have
	\begin{equation}
	\label{EQ: FreqDimEstTraceFormula}
	\sigma_{1}(t;\zeta_{0}) \cdot \ldots \cdot \sigma_{j}(t;\zeta_{0}) \leq \operatorname{exp}\left(\int_{0}^{t} \operatorname{Tr}A_{L}(\varphi^{s}(\Phi(\zeta_{0})))ds \right).
	\end{equation}
	Now for a fixed $t \geq 0$ and $\zeta_{0} \in \mathbb{E}^{-}$ let us consider the scalar products in $\mathbb{E}^{-}$ defined trough the quadratic forms $( \eta, \eta )_{1} := -V(\Phi'(\zeta_{0})\eta)$ and $(\eta,\eta)_{2}:= - V( \Phi'(\zeta(t;\zeta_{0}))\eta )$ for all $\eta \in \mathbb{E}^{-}$. Let $\mathbb{E}_{1}$ and $\mathbb{E}_{2}$ be the space $\mathbb{E}^{-}$ endowed with scalar products $(\cdot,\cdot)_{1}$ and $(\cdot,\cdot)_{2}$ respectively. Let $\sigma'_{1}(t;\zeta_{0}) \geq \ldots \geq \sigma'_{j}(t;\zeta_{0})$ denote the singular values of $\widetilde{L}(t;\zeta_{0}) \colon \mathbb{E}_{1} \to \mathbb{E}_{2}$. By the Courant–Fischer–Weyl min-max principle, we have for $k=1,\ldots,j$ that
	\begin{equation}
	\label{EQ: FreqEstimateDelayMinMax}
	\left(\sigma'_{k}(t;\zeta_{0})\right)^{2} = \sup_{ \mathbb{W}_{k} \subset \mathbb{E}^{-} } \inf_{ \eta \in \mathbb{W}_{k}, \eta \not=0} \frac{(\widetilde{L}(t;\zeta_{0})\eta, \widetilde{L}(t;\zeta_{0})\eta)_{2}}{(\eta,\eta)_{1}},
	\end{equation}
	where the supremum is taken over all $k$-dimensional subspaces $\mathbb{W}_{k}$ of $\mathbb{E}^{-}$. From \eqref{EQ: SqueezingLinearCocycle} with $\xi:=\Phi'(\zeta_{0})\eta$, $r:=t$ and $v:=\Phi(\zeta_{0})$ we have
	\begin{equation}
	\frac{(\widetilde{L}(t;\zeta_{0})\eta, \widetilde{L}(t;\zeta_{0})\eta)_{2}}{(\eta,\eta)_{1}} \geq e^{-2\nu t}.
	\end{equation}
	From this and \eqref{EQ: FreqEstimateDelayMinMax} we have $\sigma'_{k}(t;\zeta_{0}) \geq e^{- \nu t}$. Moreover, from \eqref{EQ: FreqEstimateDelayMinMax} and the min-max principle for $\sigma_{k}$ it is also clear that there exists a constant $C=C(t;\zeta_{0}) > 0$ such that $\sigma_{k}(t;\zeta_{0}) \geq C(t;\zeta_{0}) \sigma'(t;\zeta_{0})$ for all $t \geq 0$ and $\zeta_{0} \in \mathbb{E}^{-}$. Moreover, since $\mathcal{K}$ is compact and $\Phi'(\zeta_{0})$ depend continuously on $\zeta_{0}$, there exists a constant $C_{\mathcal{K}}>0$ such that $\sigma_{k}(t;\zeta_{0}) \geq C_{\mathcal{K}} \sigma'_{k}(t;\zeta_{0}) \geq C_{\mathcal{K}} e^{-\nu t}$ for all $t \geq 0$ and $\zeta_{0} \in \Pi\mathcal{K}$. From this and \eqref{EQ: FreqDimEstTraceFormula} for any $k=1,\ldots,j$ we have
	\begin{equation}
	\label{EQ: DimFreqEstIntegerFormula}
	\begin{split}
	\sigma_{1}(t;\zeta_{0}) \cdot \ldots \sigma_{k}(t;\zeta_{0}) = \frac{\sigma_{1}(t;\zeta_{0}) \cdot \ldots \cdot \sigma_{j}(t;\zeta_{0})}{\sigma_{k+1}(t;\zeta_{0}) \cdot \ldots \cdot \sigma_{j}(t;\zeta_{0})} \leq \\ \leq C^{-(j-k)}_{K} \operatorname{exp}\left( \int_{0}^{t} \left[(j-k)\nu + \operatorname{Tr}(A_{L}(\varphi^{s}(\Phi(\zeta_{0})))) \right]ds \right).
	\end{split}
	\end{equation}
	Now let $d = k + \delta$, where $k$ is a nonnegative integer and $\delta \in (0,1]$. Since
	\begin{equation}
	\begin{split}
	\sigma_{1}(t;\zeta_{0}) \cdot \ldots \cdot \sigma_{k}(t;\zeta_{0}) \cdot \sigma_{k+1}^{\delta} =\\= \left(\sigma_{1}(t;\zeta_{0}) \cdot \ldots \cdot \sigma_{k}(t;\zeta_{0}) \right)^{1-\delta} \cdot \left(\sigma_{1}(t;\zeta_{0}) \cdot \ldots \cdot \sigma_{k+1}(t;\zeta_{0}) \right)^{\delta},
	\end{split}	
	\end{equation} 
    from \eqref{EQ: DimFreqEstIntegerFormula} we have
    \begin{equation}
    \label{EQ: DimFreqEstEstimateForLimit}
    \begin{split}
    \sigma_{1}(t;\zeta_{0}) \cdot \ldots \cdot \sigma_{k}(t;\zeta_{0}) \cdot \left(\sigma_{k+1}(t;\zeta_{0})\right)^{\delta} \leq \\ \leq C^{-(j-d)}_{K} \operatorname{exp}\left( \int_{0}^{t} \left[(j-d)\nu + \operatorname{Tr}(A_{L}(\varphi^{s}(\Phi(\zeta_{0})))) \right]ds \right).
    \end{split}
    \end{equation}
    If \eqref{EQ: DimFreqEstimateMainCond} is satisfied then the right-hand side of \eqref{EQ: DimFreqEstEstimateForLimit} tends to $0$ as $t \to +\infty$. But this implies that $\operatorname{dim}_{\operatorname{F}}\Pi \mathcal{K} < d$ (see Theorem 2.1 from \cite{ChepyzhovIlyin2004}) and since $\mathcal{K} = \Phi(\Pi \mathcal{K})$ and $\Phi$ is a Lipschitz map, we get $\operatorname{dim}_{\operatorname{F}}\mathcal{K} < d$.
\end{proof}
\section{Example (continued)}
\label{SEC: ExampleDelayIM}

Let us consider again the Suarez-Schopf model \eqref{EQ: ElNinoSSmodel}. Here we will provide conditions for the existence of one-dimensional and two-dimensional inertial manifolds in the model. These manifolds are actually perturbations of the eigenspaces corresponding to the leading characteristic roots $\lambda_{1}$ and $\lambda_{2}$ at the zero equilibrium $\phi^{0}$. Conditions for persistence of such eigenspaces are given in terms of the so-called transfer function of the linear part (see \cite{Anikushin2020Freq}) and take the form of an inequality (called frequency inequality), which generalizes the Spectral Gap Condition \cite{Anikushin2020FreqParab}. In the case of \eqref{EQ: ElNinoSSmodel} written in the form \eqref{EQ: AbstractDelayHilberSpace} the transfer function $W(p)=C(A-pI)^{-1}B$ is given by
\begin{equation}
W(p) = \frac{-1}{1-\alpha e^{-\tau p} - p}.
\end{equation}
Here we assumed that $\widetilde{A}\phi = \phi(0) - \alpha \phi(-\tau)$, $\widetilde{B}=-1$, $\widetilde{C}\phi = \phi(0)$ and $F(y) = y^{3}$.

Let $\Lambda$ be the Lipschitz constant of $x^{3}$ on $[-R_{1},R_{1}]$, where $R_{1}$ is defined in Lemma \ref{LEM: SharperRegionSS}, that is $\Lambda = 3 R^{2}_{1}$.
\begin{theorem}
	\label{TH: SSmodelOneDimIMSharper}
	Suppose that $\alpha \tau < 1/2$ and for some $\nu \in (0,-\lambda_{2})$ the frequency inequality
	\begin{equation}
		\label{EQ: FrequencyIneqSSmodelOnedim}
		\operatorname{Re}W(-\nu + i\omega) + \Lambda^{-1} > 0 \text{ for all } \omega \in \mathbb{R}
	\end{equation}
    is satisfied. Then the global attractor $\mathcal{K}$ of \eqref{EQ: ElNinoSSmodel} is contained in a one-dimensional $C^{1}$-differentiable inertial manifold.
\end{theorem}
The proof is standard: we truncate\footnote{Note that it is important to preserve the condition $g'(x) \geq 0$.} the nonlinearity $x^{3}$ outside a small neighborhood of $\mathcal{K}$ and study the modified equation. From \eqref{EQ: FrequencyIneqSSmodelOnedim} we can apply\footnote{For this we consider the quadratic form (in terms of \cite{Anikushin2020Freq}) given by $\mathcal{F}(v,\xi) = \xi \cdot (\Lambda Cv - \xi)$, where $\xi \in \Xi := \mathbb{R}$, $v=(\phi(0),\phi)$ and $Cv = \phi(0)$.} the Frequency Theorem from \cite{Anikushin2020Freq} to get a proper operator $P$ as in Section \ref{SEC: InvariantManifoldsDelay}. Then we can construct an inertial manifold for the modified system, which contains $\mathcal{K}$ by definition (since $\nu >0$, see Appendix \eqref{APP: AdditionToReduction}) by methods of \cite{Anikushin2020Geom}. Restricting the manifold to the small neighborhood of $\mathcal{K}$ gives the desired manifold.

Thus, under the conditions of Theorem \ref{TH: SSmodelOneDimIMSharper}, $\mathcal{K}$ is just the union of equilibria and the unstable manifold of $\phi^{0}$.

Numerical simulations in \cite{Anikushin2021SS} show that the global attractor $\mathcal{K}$ for a wide range of parameters of interest lies in the ball of radius $1$. Let $\lambda_{3}$ and $\lambda_{4}$ be the first pair of complex-conjugate roots of \eqref{EQ: RootsSuarezDelay}.
\begin{theorem}
	\label{TH: SSmodelTwoDimIMSharper}
	Suppose for some $\nu \in (-\lambda_{2},-\operatorname{Re}\lambda_{3})$ the frequency inequality
	\begin{equation}
		\label{EQ: FrequencyIneqSSmodelTwodim}
		\operatorname{Re}W(-\nu + i\omega) + 1/3 > 0 \text{ for all } \omega \in \mathbb{R}
	\end{equation}
	is satisfied. Then any invariant set of \eqref{EQ: ElNinoSSmodel}, lying in the ball of radius $1$, is contained in a two-dimensional $C^{1}$-differentiable inertial manifold.
\end{theorem}

Fig. \ref{FIG: InertialManifoldsSSmodelSharper} shows a region where the conditions of Theorem \ref{TH: SSmodelOneDimIMSharper} or Theorem \ref{TH: SSmodelTwoDimIMSharper} are satisfied. Note that a part of the blue region can be obtained also using the rough estimate $\sqrt{1+\alpha}$ from Lemma \ref{LEM: RegionEstimateSS} to bound the global attractor $\mathcal{K}$. This allows to rigorously justify the existence of two-dimensional inertial manifolds containing $\mathcal{K}$. However, this part does not contain interesting parameters from \cite{Anikushin2021SS}, for which periodic orbits can be observed. We expect that the relative simplicity of the Suarez-Schopf model may allow to construct sharper regions and, consequently, rigorously justify the existence of inertial manifolds for the interesting parameters.

\begin{figure}
	\centering
	\includegraphics[width=1.\linewidth]{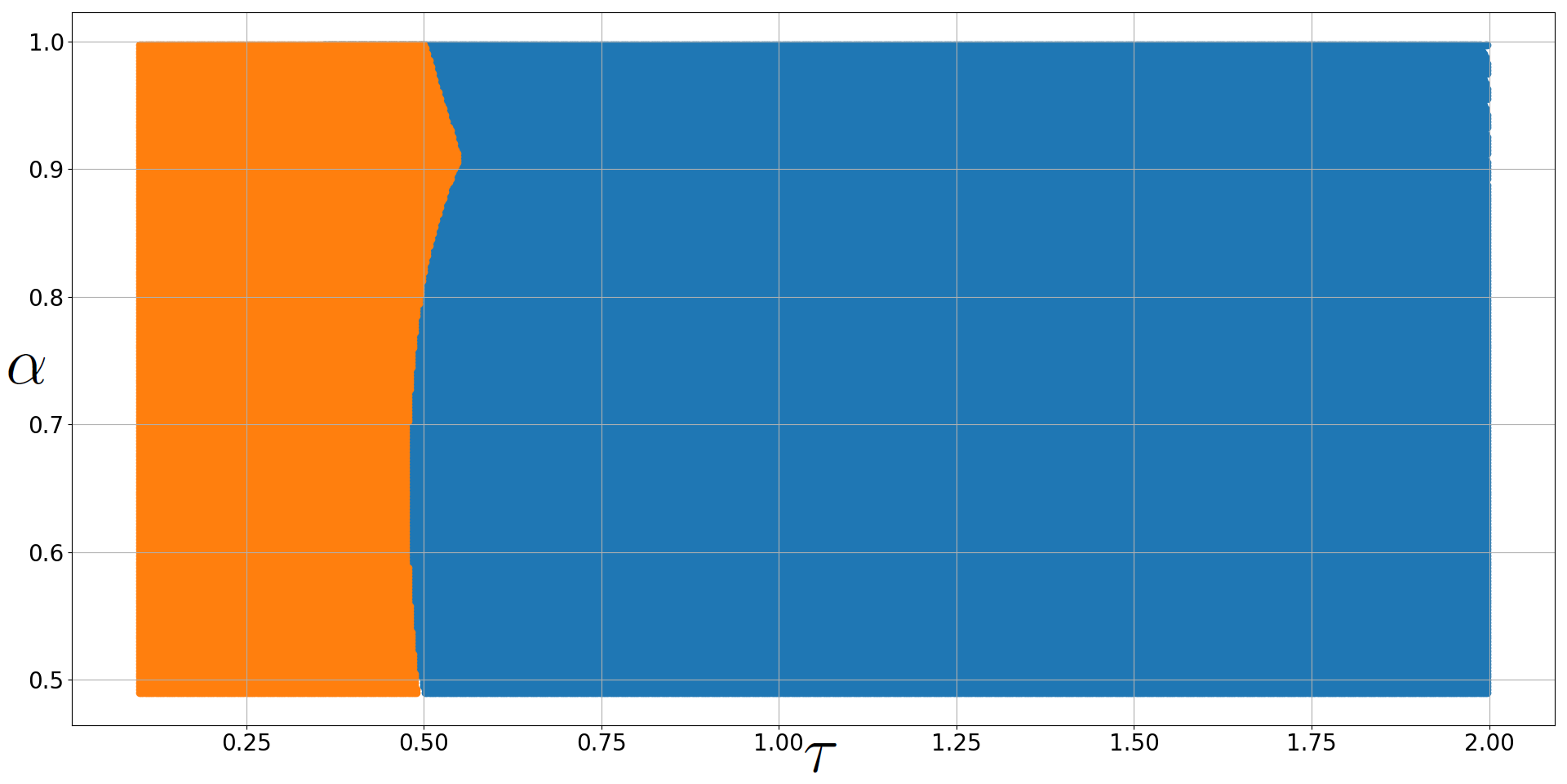}	
	\caption{A numerically obtained region in the space of parameters $(\tau,\alpha)$ of system \eqref{EQ: ElNinoSSmodel} for which the conditions of Theorem \ref{TH: SSmodelOneDimIMSharper} (orange) or Theorem \ref{TH: SSmodelTwoDimIMSharper} (blue) are satisfied.}
	\label{FIG: InertialManifoldsSSmodelSharper}
\end{figure}
\section*{Funding}
The reported study was funded by the Russian Science Foundation (Project 22-11-00172).
\appendix
\section{An addition to the reduction principle}
\label{APP: AdditionToReduction}
Let $\mathcal{Q}$ be a metric space and $\vartheta^{t} \colon \mathcal{Q} \to \mathcal{Q}$, where $t \in \mathbb{R}$, be a flow on $\mathcal{Q}$. Let $\mathbb{E}$ be a real Banach space. We say that the family of maps $\psi^{t}(q,\cdot) \colon \mathbb{E} \to \mathbb{E}$, where $q \in \mathcal{Q}$ and $t \geq 0$, is a \textit{cocycle} in $\mathbb{E}$ over the \textit{driving system} $(\mathcal{Q},\vartheta)$ if
\begin{enumerate}
	\item[1.] $\psi^{0}(q,v) = v$ and $\psi^{t+s}(q,v) = \psi^{t}(\vartheta^{s}(q),\psi^{s}(q,v))$ for all $t, s \geq 0$, $v \in \mathbb{E}$ and $q \in \mathcal{Q}$.
	\item[2.] The map $(t,q,v) \mapsto \psi^{t}(q,v)$ is continuous as a map $\mathbb{R}_{+} \times \mathcal{Q} \times \mathbb{E} \to \mathbb{E}$.
\end{enumerate}

Now suppose that $\mathbb{E}$ is continuously embedded into some real normed space $\mathbb{H}$, which is assumed to be Hilbert for convenience We identify elements of $\mathbb{E}$ and $\mathbb{H}$ under such an embedding. Let $\langle v, f \rangle$ denote the dual pairing between $v \in \mathbb{E}$ and $f \in \mathbb{E}^{*}$.

Let $P \colon \mathbb{E} \to \mathbb{E}^{*}$ be a bounded linear operator, i.~e. $P \in \mathcal{L}(\mathbb{E};\mathbb{E}^{*})$. We say that $P$ is \textit{symmetric} if $\langle v_{1}, Pv_{2} \rangle = \langle v_{2}, Pv_{1} \rangle$ for all $v_{1},v_{2} \in \mathbb{E}$. For a subspace $\mathbb{L} \subset \mathbb{E}$ we say that $P$ is \textit{positive} (respectively, \textit{negative}) on $\mathbb{L}$  if $\langle v, Pv \rangle > 0$ (respectively, $\langle v, Pv \rangle < 0$) for all $v \in \mathbb{L}$ with $v \not= 0$.

\begin{description}
	\item[\textbf{(H1)}] There is $P \in \mathcal{L}(\mathbb{E};\mathbb{E}^{*})$, which is symmetric and such that $\mathbb{E}$ splits into the direct sum of some subspaces $\mathbb{E}^{+}$ and $\mathbb{E}^{-}$, i.~e. $\mathbb{E} = \mathbb{E}^{+} \oplus \mathbb{E}^{-}$, such that  $P$ is positive on $\mathbb{E}^{+}$ and negative on $\mathbb{E}^{-}$.
	\item[\textbf{(H2)}] For some integer $j \geq 0$ we have $\dim \mathbb{E}^{-} = j$.
	\item[\textbf{(H3)}] For $V(v):=\langle v, Pv \rangle$ and some numbers $\delta>0$, $\nu>0$, $\tau_{V} \geq 0$ we have
	\begin{multline}
	\label{EQ: SqueezingProperty}
	e^{2\nu r}V(\psi^{r}(q,v_{1})-\psi^{r}(q,v_{2}))-e^{2\nu l} V(\psi^{l}(q,v_{1})-\psi^{l}(q,v_{2})) \leq \\ \leq -\delta\int_{l}^{r} e^{2\nu s} |\psi^{s}(q,v_{1})-\psi^{s}(q,v_{2})|^{2}_{\mathbb{H}}ds
	\end{multline}
	satisfied for every $v_{1},v_{2} \in \mathbb{E}$, $q \in \mathcal{Q}$ and $0 \leq l \leq r$ such that $r - l \geq \tau_{V}$.
\end{description}

Note that due to the cocycle property it is sufficient to require that \textbf{(H3)} holds for $l=0$ and any $r \geq \tau_{V}$. A more general case when $P$ and $\nu$ depends on $q \in \mathcal{Q}$ is considered in \cite{Anikushin2020Geom}.

\begin{remark}
	\label{REM: VorthProjector}
	Let the assumptions \textbf{(H1)} and \textbf{(H2)} be satisfied. Since $\mathbb{E}^{-}$ is finite-dimensional and $V(\cdot)$ is of constant sign on it, one can find the $V$-orthogonal complement of $\mathbb{E}^{-}$ in $\mathbb{E}$, which is defined as follows. The symmetric bilinear form $\langle v_{1}, -Pv_{2} \rangle$ defines an inner product in $\mathbb{E}^{-}$. Every $v \in \mathbb{E}$ gives rise to a continuous linear functional on $\mathbb{E}^{-}$ as $\langle \cdot, Pv \rangle$. By the Riesz representation theorem, there exists a unique element $\Pi v \in \mathbb{E}^{-}$ such that $\langle w, P v \rangle = \langle w,  P\Pi v \rangle$ for all $w \in \mathbb{E}^{-}$. Clearly, $\Pi \in \mathcal{L}(\mathbb{E};\mathbb{E})$. Put $\mathbb{E}^{\bot,V} := \operatorname{Ker}(I-\Pi)$. It is easy to verify that $\mathbb{E} = \mathbb{E}^{\bot,V} \oplus \mathbb{E}^{-}$ and $P$ is positive on $\mathbb{E}^{\bot,V}$. Thus, we can always assume that the subspaces $\mathbb{E}^{+}$ and $\mathbb{E}^{-}$ from $\textbf{(H1)}$ are $V$-orthogonal in the sense that $V(v)=V(v^{+})+V(v^{-})$, where $v=v^{+}+v^{-}$ is the unique decomposition with $v^{+} \in \mathbb{E}^{+}$ and $v^{-} \in \mathbb{E}^{-}$. We say that $\Pi$ is the $V$-\textit{orthogonal projector} onto $\mathbb{E}^{-}$. For $\mathbb{E} = \mathbb{H}$ this construction of $V$-orthogonal projectors also allows to avoid the use of the continuous functional calculus for bounded self-adjoint operators considered in \cite{Anikushin2020Red}.
\end{remark}

Let $v(\cdot) \colon \mathbb{R} \to \mathbb{E}$ be a continuous function. We say that $v(\cdot)$ is a \textit{complete trajectory} of the cocycle if there exists $q \in \mathcal{Q}$ such that $v(t+s) = \psi^{t}(\vartheta^{s}(q),v(s))$ for all $t \geq 0$ and $s \in \mathbb{R}$. In this case we say that $v(\cdot)$ is passing through $v(0)$ at $q$ (or simple, $v(\cdot)$ is a complete trajectory at $q$). Under \textbf{(H3)} a complete trajectory $v(\cdot)$ is called \textit{amenable} if
\begin{equation}
\int_{-\infty}^{0}e^{2\nu s} |v(s)|^{2}_{\mathbb{H}} ds < +\infty.
\end{equation}

Let us consider the following assumption.
\begin{description}
	\item[\textbf{(S)}] There is a number $t_{S} \geq 0$ and there is a constant $C_{S}>0$ such that
	\begin{equation}
	\|\psi^{t_{emb}}(q,v_{1})-\psi^{t_{emb}}(q,v_{2})\|_{\mathbb{E}} \leq C_{S} |v_{1}-v_{2}|_{\mathbb{H}} \text{ for all } q \in \mathcal{Q} \text{ and } v_{1},v_{2} \in \mathbb{E}.
	\end{equation}
\end{description}
Note that in the case $\mathbb{E} = \mathbb{H}$ considered in \cite{Anikushin2020Red} we have \textbf{(S)} automatically satisfied with $t_{S} = 0$ and $C_{S} = 1$.

In the construction of invariant manifolds by the methods of \cite{Anikushin2020Red} a compactness assumption is required as follows.
\begin{description}
	\item[\textbf{(COM)}] There exists $t_{com} > 0$ such that the map $\psi^{t_{com}}(q,\cdot) \colon \mathbb{E} \to \mathbb{E}$ is compact for all $q \in \mathcal{Q}$.
\end{description}

Let $\mathfrak{A}(q)$ be the set of all amenable trajectories at $q$. A generalization of Theorem 1 from \cite{Anikushin2020Red} can be given as follows.
\begin{theorem}
	\label{TH: ReductionTheoremGener}
	Let the cocycle $\psi$ in $\mathbb{E}$ satisfy \textbf{(H1)},\textbf{(H2)}, \textbf{(H3)}, \textbf{(S)} and \textbf{(COM)}. Let $\Pi$ be the $V$-orthogonal projector onto $\mathbb{E}^{-}$ (see Remark \ref{REM: VorthProjector}). Then for any $q \in \mathcal{Q}$ either $\mathfrak{A}(q)$ is empty or the map $\Pi_{q} :=\restr{\Pi}{\mathfrak{A}(q)} \colon \mathfrak{A}(q) \to \mathbb{E}^{-}$ is a homeomorphism.
\end{theorem}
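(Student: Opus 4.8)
The plan is to run the argument of Theorem~1 in \cite{Anikushin2020Red}, whose setting is the special case $\mathbb{E}=\mathbb{H}$, now inserting the embedding estimate \textbf{(S)} at every place where the norms of $\mathbb{E}$ and $\mathbb{H}$ were previously interchanged for free; the compactness \textbf{(COM)} enters only in the surjectivity part. Write $V(v)=\langle v,Pv\rangle$. By Remark~\ref{REM: VorthProjector} we may use the $V$-orthogonal splitting $\mathbb{E}=\ker\Pi\oplus\mathbb{E}^{-}$ on which $V$ is positive definite on $\ker\Pi$ and negative definite on $\mathbb{E}^{-}$, together with $|V(v)|\le\|P\|\,\|v\|_{\mathbb{E}}^{2}$. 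It is convenient to record the consequence of \textbf{(H3)} and the cocycle property that for any two complete trajectories $v_{1}(\cdot),v_{2}(\cdot)$ of $\psi$ at $q$ and all $l\le r$ with $r-l\ge\tau_{V}$,
\[
V\bigl(v_{1}(r)-v_{2}(r)\bigr)e^{2\nu r}+\delta\int_{l}^{r}e^{2\nu s}\bigl|v_{1}(s)-v_{2}(s)\bigr|_{\mathbb{H}}^{2}\,ds\ \le\ V\bigl(v_{1}(l)-v_{2}(l)\bigr)e^{2\nu l}.
\]

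\emph{Injectivity of $\Pi_{q}$.} Let $v_{1}(\cdot),v_{2}(\cdot)$ be amenable at $q$ with $\Pi v_{1}(0)=\Pi v_{2}(0)$, so $v_{1}(0)-v_{2}(0)\in\ker\Pi$ and $V(v_{1}(0)-v_{2}(0))\ge 0$. Apply the displayed inequality on $[l,0]$; its right-hand side equals $e^{2\nu l}V(v_{1}(l)-v_{2}(l))$, which by \textbf{(S)} (with base point $\vartheta^{l-t_{S}}(q)$) is bounded by a constant times $e^{2\nu(l-t_{S})}|v_{1}(l-t_{S})-v_{2}(l-t_{S})|_{\mathbb{H}}^{2}$, and by amenability of $v_{1},v_{2}$ the latter tends to $0$ along a suitable sequence $l_{k}\to-\infty$. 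Since the left-hand side is a sum of two nonnegative terms, letting $l=l_{k}\to-\infty$ forces $V(v_{1}(0)-v_{2}(0))=0$ and $\int_{-\infty}^{0}e^{2\nu s}|v_{1}(s)-v_{2}(s)|_{\mathbb{H}}^{2}ds=0$; hence $v_{1}\equiv v_{2}$.

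\emph{Surjectivity of $\Pi_{q}$.} Assume $\mathfrak{A}(q)\ne\emptyset$ and fix $v^{*}(\cdot)\in\mathfrak{A}(q)$; let $\zeta_{0}\in\mathbb{E}^{-}$. For $T\ge\tau_{V}$ put $g_{T}(w):=\Pi\psi^{T}(\vartheta^{-T}(q),w)$ for $w\in\mathbb{E}^{-}$. Using the displayed inequality and the negative definiteness of $V$ on $\mathbb{E}^{-}$ one checks that $g_{T}\colon\mathbb{E}^{-}\to\mathbb{E}^{-}$ is continuous, injective and proper, hence a homeomorphism by invariance of domain; let $w_{T}\in\mathbb{E}^{-}$ be the unique solution of $g_{T}(w_{T})=\zeta_{0}$ and set $v_{T}(t):=\psi^{t+T}(\vartheta^{-T}(q),w_{T})$ for $t\ge-T$, so $v_{T}(-T)\in\mathbb{E}^{-}$ and $\Pi v_{T}(0)=\zeta_{0}$. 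Now apply the displayed inequality to the pair $(v_{T},v^{*})$ on $[-T,r]$: since $v_{T}(-T)\in\mathbb{E}^{-}$, the $V$-orthogonal decomposition of $v_{T}(-T)-v^{*}(-T)$ gives $V(v_{T}(-T)-v^{*}(-T))\le\|P\|\,\|v^{*}(-T)\|_{\mathbb{E}}^{2}$ \emph{independently of} $\|w_{T}\|$, and by \textbf{(S)} and amenability of $v^{*}$ this is $\le e^{2\nu T}\varepsilon(T)$ with $\varepsilon(T)\to0$ along some $T_{k}\to\infty$. Hence, uniformly in $k$,
\[
e^{2\nu r}V\bigl(v_{T_{k}}(r)-v^{*}(r)\bigr)+\delta\int_{-T_{k}}^{r}e^{2\nu s}\bigl|v_{T_{k}}(s)-v^{*}(s)\bigr|_{\mathbb{H}}^{2}\,ds\ \le\ \varepsilon_{k}\ \to\ 0 .
\]
Evaluating at $r=0$ and using that $\Pi(v_{T_{k}}(0)-v^{*}(0))=\zeta_{0}-\Pi v^{*}(0)$ is fixed bounds $\|v_{T_{k}}(0)\|_{\mathbb{E}}$ uniformly in $k$; propagating forward with \textbf{(S)} yields a uniform bound for $\|v_{T_{k}}(s)\|_{\mathbb{E}}$ on compact $s$-intervals, and the displayed estimate controls the tails $\int_{-T_{k}}^{0}e^{2\nu s}|v_{T_{k}}(s)|_{\mathbb{H}}^{2}ds$. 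Then \textbf{(COM)} makes $\{v_{T_{k}}(s)\}_{k}$ precompact in $\mathbb{E}$ for each fixed $s$, and a diagonal extraction together with continuity of the cocycle produces a complete trajectory $v(\cdot)$ at $q$ with $v_{T_{k}}\to v$ uniformly on compacts; by lower semicontinuity of the tail integral it is amenable, and $\Pi v(0)=\lim_{k}\Pi v_{T_{k}}(0)=\zeta_{0}$.

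\emph{Bicontinuity and the main difficulty.} Continuity of $\Pi_{q}$ is clear. For $\Pi_{q}^{-1}$: given $\zeta^{(n)}\to\zeta$, the estimates above — which depend only on $\sup_{n}\|\zeta^{(n)}\|$ and on the fixed $v^{*}$ — show the family of corresponding amenable trajectories is precompact, and every limit point must, by injectivity, coincide with $\Pi_{q}^{-1}\zeta$; hence $\Pi_{q}^{-1}$ is continuous and $\Pi_{q}$ is a homeomorphism. I expect the surjectivity step to be the hard part: one must make the a priori estimates for the auxiliary trajectories $v_{T}$ genuinely uniform in $T$, which requires deriving the bound on $V(v_{T}(-T)-v^{*}(-T))$ purely from $v_{T}(-T)\in\mathbb{E}^{-}$ (so that it survives even though $\|w_{T}\|$ may grow with $T$), choosing the sequence $T_{k}$ via amenability of $v^{*}$ and \textbf{(S)}, and combining the precompactness from \textbf{(COM)} correctly with the cocycle relation to extract a complete, amenable limit passing through the prescribed point of $\mathbb{E}^{-}$.
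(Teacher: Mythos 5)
Your proposal follows the structure one would expect from adapting Theorem~1 of \cite{Anikushin2020Red} to the present Banach-space setting, which is precisely what the paper says one should do: injectivity from the $V$-monotonicity in \textbf{(H3)} together with non-negativity of $V$ on $\ker\Pi$; surjectivity via the finite-time maps $g_T$, invariance of domain, and the $T$-uniform a~priori bound based on the key observation that $V\bigl(v_T(-T)-v^*(-T)\bigr)$ is controlled only by the $\mathbb{E}^+$-component of $v^*(-T)$ because $v_T(-T)\in\mathbb{E}^-$. The injectivity argument is complete (including the correct insertion of \textbf{(S)} to pass from the $\mathbb{E}$-norm at time $l$ to the $\mathbb{H}$-norm at time $l-t_S$ before invoking amenability), and the $g_T$ machinery is sound: properness is checked by comparing against a fixed $w_0\in\mathbb{E}^-$ and using that $V$ is coercive on the finite-dimensional $\mathbb{E}^-$.

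There is, however, a genuine gap in the compactness extraction. After evaluating the display at $r=0$ you assert that the fixed value of $\Pi\bigl(v_{T_k}(0)-v^*(0)\bigr)$ ``bounds $\|v_{T_k}(0)\|_{\mathbb{E}}$ uniformly in $k$''. That would require $V$ to be coercive on $\mathbb{E}^+$, but \textbf{(H1)} only asserts strict positivity there, and $\mathbb{E}^+$ is infinite-dimensional, so a bound $V\bigl((v_{T_k}(0)-v^*(0))^+\bigr)\le C$ does not control the $\mathbb{E}$-norm of that component. What the display at $r=0$ does give (after moving the fixed negative $V^-$-term to the right) is the uniform bound on the weighted integral $\int_{-T_k}^{0}e^{2\nu s}\bigl|v_{T_k}(s)-v^*(s)\bigr|_{\mathbb{H}}^{2}\,ds$. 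From this one must select, by the mean value theorem, points $s_k$ inside fixed unit intervals with $|v_{T_k}(s_k)|_{\mathbb{H}}$ bounded, pass to $\mathbb{E}$-bounds at $s_k+t_S$ via \textbf{(S)}, and only then bring in \textbf{(COM)}. Your phrase ``propagating forward with \textbf{(S)} yields a uniform bound for $\|v_{T_k}(s)\|_{\mathbb{E}}$ on compact $s$-intervals'' hides exactly the delicate point: \textbf{(S)} controls a single fixed time step $t_S$, the selected $s_k$ move with $k$, and \textbf{(COM)} is stated pointwise in $q$, so transferring the moving $\mathbb{E}$-bound to a \emph{fixed} time where the base point $\vartheta^{\sigma}(q)$ can be frozen for the compact map requires an explicit additional argument that is not supplied. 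Everything else (the construction of the limit trajectory by a diagonal argument once pointwise precompactness is secured, amenability via Fatou, and the continuity of $\Pi_q^{-1}$ by the same compactness scheme) is consistent with the intended proof.
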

A proof can be given following the same arguments as in \cite{Anikushin2020Red} and using the above given remarks. A proof for the more general case, where $P$ and $\nu$ depends on $q \in \mathcal{Q}$, is presented in our adjacent work \cite{Anikushin2020Geom}. Note that we also considered a number $\tau_{V} \geq 0$ in \textbf{(H1)} (in \cite{Anikushin2020Red} there was no such a number), which may be convenient sometimes. Convergence theorems for periodic cocycles given in \cite{Anikushin2020Red} can be also generalized to this setting (see \cite{Anikushin2020Geom}). Along with the Frequency Theorem from \cite{Anikushin2020Freq}, which covers delay equations and allows to construct operators as in \textbf{(H3)}, this satisfactorily solves Problem 2 from \cite{Anikushin2020Red}.

Let the conditions of Theorem \ref{TH: ReductionTheoremGener} be satisfied and the sets $\mathfrak{A}(q)$ be not empty for all $q \in \mathcal{Q}$. Then we can consider the map $\Phi \colon \mathcal{Q} \times \mathbb{E}^{-} \to \mathfrak{A}(q)$ defined as $\Phi(q,\zeta) := \Phi_{q}(\zeta)$, where $\Phi_{q} \colon \mathbb{E}^{-} \to \mathfrak{A}(q)$ is the inverse to $\Pi_{q}=\restr{\Pi}{\mathfrak{A}(q)} \colon \mathfrak{A}(q) \to \mathbb{E}^{-}$. For every fixed $q$ the map $\Phi(q,\cdot)$ is a homeomorphism between $\mathbb{E}^{-}$ and $\mathfrak{A}(q)$ due to Theorem \ref{TH: ReductionTheoremGener}. In \cite{Anikushin2020Red} it was posed a problem (see Problem 1 therein), whether the map $\Phi \colon \mathcal{Q} \times \mathbb{E}^{-} \to \mathbb{E}$ is continuous. Here we present an answer, which extends the case of semiflows and periodic cocycles studied in \cite{Anikushin2020Red} and covers many interesting cases arising in practice (see Remark \ref{REM: ApplicationsofContTh} below).

\begin{description}
	\item[\textbf{(BA)}] For any $q \in \mathcal{Q}$ there is a bounded in the past complete trajectory $w^{*}_{q}(\cdot)$ at $q$ and there exists a constant $M_{b}>0$ such that $\sup_{t \leq 0}\|w^{*}_{q}(t)\|_{\mathbb{E}} \leq M_{b}$ for all $q \in \mathcal{Q}$.
\end{description}

We say that $\Phi \colon \mathcal{Q} \times \mathbb{E}^{-} \to \mathbb{E}$ is uniformly compact if $\Phi(\mathcal{C},\mathcal{B})$ is precompact in $\mathbb{E}$ for any precompact set $\mathcal{C} \subset \mathcal{Q}$ and bounded set $\mathcal{B} \subset \mathbb{E}^{-}$.
\begin{theorem}
	\label{TH: PhiContinuityGen}
	Under the hypotheses of Theorem \ref{TH: ReductionTheoremGener} suppose in addition that \textbf{(BA)} holds and the map $\Phi$ is uniformly compact. Then $\Phi \colon \mathcal{Q} \times \mathbb{E}^{-} \to \mathbb{E}$ is continuous.
\end{theorem}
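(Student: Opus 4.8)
The plan is to prove sequential continuity, which suffices since $\mathcal{Q}\times\mathbb{E}^{-}$ is a metric space. Note first that under \textbf{(BA)} every reference trajectory $w^{*}_{q}(\cdot)$ is itself amenable, because $\sup_{t\le0}\|w^{*}_{q}(t)\|_{\mathbb{E}}\le M_{b}$ and $\mathbb{E}\hookrightarrow\mathbb{H}$ continuously; in particular $\mathfrak{A}(q)\ne\varnothing$ for all $q$, so $\Phi$ is genuinely defined on $\mathcal{Q}\times\mathbb{E}^{-}$. Fix $(q_{k},\zeta_{k})\to(q_{0},\zeta_{0})$. Since $K:=\{q_{0}\}\cup\{q_{k}:k\ge1\}$ is compact in $\mathcal{Q}$ and $\{\zeta_{k}\}\cup\{\zeta_{0}\}$ is bounded in $\mathbb{E}^{-}$, uniform compactness of $\Phi$ makes $\{\Phi(q_{k},\zeta_{k})\}$ precompact in $\mathbb{E}$; hence it is enough to show that any subsequential limit of $\Phi(q_{k},\zeta_{k})$ equals $\Phi(q_{0},\zeta_{0})$. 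Passing to a subsequence, assume $v_{k}(0):=\Phi(q_{k},\zeta_{k})\to w$ in $\mathbb{E}$, and let $v_{k}(\cdot)$ be the amenable trajectory at $q_{k}$ through $v_{k}(0)$ (unique by \textbf{(H3)}).

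The core step is the uniform a priori estimate
\[
\delta\int_{-\infty}^{0}e^{2\nu s}\,\bigl|v_{k}(s)-w^{*}_{q_{k}}(s)\bigr|^{2}_{\mathbb{H}}\,ds\ \le\ \bigl|V\bigl(v_{k}(0)-w^{*}_{q_{k}}(0)\bigr)\bigr|\ \le\ \|P\|\bigl(\|v_{k}(0)\|_{\mathbb{E}}+M_{b}\bigr)^{2},
\]
valid with a constant independent of $k$. I would get it by applying \textbf{(H3)} to the pair $v_{k}(\cdot),w^{*}_{q_{k}}(\cdot)$ on intervals $[l,0]$ with $-l\ge\tau_{V}$ and letting $l\to-\infty$: the boundary term $e^{2\nu l}V(v_{k}(l)-w^{*}_{q_{k}}(l))$ vanishes along a suitable sequence $l_{m}\to-\infty$, since $|V(\xi)|\le\|P\|\,\|\xi\|^{2}_{\mathbb{E}}$, \textbf{(S)} yields $\|v_{k}(l)-w^{*}_{q_{k}}(l)\|_{\mathbb{E}}\le C_{S}|v_{k}(l-t_{S})-w^{*}_{q_{k}}(l-t_{S})|_{\mathbb{H}}$, and the amenability of $v_{k}(\cdot)$ and $w^{*}_{q_{k}}(\cdot)$ forces $\liminf_{s\to-\infty}e^{2\nu s}|v_{k}(s)-w^{*}_{q_{k}}(s)|^{2}_{\mathbb{H}}=0$. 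The right-hand side above is uniformly bounded because $\{v_{k}(0)\}$ is precompact.

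Using this estimate I would then, for each $n\in\mathbb{N}$, choose $s_{k,n}\in[-n-1,-n]$ with $\|v_{k}(s_{k,n})\|_{\mathbb{E}}\le N_{n}$, $N_{n}$ independent of $k$: a mean value argument on $[-n-1-t_{S},-n-t_{S}]$ produces $\tilde s_{k,n}$ there at which $e^{2\nu\tilde s_{k,n}}|v_{k}(\tilde s_{k,n})-w^{*}_{q_{k}}(\tilde s_{k,n})|^{2}_{\mathbb{H}}$ is bounded by a constant depending only on $n$, and then \textbf{(S)} together with $\|w^{*}_{q_{k}}\|_{\mathbb{E}}\le M_{b}$ bounds $\|v_{k}(\tilde s_{k,n}+t_{S})\|_{\mathbb{E}}$; set $s_{k,n}:=\tilde s_{k,n}+t_{S}$. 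Writing $v_{k}(s_{k,n})=\Phi(\vartheta^{s_{k,n}}(q_{k}),\Pi v_{k}(s_{k,n}))$, the base points lie in the compact set $\mathcal{C}_{n}:=\{\vartheta^{\sigma}(q):\sigma\in[-n-1,-n],\ q\in K\}$ and the second arguments in a bounded subset of $\mathbb{E}^{-}$, so uniform compactness of $\Phi$ makes $\{v_{k}(s_{k,n})\}_{k}$ precompact in $\mathbb{E}$. A diagonal extraction then yields a subsequence along which $s_{k,n}\to\sigma_{n}\in[-n-1,-n]$ and $v_{k}(s_{k,n})\to w_{n}$ for every $n$; for fixed $t$, picking $n$ with $\sigma_{n}<t$, joint continuity of $\psi$ and of the flow $\vartheta$ gives $v_{k}(t)=\psi^{t-s_{k,n}}(\vartheta^{s_{k,n}}(q_{k}),v_{k}(s_{k,n}))\to\psi^{t-\sigma_{n}}(\vartheta^{\sigma_{n}}(q_{0}),w_{n})=:v(t)$, the value being independent of the chosen $n$. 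Thus $v(\cdot)$ is a complete trajectory at $q_{0}$ with $v(0)=w$ and $\Pi v(0)=\lim_{k}\zeta_{k}=\zeta_{0}$, and Fatou's lemma applied to the a priori estimate shows $\int_{-\infty}^{0}e^{2\nu s}|v(s)|^{2}_{\mathbb{H}}\,ds<\infty$, i.e. $v(\cdot)$ is amenable. Hence $w\in\mathfrak{A}(q_{0})$, and Theorem~\ref{TH: ReductionTheoremGener} gives $w=\Phi_{q_{0}}(\Pi w)=\Phi(q_{0},\zeta_{0})$, which is what had to be shown.

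I expect the main obstacle to be precisely the uniform a priori estimate and its conversion, via \textbf{(S)}, into $\mathbb{E}$-bounds at the times $s_{k,n}$: this is the point where \textbf{(BA)} is used (it furnishes the uniformly bounded trajectory one subtracts in \textbf{(H3)}), where \textbf{(S)} is indispensable (it upgrades $\mathbb{H}$-control to $\mathbb{E}$-control at shifted times), and where uniform compactness of $\Phi$ is used twice (for precompactness at $t=0$ and at the times $s_{k,n}$). After that, the backward reconstruction of $v(\cdot)$ and the final appeal to the homeomorphism property of Theorem~\ref{TH: ReductionTheoremGener} are routine.
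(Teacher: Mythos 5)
Your proof is correct and follows essentially the same route as the paper's: subtract the uniformly bounded reference trajectory from \textbf{(BA)}, let \textbf{(H3)} give a uniform weighted $L^{2}_{\mathbb{H}}$ bound on $v_{k}(\cdot)-w^{*}_{q_{k}}(\cdot)$, combine a mean-value argument with \textbf{(S)} to obtain uniformly bounded $\mathbb{E}$-norms at a ladder of times, apply uniform compactness of $\Phi$ at those times, diagonalize and push forward by the cocycle to build a complete trajectory, use Fatou to get amenability, and invoke the homeomorphism in Theorem~\ref{TH: ReductionTheoremGener} to identify the limit with $\Phi(q_{0},\zeta_{0})$. The only cosmetic difference is that you argue directly (extract any subsequential limit $w$ of $v_{k}(0)$ and show it must equal $\Phi(q_{0},\zeta_{0})$) whereas the paper runs the same construction inside a proof by contradiction; both are equivalent, and your initial observation that \textbf{(BA)} guarantees $\mathfrak{A}(q)\ne\varnothing$ is a useful clarification implicit in the paper.
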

\begin{proof}
	Let $q_{k}, q \in \mathcal{Q}$ and $\zeta_{k},\zeta \in \mathbb{E}^{-}$, where $k=1,2,\ldots$, and $q_{k} \to q$, $\zeta_{k} \to \zeta$ as $k \to +\infty$. Let $v_{k}(\cdot), v(\cdot)$ be amenable trajectories at $q_{k}$ and $q$ respectively and such that $\Phi(q_{k},\zeta_{k}) = v_{k}(0)$ and $\Phi(q,\zeta) = v(0)$. Then from \textbf{(H3)} for $l \leq -\tau_{V}$ we have
	\begin{equation}
	\label{EQ: TheoremPhiContH3First}
	V(v_{k}(0)-w_{q_{k}}(0)) - e^{2\nu l} V(v_{k}(l)-w_{q_{k}}(l)) \leq -\delta \int_{l}^{0} e^{2\nu s}|v_{k}(s)-w_{q_{k}}(s)|^{2}_{\mathbb{H}}ds.
	\end{equation}
	If $k$ is fixed, a subsequence $l_{m}$, $m=1,2,\ldots$,  $l_{m} \to -\infty$ as $m \to +\infty$ can be chosen\footnote{Since $v_{k}(\cdot)$ and $w_{q_{k}}(\cdot)$ are amenable, we have $\int_{-\infty}^{0}e^{2\nu s}\| v_{k}(s)-w_{q_{k}}(s) \|^{2}_{\mathbb{E}}ds < +\infty$. To get $l_{m}$ one may apply the mean value formula to the previous integral on $[l-1,l]$ for $l=-1,2,\ldots$.} such that $e^{2\nu l_{m}} V(v_{k}(l_{m})-w_{q_{k}}(l_{m})) \to 0$ as $m \to +\infty$. Putting $l=l_{m}$ in \eqref{EQ: TheoremPhiContH3First} and taking it to the limit as $m \to +\infty$, we get
	\begin{equation}
	\label{EQ: ContIntegalDiffEstimate}
	-\delta^{-1} V(v_{k}(0)-w_{q_{k}}(0)) \geq \int_{-\infty}^{0}e^{2 \nu s} |v_{k}(s)-w_{q_{k}}(s)|^{2}_{\mathbb{H}}ds 
	\end{equation}
	Since $\mathbb{E}^{+}$ and $\mathbb{E}^{-}$ are $V$-orthogonal, we have $-V(v_{k}(0)-w_{q_{k}}(0)) \leq \|\Pi \| \cdot \|\zeta_{k}-\Pi w_{q_{k}}(0)\|^{2}_{\mathbb{E}}$. Using the uniform boundedness of $w_{q_{k}}$ we have
	\begin{equation}
	\begin{split}
	\left(\int_{-\infty}^{0}e^{2\nu s}|v_{k}(s)-w_{q_{k}}(s)|^{2}_{\mathbb{H}}ds \right)^{1/2} \geq \left(\int_{-\infty}^{0}e^{2\nu s}|v_{k}(s)|^{2}_{\mathbb{H}}ds \right)^{1/2} -\\- \left(\int_{-\infty}^{0}e^{2\nu s}|w_{q_{k}}(s)|^{2}_{\mathbb{H}}ds \right)^{1/2} \geq \left(\int_{-\infty}^{0}e^{2\nu s}|v_{k}(s)|^{2}_{\mathbb{H}}ds \right)^{1/2} - \frac{M}{2\nu}.
	\end{split}
	\end{equation}
	From this, \eqref{EQ: ContIntegalDiffEstimate} and \textbf{(S)} there exists a constant $\widetilde{M}$ (independent of $k$) such that
	\begin{equation}
	\label{EQ: ContinuityThBoundedIntegral}
	\widetilde{M} \geq \int_{-\infty}^{0}e^{2\nu s}\| v_{k}(s) \|^{2}_{\mathbb{E}}ds.
	\end{equation}
	Now suppose that $\Phi(q_{k},\zeta_{k})=v_{k}(0)$ does not converge in $\mathbb{E}$ to $v(0)=\Phi(q,\zeta)$. Then for a subsequence (we keep the same index) we have that $|v_{k}(0)-v(0)|_{\mathbb{E}}$ is separated from zero. Let us for every $k$ consider the integral in \eqref{EQ: ContinuityThBoundedIntegral} on segments $[l-1,l]$, where $l=-1,-2,\ldots$. Using the mean value theorem, we get a family of numbers $t^{(l)}_{k} \in [l-1,l]$ such that vectors $v_{k}(t^{(l)}_{k})$ are bounded in $\mathbb{E}$ uniformly in $k$ and the bound depends on $l$. Taking a subsequence, if necessary, we may assume that $t^{(l)}_{k} \to \overline{t}_{l}$ as $k \to +\infty$ for some $\overline{t}_{l} \in [l-1,l]$. For a fixed $l$ let us consider $\mathcal{C}:=\{ \vartheta^{t^{(l)}_{k}}(q_{k}) \ | \ k=1,2,\ldots \}$ and $\mathcal{B}:= \{ \Pi v_{k}(t^{(l)}_{k}) \ | \ k=1,2,\ldots \}$. Clearly, $\mathcal{C}$ is precompact and $\mathcal{B}$ is bounded. Then $v_{k}(t^{(l)}_{k})$ lies in the precompact (in $\mathbb{E}$) set $\Phi(\mathcal{C},\mathcal{B})$. Thus, there exist a converging subsequence. Using Cantor's diagonal argument, we may assume that $v_{k}(t^{(l)}_{k})$ converges in $\mathbb{E}$ to some $\overline{v}_{l} \in \mathbb{E}$ as $k \to +\infty$ for all $l=-1,-2,\ldots$. Let us define $\overline{v}_{l}(t):=\psi^{t-\overline{t}_{l}}(\vartheta^{\overline{t}_{l}}(q), \overline{v}_{l})$ for $t \geq \overline{t}_{l}$. From the continuity of the cocycle we have that $v_{k}(t) \to \overline{v}_{l}(t)$ as $k \to +\infty$ for $t > \overline{t}_{l}$. This implies that $\overline{v}_{l-1}(t)=\overline{v}_{l}(t)$ for all $l$ and all $t > t_{l}$. For any $t \in \mathbb{R}$ define $v^{*}(t):=\overline{v}_{l}(t)$, where $l$ such that $t > t_{l}$. Then $v^{*}(\cdot)$ is a complete trajectory at $q$ and $v_{k}(t) \to v^{*}(t)$ in $\mathbb{E}$ for all $t \in \mathbb{R}$. From \eqref{EQ: ContinuityThBoundedIntegral} it follows that $v^{*}(\cdot)$ is amenable. Since $\Pi v_{k}(0) = \zeta_{k} \to \Pi v^{*}(0)$ and $\zeta_{k} \to \zeta$, we must have $\Pi v^{*}(0) = \zeta$. But since $v(\cdot)$ and $v^{*}(\cdot)$ are both amenable at $q$ and $\Pi \colon \mathfrak{A}(q) \to \mathbb{E}^{-}$ is a homeomorphism, we must have $v(\cdot) \equiv v^{*}(\cdot)$ and, consequently, $v_{k}(0) \to v(0)$ in $\mathbb{E}$. This leads to a contradiction.
\end{proof}
\begin{remark}
	\label{REM: ApplicationsofContTh}
	Let us discuss the conditions of Theorem \ref{TH: PhiContinuityGen}. Assumption \textbf{(BA)} is natural in the following two cases. The first one is when $(\mathcal{Q},\vartheta)$ is a minimal almost periodic (in the sense of Bohr) flow. In this case this condition will be satisfied if there is a bounded in the future semitrajectory at some $q$, i.~e. when the limit dynamics is nontrivial. The second case is when the cocycle is linear (as in Section \ref{SEC: InvariantManifoldsDelay}). Here the zero trajectory at each $q$ satisfies the required property. The assumption of uniform compactness for $\Phi$ is also natural. It will be satisfied if we assume that assumptions \textbf{(UCOM)} and \textbf{(ULIP)} below hold. Thus, this condition is also natural.
\end{remark}

\begin{description}
	\item[\textbf{(UCOM)}] There exists $\tau_{ucom} > 0$ such that for any precompact set $\mathcal{C} \subset \mathcal{Q}$ and any bounded set $\mathcal{B} \subset \mathbb{E}$ the set $\psi^{\tau_{ucom}}(\mathcal{C},\mathcal{B})$ is precompact in $\mathbb{E}$.
\end{description}
\begin{description}
	\item[\textbf{(ULIP)}] There exists $t_{S} \geq 0$ such that for any $T>0$ there is a constant $L_{T}>0$ such that for all $v_{1},v_{2} \in \mathbb{E}$ and $q \in \mathcal{Q}$ we have
	\begin{equation}
	\|\psi^{t}(q,v_{1})-\psi^{t}(q,v_{2})\|_{\mathbb{E}} \leq L_{T}|v_{1}-v_{2} |_{\mathbb{H}} \text{ for } t \in [t_{S},t_{S}+T]
	\end{equation}
	and also
	\begin{equation}
	\|\psi^{t}(q,v_{1})-\psi^{t}(q,v_{2})\|_{\mathbb{E}} \leq C''_{T}\|v_{1}-v_{2} \|_{\mathbb{E}} \text{ for } t \in [0,T].
	\end{equation}
\end{description}

\begin{lemma}
	Under the hypotheses of Theorem \ref{TH: ReductionTheoremGener} suppose in addition that $\mathfrak{A}(q)$ is nonempty for all $q \in \mathcal{Q}$ and let \textbf{(BA)}, \textbf{(UCOM)} and \textbf{(ULIP)} be satisfied. Then $\Phi$ is uniformly compact.
\end{lemma}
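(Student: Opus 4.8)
\emph{Proof proposal.} The plan is to recycle the a~priori estimates from the proof of Theorem~\ref{TH: PhiContinuityGen} to bound amenable trajectories uniformly over $\mathcal{C}\times\mathcal{B}$, and then to turn that boundedness into precompactness by inserting one application of \textbf{(UCOM)} before invoking continuity of the cocycle — the point being that a bounded subset of the infinite-dimensional space $\mathbb{E}$ is not precompact, so the forward map alone does not suffice. Fix a precompact set $\mathcal{C}\subset\mathcal{Q}$ and a bounded set $\mathcal{B}\subset\mathbb{E}^{-}$; we must show $\Phi(\mathcal{C},\mathcal{B})$ is precompact in $\mathbb{E}$. For $q\in\mathcal{C}$, $\zeta\in\mathcal{B}$ let $v_{q,\zeta}(\cdot)$ be the unique amenable trajectory at $q$ with $\Pi v_{q,\zeta}(0)=\zeta$, so that $\Phi(q,\zeta)=v_{q,\zeta}(0)$ (this trajectory exists and is unique by the non-emptiness hypothesis and Theorem~\ref{TH: ReductionTheoremGener}).

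First I would establish a uniform a~priori bound. Arguing exactly as in the derivation of \eqref{EQ: ContIntegalDiffEstimate} — apply \textbf{(H3)} to the pair $v_{q,\zeta}(\cdot)$, $w^{*}_{q}(\cdot)$ (with $w^{*}_{q}$ the past-bounded reference trajectory supplied by \textbf{(BA)}), pick $l_{m}\to-\infty$ along which $e^{2\nu l_{m}}V(v_{q,\zeta}(l_{m})-w^{*}_{q}(l_{m}))\to0$, and pass to the limit — one obtains $-\delta^{-1}V(v_{q,\zeta}(0)-w^{*}_{q}(0))\ge\int_{-\infty}^{0}e^{2\nu s}|v_{q,\zeta}(s)-w^{*}_{q}(s)|^{2}_{\mathbb{H}}\,ds$. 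Using $V$-orthogonality of $\mathbb{E}^{+}$ and $\mathbb{E}^{-}$ together with $\Pi v_{q,\zeta}(0)=\zeta$, the left-hand side is $\le\delta^{-1}\|\Pi\|\,(\sup_{\mathcal{B}}\|\zeta\|_{\mathbb{E}}+\|\Pi\|M_{b})^{2}$, which is finite and independent of $q\in\mathcal{C}$, $\zeta\in\mathcal{B}$. Shifting time by $t_{S}$, upgrading the $\mathbb{H}$-norm of the difference to the $\mathbb{E}$-norm by \textbf{(S)} (which is a standing hypothesis and in any case follows from \textbf{(ULIP)}), and using $\|w^{*}_{q}(\cdot)\|_{\mathbb{E}}\le M_{b}$ once more, I get a constant $\widetilde{M}>0$ depending only on $\sup_{\mathcal{B}}\|\zeta\|_{\mathbb{E}}$, $M_{b}$ and the structural constants $\delta,\nu,\|\Pi\|,C_{S},t_{S}$, such that $\int_{-\infty}^{0}e^{2\nu s}\|v_{q,\zeta}(s)\|^{2}_{\mathbb{E}}\,ds\le\widetilde{M}$ for all $q\in\mathcal{C}$, $\zeta\in\mathcal{B}$.

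Next I would pull the trajectory back to a bounded set at a bounded time. Put $T:=\tau_{ucom}$. Since $s\mapsto e^{2\nu s}\|v_{q,\zeta}(s)\|^{2}_{\mathbb{E}}$ is continuous, the mean value theorem for integrals yields, for each $(q,\zeta)$, a time $t_{*}=t_{*}(q,\zeta)\in[-T-1,-T]$ with $e^{2\nu t_{*}}\|v_{q,\zeta}(t_{*})\|^{2}_{\mathbb{E}}\le\widetilde{M}$, hence $\|v_{q,\zeta}(t_{*})\|_{\mathbb{E}}\le R:=e^{\nu(T+1)}\widetilde{M}^{1/2}$, a radius independent of $(q,\zeta)$. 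By the cocycle identity,
\begin{equation*}
\Phi(q,\zeta)=v_{q,\zeta}(0)=\psi^{-t_{*}-\tau_{ucom}}\!\Big(\vartheta^{t_{*}+\tau_{ucom}}(q),\ \psi^{\tau_{ucom}}\big(\vartheta^{t_{*}}(q),\,v_{q,\zeta}(t_{*})\big)\Big),
\end{equation*}
where $-t_{*}-\tau_{ucom}\in[0,1]$ and $t_{*}+\tau_{ucom}\in[-1,0]$.

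Finally I would assemble the pieces. The base points $\vartheta^{t_{*}}(q)$ lie in $\mathcal{P}_{0}:=\{\vartheta^{s}(q'):s\in[-T-1,-T],\ q'\in\mathcal{C}\}$, which is precompact in $\mathcal{Q}$ (continuous image of the compact set $[-T-1,-T]\times\overline{\mathcal{C}}$ under the flow), and likewise the points $\vartheta^{t_{*}+\tau_{ucom}}(q)$ lie in a precompact set $\mathcal{P}_{1}$; the fibre arguments $v_{q,\zeta}(t_{*})$ lie in the bounded ball $B_{R}\subset\mathbb{E}$. By \textbf{(UCOM)} the set $\mathcal{K}_{1}:=\psi^{\tau_{ucom}}(\mathcal{P}_{0},B_{R})$ is precompact, so $\overline{\mathcal{K}_{1}}$ is compact; then $[0,1]\times\overline{\mathcal{P}_{1}}\times\overline{\mathcal{K}_{1}}$ is compact, and continuity of the map $(t,q',w)\mapsto\psi^{t}(q',w)$ makes its $\psi$-image compact. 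The displayed identity places $\Phi(\mathcal{C},\mathcal{B})$ inside this compact set, so $\Phi(\mathcal{C},\mathcal{B})$ is precompact and $\Phi$ is uniformly compact. The step I expect to require the most care is the uniform a~priori bound: the estimate from Theorem~\ref{TH: PhiContinuityGen} must be made uniform over all of $\mathcal{C}\times\mathcal{B}$, which is exactly what \textbf{(BA)} (a reference trajectory bounded \emph{uniformly} in $q$) and \textbf{(S)} are for; the remaining bookkeeping is routine once one has recognized that \textbf{(UCOM)} must be applied at a past time at least $\tau_{ucom}$ before continuity can be used.
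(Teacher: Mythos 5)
Your proposal is correct and relies on exactly the same ingredients as the paper's proof: the uniform integral bound obtained from \textbf{(H3)} together with \textbf{(BA)} and the smoothing hypothesis, a mean-value argument to land in a bounded $\mathbb{E}$-ball at a past time, and one application of \textbf{(UCOM)}. The only organizational difference is that the paper first transports the bound from the (variable) mean-value time to the \emph{fixed} time $-\tau_{ucom}$ using the first inequality of \textbf{(ULIP)}, so that a single $\psi^{\tau_{ucom}}$ lands exactly at time $0$, whereas you apply \textbf{(UCOM)} directly from the variable time $t_{*}$ and then absorb the residual jump of length $-t_{*}-\tau_{ucom}\in[0,1]$ by joint continuity of $(t,q,w)\mapsto\psi^{t}(q,w)$ on a compact set; both routes are valid, and yours avoids having to choose the mean-value window so that the transport time is $\ge t_{S}$.
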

\begin{proof}
Consider a precompact set $\mathcal{C} \subset \mathcal{Q}$ and a bounded set $\mathcal{B} \subset \mathbb{E}$. Consider any sequence $v_{0,k} \in \Phi(\mathcal{C}\times \mathcal{B})$, where $k=1,2,\ldots$, for which we shall find a converging subsequence. Then $v_{0,k} = \Phi(q_{k}, \zeta_{k})$ for some $q_{k} \in \mathcal{C}$, $\zeta_{k} \in \mathcal{B}$ and there are amenable trajectories $v^{*}_{k}(\cdot)$ at $q_{k}$ such that $v^{*}_{k}(0)=v_{0,k}$. We may assume that $q_{k}$ converges to some $q$ as $k \to +\infty$. Similarly to \eqref{EQ: ContIntegalDiffEstimate} from \textbf{(ULIP)} for some constant $\widetilde{M}>0$ we get
\begin{equation}
	\begin{split}
		\widetilde{M} &\geq \int_{-\infty}^{0} e^{2\nu s}|v^{*}_{k}(s)-w^{*}_{q_{k}}(s)|^{2}_{\mathbb{H}}ds \geq \int_{-\tau-1}^{-\tau}e^{2\nu s}|v^{*}_{k}(s)-w^{*}_{q_{k}}(s)|^{2}_{\mathbb{H}}ds =\\&= e^{2\nu s_{k}}|v^{*}_{k}(s_{k})-w^{*}_{q_{k}}(s_{k})|^{2}_{\mathbb{H}} \geq e^{-2\nu (\tau+1)} (C'_{\tau+1})^{-2} \| v^{*}_{k}(-\tau_{ucom})-w^{*}_{q_{k}}(-\tau_{ucom}) \|^{2}_{\mathbb{E}},
	\end{split}
\end{equation}
where $\tau = \operatorname{max}\{\tau_{S}, \tau_{ucom} \}$ and $s_{k} \in [-\tau-1,-\tau]$ are some numbers. From this and \textbf{(BA)} we get that $v^{*}_{k}(-\tau_{ucom})$ lies in some bounded set $\mathcal{B}'$ and, consequently, $v^{*}_{k}(0) \in \psi^{\tau_{ucom}}(\mathcal{C}',\mathcal{B}')$, where $\mathcal{C}'= \{ \vartheta^{-\tau_{ucom}}(q_{k}) \ | \ k=1,2,\ldots \}$. Then by \textbf{(UCOM)} one can find a converging subsequence of $v^{*}_{k}(0)=v_{0,k}$. The lemma is proved.
\end{proof}

\begin{lemma}
	\label{LEM: UniformLipschitProperty}
	Under the hypotheses of Theorem \ref{TH: ReductionTheoremGener} suppose in addition that \textbf{(ULIP)} is satisfied. Let $\mathfrak{A}(q)$ be nonempty for some $q \in \mathcal{Q}$. Then $\Phi_{q} \colon \mathbb{E}^{-} \to \mathbb{E}$ is Lipschitz and the Lipschitz constants are bounded uniformly in $q \in \mathcal{Q}$. In particular, $\mathfrak{A}(q)$ is a Lipschitz submanifold in $\mathbb{E}$.
\end{lemma}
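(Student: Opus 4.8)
The plan is to run, for a \emph{fixed} base point $q$, the same chain of estimates that opens the proof of Theorem~\ref{TH: PhiContinuityGen}, but comparing two amenable trajectories at $q$ rather than one trajectory with the reference trajectory $w^{*}_{q}$. First I fix $q \in \mathcal{Q}$ with $\mathfrak{A}(q) \neq \emptyset$, pick $\zeta_{1}, \zeta_{2} \in \mathbb{E}^{-}$, and let $v_{1}(\cdot), v_{2}(\cdot)$ be the amenable trajectories at $q$ with $\Pi v_{i}(0) = \zeta_{i}$, so that $\Phi_{q}(\zeta_{i}) = v_{i}(0)$. Applying \textbf{(H3)} on $[l,0]$ (with $r = 0$ and $l \leq -\tau_{V}$), choosing — by amenability and \textbf{(S)}/\textbf{(ULIP)}, exactly as in the footnote to Theorem~\ref{TH: PhiContinuityGen} — a sequence $l_{m} \to -\infty$ along which $e^{2\nu l_{m}} V(v_{1}(l_{m}) - v_{2}(l_{m})) \to 0$, and letting $m \to \infty$, I obtain
\begin{equation}
-\delta^{-1} V(v_{1}(0) - v_{2}(0)) \geq \int_{-\infty}^{0} e^{2\nu s} |v_{1}(s) - v_{2}(s)|^{2}_{\mathbb{H}}\, ds .
\end{equation}
Since $\mathbb{E}^{+}$ and $\mathbb{E}^{-}$ are $V$-orthogonal, $-V(w) \leq -V(\Pi w) \leq \|P\|\,\|\Pi w\|^{2}_{\mathbb{E}}$ for every $w \in \mathbb{E}$, and $\Pi(v_{1}(0) - v_{2}(0)) = \zeta_{1} - \zeta_{2}$, so the left-hand side is at most $\delta^{-1}\|P\|\,\|\zeta_{1} - \zeta_{2}\|^{2}_{\mathbb{E}}$.

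Next I would localize. Restricting the integral to $[-t_{S}-2, -t_{S}-1]$ and applying the mean value theorem for integrals gives a point $t_{0} \in [-t_{S}-2, -t_{S}-1]$ with $e^{2\nu t_{0}} |v_{1}(t_{0}) - v_{2}(t_{0})|^{2}_{\mathbb{H}} \leq \delta^{-1}\|P\|\,\|\zeta_{1} - \zeta_{2}\|^{2}_{\mathbb{E}}$, hence $|v_{1}(t_{0}) - v_{2}(t_{0})|_{\mathbb{H}} \leq e^{\nu(t_{S}+2)}(\delta^{-1}\|P\|)^{1/2}\,\|\zeta_{1} - \zeta_{2}\|_{\mathbb{E}}$. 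Then I upgrade from the $\mathbb{H}$-norm at time $t_{0}$ to the $\mathbb{E}$-norm at time $0$ by the smoothing estimate \textbf{(ULIP)}: writing $v_{i}(0) = \psi^{-t_{0}}(\vartheta^{t_{0}}(q), v_{i}(t_{0}))$ with $-t_{0} \in [t_{S}+1, t_{S}+2] \subset [t_{S}, t_{S}+2]$, the first inequality of \textbf{(ULIP)} with $T = 2$ yields
\begin{equation}
\|\Phi_{q}(\zeta_{1}) - \Phi_{q}(\zeta_{2})\|_{\mathbb{E}} = \|v_{1}(0) - v_{2}(0)\|_{\mathbb{E}} \leq C'_{2}\, |v_{1}(t_{0}) - v_{2}(t_{0})|_{\mathbb{H}} \leq C'_{2}\, e^{\nu(t_{S}+2)}(\delta^{-1}\|P\|)^{1/2}\,\|\zeta_{1} - \zeta_{2}\|_{\mathbb{E}} .
\end{equation}
Since the final constant involves only $C'_{2}$, $t_{S}$, $\delta$, $\|P\|$ and $\nu$ — none of them on $q$ — this is the claimed uniform Lipschitz bound.

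The closing assertion is then immediate: $\Phi_{q} \colon \mathbb{E}^{-} \to \mathfrak{A}(q)$ is a bijection by Theorem~\ref{TH: ReductionTheoremGener}, with left inverse $\Pi_{q} = \restr{\Pi}{\mathfrak{A}(q)}$, which is Lipschitz because $\Pi \in \mathcal{L}(\mathbb{E};\mathbb{E})$; combined with the bound just proved, $\Phi_{q}$ is a bi-Lipschitz homeomorphism onto $\mathfrak{A}(q)$. Equivalently, $\mathfrak{A}(q)$ is the graph over $\mathbb{E}^{-}$ of the Lipschitz map $\zeta \mapsto (I - \Pi)\Phi_{q}(\zeta)$, hence a Lipschitz submanifold of $\mathbb{E}$. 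I expect the only delicate point to be the vanishing of the boundary term $e^{2\nu l_{m}}V(v_{1}(l_{m})-v_{2}(l_{m}))$ as $l_{m} \to -\infty$; this is handled precisely as in Theorem~\ref{TH: PhiContinuityGen} using amenability of \emph{both} trajectories together with \textbf{(S)}/\textbf{(ULIP)} (which provide the $\mathbb{E}$-version of the amenability integral). Everything else is a routine chain of inequalities, so no substantial obstacle is anticipated.
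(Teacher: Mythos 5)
Your proof is correct and follows essentially the same route as the paper's: apply \textbf{(H3)} to the difference of two amenable trajectories at $q$, let the boundary term vanish by amenability, use $V$-orthogonality and $\|P\|$ to bound $-V$ of the difference in terms of $\|\zeta_1-\zeta_2\|^2_{\mathbb{E}}$, localize the resulting weighted $L^2$ integral by the mean value theorem on an interval of length one placed past $-t_S$, and then upgrade to the $\mathbb{E}$-norm at time $0$ via the smoothing inequality of \textbf{(ULIP)}. The only cosmetic difference is the interval (the paper uses $[-\tau_S-1,-\tau_S]$, giving $C'_{\tau_S+1}e^{\nu(\tau_S+1)}\sqrt{\delta^{-1}\|P\|}$ for the Lipschitz constant, versus your $[-t_S-2,-t_S-1]$), and you spell out the vanishing of the boundary term and the graph/bi-Lipschitz conclusion, both of which the paper leaves implicit by referencing the analogous estimate in Theorem~\ref{TH: PhiContinuityGen}.
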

\begin{proof}
	Similarly to \eqref{EQ: ContIntegalDiffEstimate} for any two amenable trajectories $v^{*}_{1}(\cdot)$ and $v^{*}_{2}(\cdot)$ at $q$ we have
	\begin{equation}
		\begin{split}
			\delta^{-1} \|P\| \cdot \| \Pi v^{*}_{1}(0) - \Pi v^{*}_{2}(0) \|^{2}_{\mathbb{E}} \geq \int_{-\infty}^{0} e^{2\nu s} | v^{*}_{1}(s)-v^{*}_{2}(s) |^{2}_{\mathbb{H}} ds \geq \\ \geq \int_{-\tau_{S}-1}^{-\tau_{S}}e^{2\nu s} | v^{*}_{1}(s)-v^{*}_{2}(s) |^{2}_{\mathbb{H}} = e^{2\nu s_{0}} | v^{*}_{1}(s_{0})-v^{*}_{2}(s_{0}) |^{2}_{\mathbb{H}}
		\end{split}	
	\end{equation}
	for some $s_{0} \in [-\tau_{S}-1,-\tau_{S}]$. From \textbf{(ULIP)} we have
	\begin{equation}
		\| v^{*}_{1}(0)-v^{*}_{2}(0) \|_{\mathbb{E}} \leq C'_{\tau_{S}+1} | v^{*}_{1}(s_{0})-v^{*}_{2}(s_{0}) |_{\mathbb{H}}.
	\end{equation}
	Now if $\Pi v^{*}_{i}(0) = \zeta_{i}$ for $i=1,2$, we get
	\begin{equation}
		\label{EQ: PhiQLipschitzConstant}
		\begin{split}
			\| \Phi_{q}(\zeta_{1})-\Phi_{q}(\zeta_{2}) \|_{\mathbb{E}} = \| v^{*}_{1}(0)-v^{*}_{2}(0) \|_{\mathbb{E}} \leq C'_{\tau_{S}+1} e^{\nu(\tau_{S}+1)} \sqrt{\delta^{-1} \| P \|} \cdot \| \zeta_{1}-\zeta_{2} \|_{\mathbb{E}}.
		\end{split}	
	\end{equation}
	Thus, we may put $C_{\Phi} := C'_{\tau_{S}+1} e^{\nu(\tau_{S}+1)} \sqrt{\delta^{-1} \|P\|}$ and this shows the required statement.
\end{proof}

\bibliographystyle{amsplain}

\end{document}